\def\I#1{{[\![#1]\!]}}            
\def\ord{order}                 
\newcommand{\SEQ}[4]{{\langle #1 ; #2 : #3 ; #4 \rangle}}
\newcommand{\cln}[2]{ \left\lceil \frac{#1}{#2} \right\rceil}
\newcommand{\flr}[2]{ \left\lfloor \frac{#1}{#2} \right\rfloor}
\newcommand{\rem}[1]{#1^{(r)}}
\newcommand{\quo}[1]{#1^{(q)}}
\newcommand{\cell}[2]{\begin{tabular}{|l|l|} \hline #1 & #2\\
 \hline \end{tabular}}
\newcommand{\set}[1]{\left\{ #1 \right\}}
\newcommand{\lref}[1]{Lemma \ref{lem:#1}}
\newcommand{\cref}[1]{Corollary \ref{cor:#1}}
\newcommand{\eref}[1]{Equation (\ref{eq:#1})}
\newcommand{\sref}[1]{Section \ref{sec:#1}}
\newcommand{\tref}[1]{Theorem \ref{thm:#1}}
\newcommand{\tabref}[1]{Table \ref{tab:#1}}
\newcommand{\fref}[1]{Figure \ref{fig:#1}}
\numberwithin{equation}{section}
\newtheorem{theorem}{Theorem}[section]
\newtheorem{corollary}{Corollary}[section]
\newtheorem{lemma}{Lemma}[section]
\newtheorem{conjecture}{Conjecture}[section]
\title[Nested Recurrence Relations With Conolly-Like Solutions]
{Nested Recurrence Relations With Conolly-Like Solutions}
\author[Erickson]{Alejandro Erickson}
\address{Dept. of Computer Science, University of Victoria, CANADA}
\author[Isgur]{Abraham Isgur}
\address{Dept. of Mathematics, University of Toronto, CANADA}
\author[Jackson]{Bradley W. Jackson}
\address{Dept. of Computer Science, San Jose State University, USA}
\author[Ruskey]{Frank Ruskey*}
\address{Dept. of Computer Science, University of Victoria, CANADA}
\thanks{*Research supported in part by NSERC}
\urladdr{http://www.cs.uvic.ca/~ruskey}
\author[Tanny]{Stephen M. Tanny}
\address{Dept. of Mathematics, University of Toronto, CANADA}
\begin{document}
\normalem

\begin{abstract}
  A nondecreasing sequence of positive integers is
  \emph{$(\alpha,\beta)$-Conolly}, or \emph{Conolly-like} for short,
  if for every positive integer $m$ the number of times that $m$
  occurs in the sequence is $\alpha + \beta r_m$, where $r_m$ is $1$
  plus the 2-adic valuation of $m$.  A recurrence relation is
  $(\alpha, \beta)$-Conolly if it has an $(\alpha, \beta)$-Conolly
  solution sequence.  We discover that Conolly-like sequences often
  appear as solutions to nested (or meta-Fibonacci) recurrence
  relations of the form $A(n) = \sum_{i=1}^k A(n-s_i-\sum_{j=1}^{p_i}
  A(n-a_{ij}))$ with appropriate initial conditions.  For any fixed
  integers $k$ and $p_1,p_2,\ldots, p_k$ we prove that there are only
  finitely many pairs $(\alpha, \beta)$ for which $A(n)$ can be
  $(\alpha, \beta)$-Conolly. For the case where $\alpha =0$ and $\beta
  =1$, we provide a bijective proof using labelled infinite
  trees to show that, in addition to the original Conolly recurrence,
  the recurrence $H(n)=H(n-H(n-2)) + H(n-3-H(n-5))$ also has the
  Conolly sequence as a solution. When $k=2$ and $p_1=p_2$, we
  construct an example of an $(\alpha,\beta)$-Conolly recursion for
  every possible ($\alpha,\beta)$ pair, thereby providing the first
  examples of nested recursions with
  $p_i>1$ whose solutions are completely understood.  Finally, in the case where $k=2$ and $p_1=p_2$, we provide an
  if and only if condition for a given nested recurrence $A(n)$ to be
  $(\alpha,0)$-Conolly by proving a very general ceiling function identity.
\end{abstract}

\keywords{Self-referencing recursion, nested recursion, slowly growing
  sequence, Conolly-like, ruler function, meta-Fibonacci, bijective
  proof, infinite trees, ceiling function identity}

\maketitle

\section{Introduction}

In this paper we analyze recurrence relations of the form
\begin{align}
R(n) = \sum_{i=1}^k R\left(n-s_i-\sum_{j=1}^{p_i} R(n-a_{ij})\right)
\label{eq:generalRecurrence}
\end{align}
where the parameters $k$, $s_i$, $p_i$ and $a_{ij}$ are positive
integer constants (with the exception of $s_i$, which can be equal to
$0$).  We assume $c$ initial conditions $R(1) = \xi_1,R(2)=\xi_2,
\ldots, R(c)=\xi_c$, where unless otherwise indicated, $\xi_i> 0$.
\eref{generalRecurrence}~has a well-defined solution sequence as long
as for each $i$ in $\{1,2,\ldots,k\}$, the argument
$n-s_i-\sum_{j=1}^{p_i} R(n-a_{ij})$ is positive.

We use the following notation for \eref{generalRecurrence} and
optionally its initial conditions,
\begin{align}
\label{eq:generalNotation}
\langle s_1;a_{11},a_{12},\ldots,a_{1p_1} : s_2;a_{21},a_{22},\ldots,a_{2p_2} :
  \cdots :s_k;a_{k1},a_{k2},\ldots,a_{kp_k} \rangle [\xi_1,\xi_2,\ldots,\xi_c ].
\end{align}
For convenience we use this notation to refer to both the recursion
and its solution sequence even in the situation where we are uncertain
that a solution exists.  For example Hofstadter's $Q$-sequence
(\cite{Hofstadter}), defined by $Q(1) = Q(2) = 1$ and $Q(n) =
Q(n-Q(n-1)) + Q(n-Q(n-2))$ for $n >2$ is written $Q =
\SEQ{0}{1}{0}{2}[1,1]$; $Q$ is a famous example where it is not
known whether or not a solution exists.

We classify recurrences of the form ($\ref{eq:generalRecurrence}$)
according to the values of the parameters $p_i$ and $k$. We say the
above recursion has \emph{order} $\textbf{p} = (p_1,p_2,...,p_k)$, and for convenience we say it has order $p$ if $p_1 = p_2 = \cdots = p_k =p$.
We refer to $k$ as the \emph{arity} of a recursion. The case $k=1$
is uninteresting and yields only easily-understood periodic sequences (see \cite{Golomb1990} for an outline
of the required argument in a special case), so we only consider
recursions of arity at least 2.

%
%
%
%

To date, recursions of the form ($\ref{eq:generalRecurrence}$) have been examined only for $p=1$ (see, for example, \cite{AllenbySmith, CCT, HiTan, JacksonRuskey, Stoll, Tanny92}). As with nested recursions of order 1, no general solution methodology is available for general order $p$. Therefore a natural starting point for our analysis is the gathering and organization of experimental data
in a variety of situations.

Our empirical investigations focused on $k=2$ and $p \geq 1$, ie, recursions of the form

\begin{align}
  R(n) = R( n-s-R(n-a_1)\cdots-R(n-a_p) ) + R(
  n-t-R(n-b_1)\cdots-R(n-b_p)).
\label{eq:general2}
\end{align}

When it is convenient, we assume without loss of generality that $0 \le s \le t$
and $1 \le a_1 \le\cdots \le a_p$, and $1 \le b_1 \le \cdots \le
b_p$. Using the notation introduced in (\ref{eq:generalNotation}), we
write $R = \SEQ{s}{a_1,\ldots,a_p}{t}{b_1,\ldots,b_p}$.

Our empirical work indicated that many
combinations of parameters and initial conditions yield a solution
sequence reminiscent of the well-known Conolly sequence
\cite{ConVa}. This sequence is the solution to the \emph{nested} (also called
self-referencing or meta-Fibonacci) recurrence relation $C(n) =
\SEQ{0}{1}{1}{2}[1,2]$.  We say it is \emph{slowly growing} or \emph{slow} because it
has the property that successive differences are either 0 or 1 and it tends to infinity.  Any
slowly growing sequence can be described by its \emph{frequency
  sequence}, $\phi_C(m)$, that counts the number of times that $m$
occurs in $C(n)$. It is shown in \cite{JacksonRuskey} that $\phi_C(m)$
equals $r_m$, the so-called ``ruler function''. The ruler function
$r_m$ is defined as one plus the 2-adic valuation of $m$ (the exponent
of 2 in the prime factorization of $m$).  Thus $r_m =
1,2,1,3,1,2,1,4,1,2,1,3,1,2,1,5,\ldots$.

We observed that the aforementioned solution sequences related to the Conolly sequence have frequency sequences of the form $\alpha +\beta r_m$, for integers $\alpha$ and $\beta$; we call such sequences \emph{Conolly-like}. When we want to specify $\alpha$ and $\beta$, we say the sequence is the \emph{$(\alpha,\beta)$-Conolly} sequence. Correspondingly, we say that a recurrence relation is Conolly-like if it has a Conolly-like solution sequence and is \emph{$(\alpha,\beta)$-Conolly} when the solution is $(\alpha,\beta)$-Conolly.\footnote{Note that a recurrence is Conolly-like if there is at least one set of initial conditions for which its solution is Conolly-like; in general, for an arbitrary set of initial conditions, the solution, if it exists, will not be Conolly-like.} In this paper, we exclude from consideration the special case where $-\alpha = \beta > 0$, as the resulting sequence would not be slow.

Because of their close connection to the ruler function, Conolly-like sequences have many useful properties. In particular, following the approach in \cite{JacksonRuskey, FRusCDeg} it is straightforward to show that $\frac{z}{1-z} \prod_{n \ge 0} \left( 1+z^{2^n\alpha+(2^{n+1}-1)\beta} \right)$ is the generating function for the general $(\alpha,\beta)$-Conolly sequence\footnote{The central idea of this technique is to find a recursive description of the successive differences
  of the $(\alpha,\beta)$-Conolly sequence (these differences are either 0 or 1 because it is
  slowly growing), and then find the generating function of these differences.
}. Thus, proving that a given recursion is $(\alpha,\beta)$-Conolly also determines the generating function for its Conolly-like solution.

In the balance of this paper we focus on investigating conditions for the existence of slow Conlly-like solutions to recursions of the form ($\ref{eq:general2}$). To do so we
develop a proof technique for general $p$ that adapts the tree-based
combinatorial interpretations invented in \cite{JacksonRuskey,
  Rpaper, FRusCDeg} for the case $p=1$. Our idea is to build upon these
combinatorial interpretations, which identify an infinite tree associated with the Conolly
recursion and the recursion $\SEQ{0}{1}{2}{3}[1,1,2]$, respectively. These order
1 recursions have solutions with frequency sequences $r_m$ and 2,
respectively. For higher order Conolly-like recurrences our rough idea
is to create new trees as ``linear combinations" of the original two
trees.

Before we can apply the above approach effectively we must narrow the range of possible recursions of the form (\ref{eq:general2}) that are candidates for having Conolly-like solutions. We do so in \sref{Exp}~ where we explore asymptotic properties of any solution to the more general recurrence (\ref{eq:generalRecurrence}) ( see Theorems \ref{thm:ordlimit} and \ref{thm:NonSlowWeave}). By comparing these properties with the known asymptotic properties of an $(\alpha,\beta)$-Conolly sequence, we identify, for any given order $p$,
necessary restrictions on the parameters $\alpha$ and $\beta$ for any ($\alpha,\beta$)-Conolly solution to recursion ($\ref{eq:general2}$) (see Corollary \ref{cor:abConditions}). These findings provide important guidance for the
empirical search procedures that we adopt and motivate many of the results in the remainder of the paper.

Since our focus is on Conolly-like sequences, it
is natural to ask if the Conolly recursion
$\SEQ{0}{1}{1}{2}[1,2]$ is the only recursion of the form (\ref{eq:general2}) whose solution is the usual Conolly sequence (which of course is (0,1)-Conolly). From \sref{Exp}~ (see Table \ref{tbl:Pairs}) we know that any such recursion must be order 1; hence our focus on order 1 recursions in \sref{Order1}. There we use the tree technique outlined above to prove that $\SEQ{0}{2}{3}{5}[1,2,2,3,4]$ is another order 1 recursion whose solution is the Conolly sequence. Based on
our experimental results we conjecture that there are no other 2-ary
order 1  $(0,1)$-Conolly recursions. We conclude this section by describing completely all 2-ary order 1  $(2, 0)$-Conolly recursions (see Corollary \ref{cor:ceiling1}), which we know (again from Table \ref{tbl:Pairs} in \sref{Exp}) are the only other possibility for Conolly-like solutions to 2-ary recursions of order 1. This gives us a near-complete description of Conolly-like solutions for (\ref{eq:general2}) with $p=1$.

From Sections \ref{sec:Exp} and \ref{sec:Order1} we have necessary and
sufficient conditions for the existence of a $2$-ary, order $1$
$(\alpha,\beta)$-Conolly recursion. In \sref{AnyOrder} we extend this result to recursions of all orders (see Theorem \ref{thm:alphabeta}). To do so we apply a similar tree technique, given any $p,\alpha,\beta$ satisfying the necessary condition in \sref{Exp}, to construct an order $p$ ($\alpha,\beta$)-Conolly
recursion of the form ($\ref{eq:general2}$). This proves that
these conditions are also sufficient. We conclude this section by showing how solutions to low-order recursions can be used to identify solutions to recursions of higher order (see Theorems \ref{thm:interleave} and \ref{thm:perturb}).

In Section \ref{sec:AnyOrder} on recursions with $p>1$ we focused on finding examples of $(\alpha, \beta)$-Conolly recursions. In \sref{Ceiling} we address for $p>1$ the same question we (mostly) answered in Section \ref{sec:Order1} for $p=1$: for a given $(\alpha,\beta)$, what is the
complete list of $(\alpha, \beta)$-Conolly recursions of the form (\ref{eq:general2})? For $\beta >0$ we believe that for each possible $(\alpha, \beta)$ pair there are finitely many (and at least one) 2-ary $(\alpha, \beta)$-Conolly recursions; as an example we illustrate what we conjecture is a complete list of Conolly-like recursions that we determined empirically in the case $p=2$. For $\beta=0$ and arbitrary order $p$ we provide a complete answer (Theorem \ref{thm:ceiling}).

In \sref{Conc} we conclude with a discussion of possible directions for future work.

\section{Narrowing the Search for Conolly-Like Recursions} \label{sec:Exp}

Equation (\ref{eq:general2}) describes a very large class of recursions. Furthermore, it is not clear for which, if any, of the infinitely many pairs $(\alpha, \beta)$ a given recursion might be $(\alpha, \beta)$-Conolly. However, specifying that a sequence is  $(\alpha, \beta)$-Conolly implies very strong conditions on its asymptotic behavior determined by $\alpha$ and $\beta$. At the same time any sequence that solves (\ref{eq:general2}) must have asymptotic properties that depend upon the value of $p$. Therefore a Conolly-like solution to (\ref{eq:general2}) must satisfy both sets of requirements, necessitating a relationship between $p$ and $(\alpha, \beta)$. In what follows we derive this relationship; in doing so, because it is no more difficult, we establish a more general result that applies to all recursions (\ref{eq:generalRecurrence}).

The first step is to determine, given a limit $L$, what parameters for
($\ref{eq:generalRecurrence}$) can lead to a solution $A(n)$
such that the ratio $A(n)/n$ converges to $L$.

\begin{theorem}
  Let $A(n)$ be the solution sequence of a nested recursion of
  the form ($\ref{eq:generalRecurrence}$). Suppose that the ratio $A(n)/n$ has a nonzero limit $L$. Then $L
  = \frac{k-1}{\sum_{i=1}^{k}p_i}$. If $p_i = p$ for all $i$ and $k=2$, which is
  the focus of our attention in this paper, then $L = \frac{1}{2p}$.
\label{thm:ordlimit}
\end{theorem}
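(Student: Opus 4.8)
The plan is to extract the leading-order behavior of both sides of ($\ref{eq:generalRecurrence}$) under the hypothesis $A(n)/n \to L$ with $L \neq 0$. First I would record the immediate consequence that, since $A$ takes positive integer values and $A(n) \sim Ln$, we have $A(n) \to \infty$; in particular each argument $n - s_i - \sum_{j=1}^{p_i} A(n-a_{ij})$ is eventually of the form $n(1 - p_i L) + o(n)$, because $A(n - a_{ij}) = L(n - a_{ij}) + o(n) = Ln + o(n)$ for each fixed $a_{ij}$. For the recursion to have a well-defined solution with these arguments tending to $+\infty$ (which is forced, since $A$ is evaluated there and $A \to \infty$ along the whole sequence), we need $1 - p_i L \geq 0$ for each $i$; I would note this and carry it along.

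The heart of the argument is then a substitution-and-divide step. Applying $A$ to the $i$-th argument and using $A(n)/n \to L$ once more,
\begin{align*}
A\!\left(n - s_i - \sum_{j=1}^{p_i} A(n-a_{ij})\right) &= L\left(n - s_i - \sum_{j=1}^{p_i} A(n-a_{ij})\right) + o(n) \\
&= L\left(n - p_i L n\right) + o(n) = L(1 - p_i L)\, n + o(n).
\end{align*}
Summing over $i = 1, \ldots, k$ and equating with $A(n) = Ln + o(n)$ gives
\begin{align*}
Ln + o(n) = \sum_{i=1}^{k} L(1 - p_i L)\, n + o(n) = \left(kL - L^2 \sum_{i=1}^{k} p_i\right) n + o(n).
\end{align*}
Dividing by $n$ and letting $n \to \infty$ yields $L = kL - L^2 \sum_{i=1}^k p_i$; since $L \neq 0$ we may cancel one factor of $L$ to obtain $1 = k - L \sum_{i=1}^k p_i$, i.e. $L = \frac{k-1}{\sum_{i=1}^k p_i}$. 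Specializing to $k = 2$ and $p_i = p$ for all $i$ gives $L = \frac{1}{2p}$, as claimed.

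The main obstacle, and the one place the sketch above is slightly too glib, is justifying that $A$ applied to the inner argument really is $\sim L \cdot (\text{argument})$. The relation $A(n)/n \to L$ gives control of $A(m)/m$ only for $m$ ranging over \emph{all} large integers, so I need to know the inner arguments $m_i(n) := n - s_i - \sum_j A(n-a_{ij})$ genuinely tend to $+\infty$ (rather than merely being positive and bounded, or oscillating), so that $A(m_i(n)) = L\, m_i(n) + o(m_i(n)) = L\, m_i(n) + o(n)$ is legitimate. If some $1 - p_iL = 0$ the argument $m_i(n) = o(n)$ but could still be bounded; however $A$ of a bounded argument is bounded, contributing $O(1)$ to the right side, which is absorbed into $o(n)$, and the computation goes through with that term's contribution $L(1-p_iL)n = 0$ recorded as $0$. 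The case $1 - p_i L < 0$ is excluded because then $m_i(n) \to -\infty$ and the recursion is not well-defined (the solution sequence does not exist as assumed). So the careful write-up splits on the sign of $1 - p_i L$, handles the degenerate $=0$ case by the $O(1)$ remark, and otherwise runs the displayed asymptotic chain; the uniformity needed is exactly the statement $A(m)/m \to L$, which is the hypothesis.
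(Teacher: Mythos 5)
Your proposal is correct and follows essentially the same route as the paper: rule out $p_iL>1$ because the corresponding argument $n-s_i-\sum_j A(n-a_{ij})$ would eventually be negative, show that any summand with $p_iL=1$ contributes only $o(n)$, and then pass to the limit in the recursion to obtain $L=\sum_{i=1}^k(1-p_iL)L$ and cancel $L$. One small tightening for the degenerate case $p_iL=1$: your dichotomy ``bounded versus tending to $+\infty$'' is not exhaustive (the argument $m_i(n)=o(n)$ could oscillate while being unbounded), so instead invoke the global bound $A(m)\le\lambda m$, which holds for some constant $\lambda$ because $A(m)/m$ converges, giving $A(m_i(n))\le\lambda\,m_i(n)=o(n)$ in all cases --- this is precisely how the paper disposes of that term.
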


Note that the hypothesis $L >0$ in Theorem
$\ref{thm:ordlimit}$ rules out the trivial zero sequence solution.

\begin{proof}
  Let $A(n)$ satisfy the recursion $A(n) = \sum_{i=1}^k
  A\left(n-s_i-\sum_{j=1}^{p_i} A(n-a_{ij})\right)$. As a first step, we will show that we cannot have $p_iL > 1$ for any $i$. Assume to the
  contrary that for some $h$, and for some $\epsilon > 0$, $p_hL>1+\epsilon$. Then find some $N$ such that for all $j$ and for all $n>N$, $A(n-a_{hj}) > (1+\epsilon/2)(n-a_{hj})/p_h > n/p_h + n\frac{\epsilon}{2p_h} -2a_{hj}/p_h$. Then for $n>N$ and $n>\max\{\frac{4a_{hj}}{\epsilon}\}$, $\sum_{j=1}^{p_h} A(n-a_{hj}) > n \geq n-s_h$. But then the argument
  of the $h^{th}$ summand is negative, which is impossible since
  $A(n)$ is a solution. Thus, for all $i$, $p_iL \leq 1$. We will later be able to show that, in fact, $p_iL < 1$ for all $i$.

  Suppose there exists at least one value of $i$ for which $p_iL =
  1$. Without loss of generality let $p_1L=1$. Since $p_1 \ge 1$, we have  $\frac{A(n)}{n} \rightarrow L \leq 1$, so there exists some positive constant $\lambda$ such that for all $n$, $A(n) \leq \lambda n$. Then for all $n$,  $A(n-s_1-\sum_{j=1}^{p_1}A(n-a_{1j})) \leq \lambda (n-s_1-\sum_{j=1}^{p_1}A(n-a_{1j}))$. It follows that $\frac{A(n-s_1-\sum_{j=1}^{p_1}A(n-a_{1j}))}{n} \leq \lambda - \lambda s_1/n-\lambda \sum_{j=1}^{p_1}\frac{A(n-a_{1j})}{n}$. But $\frac{n}{n-a_{1j}} \rightarrow 1$, so $\frac{A(n-a_{1j})}{n} \rightarrow L$, from which we deduce that $\lim_{n \rightarrow \infty} \frac{A(n-s_1-\sum_{j=1}^{p_1}A(n-a_{1j}))}{n} \leq \lambda -\lambda \sum_{j=1}^{p_1}L = \lambda - \lambda p_1 L = 0$.

  Now, write
  \begin{align*}
    \frac{A(n)}{n} =
  \frac{A(n-s_1-\sum_{j=1}^{p_1}A(n-a_{1j}))}{n} + \frac{\sum_{i=2}^k
    A\left(n-s_i-\sum_{j=1}^{p_i} A(n-a_{ij})\right)}{n}
  \end{align*}
  and take the limit as $n \rightarrow \infty$ on both sides. The
  first term on the right vanishes. This argument shows that in
  evaluating $L$ we can ignore any summands with index $i$ such that
  $p_iL = 1$. Further, it confirms that we cannot have $p_iL = 1$ for
  all $i$, since then $L=0$, which contradicts our assumption.

  Combining the above results we may safely assume that $p_iL <1$ for
  all $i$. For each $i$ define $\kappa_i(n) := n-s_i-\sum_{j=1}^{p_i}
  A(n-a_{ij})$. Then $\lim_{n \rightarrow \infty}
  \frac{\kappa_i(n)}{n} = 1-p_iL > 0$, and thus $\lim_{n
    \rightarrow \infty} \kappa_i(n) = \infty$. But since $\lim_{n
    \rightarrow \infty} \frac{A(n)}{n} = L$ it follows that for all
  $i$,
  \begin{align*}\frac{A\left(n-s_i-\sum_{j=1}^{p_i}
        A(n-a_{ij})\right)}{n-s_i-\sum_{j=1}^{p_i} A(n-a_{ij})}
    \end{align*}
  converges to $L$ as $n \rightarrow \infty$.

  We can write
  \begin{align*}
    \frac{A(n)}{n} =&
  \sum_{i=1}^k\frac{A\left(n-s_i-\sum_{j=1}^{p_i}
      A(n-a_{ij})\right)}{n} \\
  =& \sum_{i=1}^k
  \left(\frac{n-s_i-\sum_{j=1}^{p_i} A(n-a_{ij})}{n}
  \right)
  \left(\frac{A\left(n-s_i-\sum_{j=1}^{p_i}
      A(n-a_{ij})\right)}{n-s_i-\sum_{j=1}^{p_i}
    A(n-a_{ij})}\right).
\end{align*}
Taking the limit on both sides as $n \rightarrow
  \infty$ we get $L = \sum_{i=1}^k (1 - p_iL)(L)$, from which we
  conclude (since $L>0$) that $1 = k-L\sum_{i=1}^kp_i$, or $L =
  \frac{k-1}{\sum_{i=1}^kp_i}$.

\end{proof}

In \tref{ordlimit} we assume \emph{a priori} the existence of the limit
$L$. This assumption is needed, a fact we explain further at the end of this section.

We now apply Theorem $\ref{thm:ordlimit}$ to Conolly-like meta-Fibonacci sequences.

\begin{corollary}
  Let $A(n)$ be a slow $(\alpha,\beta)$-Conolly sequence satisfying a
  recursion of the form (\ref{eq:general2}). Then $\alpha + 2\beta =
  2p$, $\alpha + \beta > 0$, and $\beta\geq 0$.
\label{cor:abConditions}
\end{corollary}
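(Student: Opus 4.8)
The plan is to derive all three conclusions from \tref{ordlimit} together with the known asymptotics of the $(\alpha,\beta)$-Conolly sequence. First I would establish that an $(\alpha,\beta)$-Conolly sequence $A(n)$ satisfies $A(n)/n \to \frac{1}{\alpha+2\beta}$. This follows from the frequency interpretation: if $m$ occurs exactly $\alpha+\beta r_m$ times, then $A(n)=m$ roughly when $n \approx \sum_{j=1}^{m}(\alpha+\beta r_j) = \alpha m + \beta\sum_{j=1}^m r_j$, and since $\sum_{j=1}^m r_j \sim 2m$ (a standard fact about the ruler function, which follows from $\sum_{j=1}^{m} r_j = \sum_{i\ge 0}\lfloor m/2^i\rfloor$), we get $n \sim (\alpha+2\beta)m$, hence $A(n)/n \to \frac{1}{\alpha+2\beta}$. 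I should note here that this requires $\alpha+2\beta > 0$; but since we have excluded the case $-\alpha=\beta>0$ and the sequence is assumed slow and tending to infinity, $\alpha+\beta>0$ will be forced, and combined with $\beta\ge 0$ (argued below) this gives $\alpha+2\beta>0$. It is cleaner to first pin down the sign conditions.

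For $\beta \ge 0$: the frequency $\alpha+\beta r_m$ must be a positive integer for every $m$. Since $r_m$ is unbounded (indeed $r_{2^t}=t+1$), if $\beta<0$ then $\alpha+\beta r_m$ becomes negative for large $m$, contradicting positivity of the frequency. Hence $\beta\ge 0$. For $\alpha+\beta>0$: taking $m$ odd gives $r_m=1$, so the frequency of any odd $m$ is exactly $\alpha+\beta$, which must be a positive integer, so $\alpha+\beta\ge 1>0$.

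With these sign conditions in hand, $\alpha+2\beta \ge \alpha+\beta > 0$, so the asymptotic $A(n)/n \to L := \frac{1}{\alpha+2\beta}$ is valid and $L$ is a nonzero limit. Now I apply \tref{ordlimit} in the case $k=2$, $p_i=p$ for recursions of the form (\ref{eq:general2}): that theorem gives $L = \frac{1}{2p}$. Equating the two expressions for $L$ yields $\alpha+2\beta = 2p$, which is the remaining claim.

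I expect the main obstacle to be the rigorous justification that an $(\alpha,\beta)$-Conolly sequence actually has the limiting ratio $\frac{1}{\alpha+2\beta}$ — in particular handling the error term in $\sum_{j=1}^m r_j = 2m + O(\log m)$ and translating the inverse relationship $n \leftrightarrow m$ into a genuine limit $A(n)/n \to L$ rather than merely $\limsup$/$\liminf$ bounds. The monotonicity of $A(n)$ (it is nondecreasing by definition of Conolly-like) makes this routine: between the indices where $A$ jumps from $m$ to $m+1$, the ratio $A(n)/n$ is squeezed between $m/(\text{partial sum up to }m+1)$ and $m/(\text{partial sum up to }m)$, both of which converge to $\frac{1}{\alpha+2\beta}$. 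Everything else is bookkeeping with the ruler-function sum and the elementary sign arguments above.
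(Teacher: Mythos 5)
Your proposal is correct and follows essentially the same route as the paper: establish $\beta\ge 0$ and $\alpha+\beta>0$ from positivity of the frequencies (odd $m$ has $r_m=1$, and $r_m$ is unbounded), show $A(n)/n\to\frac{1}{\alpha+2\beta}$ via the ruler-function sum $\sum_{j\le m} r_j = 2m+O(\log m)$ together with a squeeze between consecutive jump indices, and then invoke \tref{ordlimit} with $L=\frac{1}{2p}$ to get $\alpha+2\beta=2p$. The only cosmetic difference is your use of the identity $\sum_{j\le m} r_j=\sum_{i\ge 0}\lfloor m/2^i\rfloor$ in place of the paper's citation of the binary-digit-sum formula, which changes nothing of substance.
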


\begin{proof}
  Any slow sequence $A(n)$ has a positive frequency sequence,
  $\phi_A(m)$ (except possibly for some initial 0s). But $r_m = 1$ whenever $m$ is odd, so to have
  $\phi_A(m) = \alpha + \beta r_m > 0$ for any odd values of $m$, we must have $\alpha + \beta >
  0$. Because $r_m$ is unbounded
  and $\alpha$ is fixed, we must have $\beta \geq 0$ to ensure that
  $\phi_A(m) = \alpha + \beta r_m$ does not assume negative values. 

  The sequence $A(n)$ is $(\alpha,\beta)$-Conolly so has frequency
  sequence $\alpha+\beta r_m$. Consider the subsequence $n_h$ where
  $n_h$ is the largest integer with $A(n_h) = h$.  Then note that $n_h = h \alpha + \beta (2h + O(log(h)))$. This is because $r_1+r_2+\ldots+r_h = 2h + b_h$ where $b_h$ is the number of ones in the binary representation of $h$ (see \cite{ShallitAllouche} Theorem 3.2.1). Thus, $\frac{A(n_h)}{n_h}$ = $\frac{h}{h(\alpha+2\beta)+\beta O(log(h))}$ which converges to $\frac{1}{(\alpha + 2 \beta)}$. 

Next, we show that $\frac{A(n)}{n}$ and $\frac{A(n_h)}{n_h}$ converge to the same limit (recall that we do not \textit{a priori} know that $\frac{A(n)}{n}$ converges at all). Begin by noting that, since the sequence $A(n)$ has frequency sequence $\alpha + \beta r_m$, the number of terms between the last $h$ and the last $h+1$ will be $\alpha+\beta r_{h+1},$ that is, $n_h+\alpha+\beta r_{h+1} = n_{h+1}$. 

If $n$ is between $n_h$ and $n_{h+1}$, let $n=n_h+d$ (and note $0< d < \alpha+\beta r_{h+1}$). Then $A(n)/n = (h+1)/(n_h+d) = \frac{h}{n_h+d} + \frac{1}{n_h+d}$. Since $n_h$ is at least larger than $h$, and $d$ is bounded by a constant plus a constant times the ruler (which is in turn bounded by $1+log_2(h)$), we have that $d = O(log(n_h))$. Therefore, as $n$ increases, $\frac{h}{n_h+d}$ becomes arbitrarily close to $\frac{h}{n_h} = \frac{A(n_h)}{n_h}$, and $\frac{1}{n_h+d}$ vanishes. 

Hence $\lim_{n \rightarrow \infty} \frac{A(n)}{n} = \lim_{h \rightarrow \infty} \frac{A(n_h)}{n_h} = 
  \frac{1}{\alpha + 2 \beta}$. Thus by Theorem \ref{thm:ordlimit},
  $\alpha + 2 \beta = 2p$.
\end{proof}

From \cref{abConditions}~it follows that for given order $p$ there are $2p$ possible pairs $(\alpha,\beta)$, one for each of $\beta=0,1, \ldots, 2p-1$. For $p = 1,2,3,4$ these are given in Table \ref{tbl:Pairs}; recall that every $(\alpha,\beta)$-pair uniquely defines the corresponding Conolly-like sequence.

\medskip
\begin{table}[!ht]
\fontsize{10}{10}\selectfont
\caption{Possible $(\alpha,\beta)$-pairs for orders 1 to 4}\label{tbl:Pairs}
\center{
\begin{tabular}{c|l}
Order & Possible Pairs $(\alpha,\beta)$ \\ \hline
1 & $(2,0), (0,1)$ \\
2 & $(4,0), (2,1), (0,2), (-2,3)$ \\
3 & $(6,0), (4,1), (2,2), (0,3), (-2,4), (-4,5)$ \\
4 & $(8,0), (6,1), (4,2), (2,3), (0,4), (-2,5), (-4,6), (-6,7)$ \\
\end{tabular}
}
\end{table}
\medskip

For any fixed $p$ we now have a list of the finitely many ($\alpha, \beta$)-Conolly
sequences that could solve an order $p$ recurrence. However, so far we have only necessary conditions: for $p>1$ we haven't yet shown how to characterize which of the recursions (\ref{eq:general2}) are ($\alpha, \beta$)-Conolly.

Toward this end we wrote a program (in \texttt{C}) to systematically and efficiently
test restricted parts of the parameter space. Our search process is best described in the context of order 2, where there are only 4 possible $(\alpha,\beta)$ pairs, namely $(4,0)$, $(2,1)$, $(0,2)$,
and $(-2,3)$, and 6 parameters: $\SEQ{s}{a,b}{t}{c,d}$.

First we fixed an $(\alpha,\beta)$ pair. Then we explored parameter sets
that satisfied $0= s \le t \le 10$, $1 \le a \le b \le 12$, and $1
\le c \le d \le 30$.\footnote{Earlier experimental probing suggested that $s>0$ yielded no Conolly-like recursions.} When $s = t$ we only examined those with pairs
$(a,b)$ that are lexicographically less than or equal to $(c,d)$, so
as to avoid duplication.

With each parameter set we supplied a sufficient number of
initial conditions in order to seed the
recurrence. These values were the first 20 terms of the $(\alpha,\beta)$-Conolly sequence. We generated the first 1000 values for each sequence that was well-defined to that point and compared the result to the intended $(\alpha,\beta)$-Conolly sequence.

From our experimental data we discovered that for each of the four 2-ary order 2 $(\alpha,\beta)$-pairs indicated in Table
\ref{tbl:Pairs} there appear to be multiple Conolly-like recursions, including the recursion $\SEQ{0}{1,3}{\gamma}{\gamma+1,\gamma+3}$
where $\gamma = \alpha +\beta$. Based on this experimental evidence we conjectured that an analogous result is true
for 2-ary recursions of any order. This is the content of Theorem
$\ref{thm:alphabeta}$ below. Together with Corollary
$\ref{cor:abConditions}$ above this establishes necessary and sufficient
conditions on $\alpha$ and $\beta $ for the existence of at least one 2-ary order
$p$ $(\alpha,\beta )$-Conolly recursion.

Based on our experimental evidence, we conjectured that only finitely many $k$-ary order $p$ $(\alpha,\beta )$-Conolly recursions exist for fixed $k$ and $p$, and $\beta>0$. For $\beta=0$ we show in \sref{Ceiling} that there are infinitely many recursions of the form (\ref{eq:general2}); in fact, we demonstrate that for fixed $\alpha,k,p_i$ there are either no $(\alpha,0)$-Conolly recursions of the form (\ref{eq:generalRecurrence}) or infinitely many.

We close this section by addressing the issue we raised following Theorem \ref{thm:ordlimit}, namely, that there exist meta-Fibonacci sequences $A(n)$ that solve recursion ($\ref{eq:generalRecurrence}$) for which $A(n)/n$ does not
approach a limit. This demonstrates the necessity of the strict
conditions on Theorem $\ref{thm:ordlimit}$. One known example is Hofstadter's recursion $Q = \SEQ{0}{1}{0}{2}$ with alternate initial conditions $3,2,1$ \cite{Golomb1990}, the solution to which is the quasi-periodic sequence $Q(3w+1)=3, Q(3w+2)=3w+2$ and $Q(3w)=3w-2$.  The following result
illustrates how to construct such sequences for any values of $k$ and $p_i$. In
our approach we ``weave'' together two different solutions to the same
recursion.\footnote{Compare this result with Theorem \ref{thm:interleave} in \sref{AnyOrder}, where the resulting recursion has different order from the original one.}

\begin{theorem} (Fixed Order Interleaving Theorem)
  Let $A(n)$ and $B(n)$ both satisfy the same recursion of the form (\ref{eq:generalRecurrence}) with
  initial conditions $A(1),\ldots,A(r)$ and $B(1),\ldots,B(r)$
  respectively, so for $n>r$,
  \begin{align*}
    A(n) = \sum_{i=1}^k A\left(n-s_i-\sum_{j=1}^{p_i}
      A(n-a_{ij})\right) \text{ and } B(n) = \sum_{i=1}^k
    B\left(n-s_i-\sum_{j=1}^{p_i} B(n-a_{ij})\right).
  \end{align*}
  Define a sequence $C(n)$ as follows: $C(1),\ldots,C(2r) =
  2A(1),2B(1),2A(2),2B(2),\ldots,2A(r),2B(r)$ and for $n>r$, $C(2n-1) =
  2A(n)$ and $C(2n) = 2B(n)$. Then $C(n)$ is a solution of the ``doubled"
  recursion
  \begin{align*}
    \SEQ{2s_1}{2a_{11},2a_{12},\ldots,2a_{1p_1}}{
      2s_2;2a_{21},2a_{22},\ldots,2a_{2p_2} : \cdots
      :2s_k}{2a_{k1},2a_{k2},\ldots,2a_{kp_k}}
  \end{align*}
  of the form (\ref{eq:generalRecurrence}) with initial conditions $C(1),C(2),\ldots,C(2r)$.
\label{thm:NonSlowWeave}
\end{theorem}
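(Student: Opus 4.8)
The plan is to verify directly that $C(n)$ satisfies the doubled recursion by substituting it into the right-hand side and checking that each summand evaluates to the appropriate doubled term of $A$ or $B$. The key observation driving everything is a parity bookkeeping fact: because every initial value and every produced value of $C$ is even (it is literally $2A(\cdot)$ or $2B(\cdot)$), and every parameter in the doubled recursion is doubled, the argument $n - 2s_i - \sum_{j=1}^{p_i} C(n - 2a_{ij})$ has the same parity as $n$. Thus when $n$ is odd the argument is odd and when $n$ is even the argument is even, so each summand stays within the ``odd track'' (the $2A$ values) or the ``even track'' (the $2B$ values) respectively — the two woven sequences never interfere.

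First I would set up an induction on $n$, with the base case $n \le 2r$ handled by the stated initial conditions. For the inductive step, fix $n > 2r$ and split into the two cases $n = 2m-1$ and $n = 2m$; I will write out the odd case, the even case being identical with $A$ replaced by $B$. Writing $n = 2m-1$, I would compute, for each $i$, the inner sum $\sum_{j=1}^{p_i} C(2m-1 - 2a_{ij})$. Since $2m-1-2a_{ij}$ is odd and equals $2(m-a_{ij})-1$, the inductive hypothesis (or the initial-condition definition) gives $C(2m-1-2a_{ij}) = 2A(m-a_{ij})$, so the inner sum is $2\sum_{j=1}^{p_i} A(m-a_{ij})$. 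Then the $i$-th argument of the outer $C$ is
\begin{align*}
2m-1 - 2s_i - 2\sum_{j=1}^{p_i} A(m-a_{ij}) = 2\Bigl(m - s_i - \sum_{j=1}^{p_i} A(m-a_{ij})\Bigr) - 1,
\end{align*}
which is odd, so applying the definition of $C$ on odd indices yields $2A\bigl(m - s_i - \sum_{j=1}^{p_i} A(m-a_{ij})\bigr)$. Summing over $i$ from $1$ to $k$ gives $2\sum_{i=1}^k A\bigl(m-s_i-\sum_{j=1}^{p_i}A(m-a_{ij})\bigr) = 2A(m)$ by the recursion for $A$, and $2A(m) = C(2m-1) = C(n)$, as required.

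I expect the main (really the only) obstacle to be the well-definedness bookkeeping: I must confirm that whenever the argument $m - s_i - \sum_{j=1}^{p_i} A(m-a_{ij})$ is a valid (positive) index for $A$ — which is exactly the hypothesis that $A$ solves its recursion — the corresponding doubled argument $2(\cdots) - 1$ is a valid index for $C$, and conversely that no argument of $C$ can fall into the ``wrong'' range. This is where the hypothesis that $A$ and $B$ satisfy the same recursion with the same parameters is used, and where one must be slightly careful that the first $2r$ values of $C$ really do interleave correctly so that the inductive hypothesis can be invoked for small arguments (the definition of $C$ on $1,\ldots,2r$ matches the formula $C(2m-1)=2A(m)$, $C(2m)=2B(m)$ for $m \le r$, so there is no clash). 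Once the parity invariant is stated cleanly the rest is a routine substitution, so I would present the parity claim as the conceptual heart and relegate the index-validity check to a sentence.
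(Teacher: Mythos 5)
Your proposal is correct and follows essentially the same route as the paper: induction on $n$ split into the odd case $n=2m-1$ (with the even case analogous), substituting the definition of $C$ so that each summand's argument is odd of the form $2\bigl(m-s_i-\sum_{j=1}^{p_i}A(m-a_{ij})\bigr)-1$, and concluding $C(n)=2A(m)$; the parity observation you isolate is exactly what the paper's chain of equalities uses implicitly, together with the fact that $m-a_{ij}<m$ keeps the induction valid.
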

\begin{proof}

  We show that for any $n > 2r$, $C(n) = \sum_{i=1}^k
  C\left(n-2s_i-\sum_{j=1}^{p_i} C(n-2a_{ij})\right)$. We proceed by
  induction for the case $n$ odd. The case $n$ even is entirely
  analogous so we omit the details.

  It is straightforward to verify that the result holds for
  $n=2r+1$. Assume that it holds for all odd $n$ up to $n=2m-3>2r$. We
  show it holds for $n = 2m-1$, where $m>r$.

  Applying the definition of $C(n)$ and simple algebra yields
  \begin{align*}
    &\sum_{i=1}^k C\left(2m-1-2s_i-\sum_{j=1}^{p_i}
      C(2m-1-2a_{ij})\right)
    = \sum_{i=1}^k C\left(2m-1-2s_i-\sum_{j=1}^{p_i} C(2(m-a_{ij}) - 1)\right)\\
    =& \sum_{i=1}^k C\left(2m-1-2s_i-\sum_{j=1}^{p_i}
      2A(m-a_{ij})\right)
    = \sum_{i=1}^k C\left(2(m-s_i-\sum_{j=1}^{p_i} A(m-a_{ij}))-1\right)\\
    =& \sum_{i=1}^k 2A\left(m-s_i-\sum_{j=1}^{p_i} A(m-a_{ij})\right)
    = 2 \sum_{i=1}^k A\left(m-s_i-\sum_{j=1}^{p_i} A(m-a_{ij})\right)
    = 2A(m).
  \end{align*}
  Note that we make use of the fact that $m-a_{ij}<m$ on several
  occasions in the above induction argument.

\end{proof}

It is evident that an analogous result can be proved in a similar way for three or more solutions and sets of initial conditions.

Using this result we construct a solution $C(n)$ to
a 2-ary order 1 recursion with the property that $\frac{C(n)}{n}$
does not have a limit; note that this technique generalizes to provide counterexamples of arbitrary order and arity. Let $A(n)$ satisfy the recursion $\langle
1;1:3;3 \rangle [0,0,0,0]$, which generates a sequence of all zeroes,
so $\frac{A(n)}{n}$ converges to 0. Let $B(n)$ satisfy the recursion
$\langle 1;1:3;3 \rangle [1,1,1,2]$, that is, the same recursion but
with different initial conditions. The solution $B(n)$ is the slow
sequence in which all numbers that are not powers of two appear twice,
and all numbers that are powers of two appear three times, so
$\frac{B(n)}{n}$ converges to $\frac{1}{2}$ (for a proof of these
facts about $B(n)$ see \cite{BLT}). Applying Theorem
$\ref{thm:NonSlowWeave}$ with the solutions $A(n)$ and $B(n)$ to the
recursion $\langle 1;1:3;3 \rangle$ results in the solution $C(n)$ to
the recursion $\langle 2;2:6;6 \rangle$. The even subsequence gives
$\lim \sup \frac{C(n)}{n} = \frac{1}{2}$ and the odd subsequence
yields $\lim \inf \frac{C(n)}{n} = 0$. Therefore the sequence
$\frac{C(n)}{n}$ has no limit.

\section{Order 1 Conolly-Like Recursions} \label{sec:Order1}

Recall from Table $\ref{tbl:Pairs}$ that the only Conolly-like sequences
that might solve a $2$-ary, order $1$ recursion of
the form ($\ref{eq:general2}$) are $(0,1)$-Conolly
and $(2,0)$-Conolly. We begin by examining which order 1 recursions are $(0,1)$-Conolly.

Aside from the original Conolly recursion $\SEQ{0}{1}{1}{2}[1,2]$ our detailed computer search identified only one candidate, namely, $\SEQ{0}{2}{3}{5}[1,2,2,3,4]$. We now prove that the Conolly sequence satisfies this recursion. We conjecture that this is the only other order 1 $(0,1)$-Conolly recursion.

\begin{theorem}
  Both of the order 1 recurrences
  \begin{align*}
    C(n)=C(n-C(n-1))+C(n-1-C(n-2))
  \end{align*}
  with $C(1) = 1, C(2) = 2$ and
  \begin{align*}
    H(n)=H(n-H(n-2)) + H(n-3-H(n-5))
  \end{align*}
  with $H(1) = 1, H(2) = H(3) = 2, H(4) = 3, H(5) = 4$
  have the Conolly sequence as their solution.
  \label{thm:L(n)Conolly}
\end{theorem}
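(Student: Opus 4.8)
The plan is to prove both statements via the tree-based combinatorial interpretation of the Conolly sequence alluded to in the introduction. Recall that the Conolly sequence $C(n)$ is slow, tends to infinity, and has frequency sequence $\phi_C(m) = r_m$, the ruler function; equivalently, if $n_h$ denotes the largest index with $C(n_h) = h$, then $n_h = r_1 + r_2 + \cdots + r_h$. The first statement, that $C(n) = \SEQ{0}{1}{1}{2}[1,2]$, is the classical result of Conolly and is recalled only for context; I will focus on the recursion $H(n) = H(n - H(n-2)) + H(n-3-H(n-5))$. The idea is to build a labelled infinite tree $\T$ whose structure encodes the Conolly sequence, in which each node at ``level'' $h$ corresponds to the block of indices $n$ with $C(n) = h$, and then show that for the Conolly sequence the two summands $H(n-H(n-2))$ and $H(n-3-H(n-5))$ count, respectively, the sizes of two complementary subforests hanging below the node containing $n$, so that their sum is exactly the size of that node's subtree, which in turn equals $C(n)$.

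Concretely, here are the steps. First I would record the exact combinatorial facts about the Conolly sequence needed: its slowness, the relation $n_{h} = n_{h-1} + r_h$, and the self-similar ``doubling'' structure of the difference sequence (the string recurrence $E_{n+1} = E_n E_n 0$ from the $\alpha=0,\beta=1$ specialization of the generating-function footnote). Second, I would define the infinite labelled tree: its nodes are the positive integers $h$ (the ``values'' taken by $C$), arranged so that the parent/child relation mirrors the ruler-function recursion, and I would attach to node $h$ a label recording the interval $[n_{h-1}+1, n_h]$ of indices mapping to it. Third — and this is the crux — I would verify the two ``navigation'' identities: that subtracting $C(n-2)$ from $n$ lands in a specific ancestor or cousin node whose subtree size is one of the two summands, and likewise that subtracting $3 + C(n-5)$ from $n$ lands in the complementary piece. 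These identities should be established by induction on $n$ (equivalently, by induction down the levels of the tree), using the explicit values $n_h$ and the fact that $C$ increases by $0$ or $1$ at each step. Fourth, I would check the base cases against the given initial conditions $H(1)=1,\ H(2)=H(3)=2,\ H(4)=3,\ H(5)=4$, which match $C(1),\dots,C(5)$, and confirm the recursion's arguments stay positive. Finally, I would conclude that $H$ and $C$ satisfy the same recursion with the same initial data, hence $H \equiv C$; alternatively, the bijection between the two subforests and the node's subtree directly gives $H(n) = C(n)$ for all $n$.

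The main obstacle is step three: setting up the tree so that \emph{both} shift amounts — the ``$-2$'' inside the first term and the ``$-3$'' and ``$-5$'' in the second — align cleanly with tree-structural quantities. Unlike the original Conolly recursion, where the shifts $0,1$ and $1,2$ are minimal and the tree is the ``canonical'' one, here the larger and asymmetric shifts mean the two summands correspond to a less obvious partition of the subtree, and one must pin down precisely which indices $n$ need special treatment near the left end of each block (where $C(n-2)$ or $C(n-5)$ may reference a different level than the generic case). I expect the argument to require a careful case analysis according to the position of $n$ within its block $[n_{h-1}+1, n_h]$ and according to the 2-adic valuation of $h$, analogous to but more intricate than the $p=1$ arguments of \cite{JacksonRuskey, FRusCDeg}. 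Once the correct partition of the subtree is identified, the verification of the identities and the base cases should be routine bookkeeping.
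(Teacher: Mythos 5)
There is a genuine gap: your proposal defers exactly the step that constitutes the proof. You correctly identify the overall strategy (a labelled infinite tree, an inductive verification that the two summands count two complementary pieces, matching the five initial values), but the crux --- your ``step three'' --- is left as an expectation that ``a careful case analysis'' will work, without specifying the combinatorial device that makes the shifts $-2$, $-3$, $-5$ align with tree-structural quantities. That device is the actual content of the theorem's proof, and your proposed setup (a tree whose nodes are the values $h$, each carrying the block of indices $[n_{h-1}+1,n_h]$, with ``navigation identities'' about where $n-H(n-2)$ lands) is not obviously workable: $n-2$ and $n-5$ do not sit at clean block boundaries, so the quantities $H(n-2)$ and $H(n-5)$ do not translate directly into subtree sizes in a value-indexed tree, and nothing in your sketch explains how they would.

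The paper resolves this differently and quite concretely. It takes the same infinite binary tree as in \cite{JacksonRuskey} (with $s$-nodes, regular nodes, and leaves), but gives each leaf \emph{two cells} holding three consecutive preorder labels (one in the first cell, two in the second), and defines $L(n)$ to be the number of nonempty cells of the tree $T(n)$ labelled $1,\dots,n$ in preorder. A short lemma computes $L(n-2)$ from $L(n)$ according to which of four positions the label $n$ occupies (regular node, or first/second/third label of a leaf); this is what tames the ``$-2$'' shift. Then a pruning operation (initial, deletion, cell-creation, lifting and correction steps) transforms $T(n)$ into $T(n-L(n-2))$, and a five-case analysis shows the nonempty cells of the pruned tree biject with the nonempty \emph{left-leaf} cells of $T(n)$, so $L(n-L(n-2))$ counts left-leaf cells; since each leaf holds exactly three labels, replacing $n$ by $n-3$ mirrors left and right sibling leaves, which is precisely why $L(n-3-L(n-5))$ counts the right-leaf cells, giving $L(n)=L(n-L(n-2))+L(n-3-L(n-5))$. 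Finally, a separate induction on difference strings ($F_1=110110$, $F_2=0F_1$, $F_{n+1}=0F_nF_n$, compared with the Conolly difference strings $D_{n+1}=0D_nD_n$) shows $L$ \emph{is} the Conolly sequence --- a step your outline assumes implicitly when you identify the tree with the Conolly sequence from the start. Without an explicit construction playing the role of the two-celled leaves and the pruning/correction bookkeeping, your plan does not yet yield a proof.
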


As we pointed out in the Introduction, it is proved combinatorially in \cite{JacksonRuskey} that the ruler function $r_m$ is the
frequency sequence of the Conolly sequence by exhibiting an explicit
bijection between the Conolly sequence and the number of leaves of a
labeled infinite binary tree.  We elaborate on this methodology to
show that the ruler function is also the frequency sequence of $H(n)$.

We begin by defining the following infinite binary tree $T$.  We start with a sequence
of nodes called \emph{$s$-nodes} so that for each $m>0$, the $m^{th}$
$s$-node is the root of a complete binary tree of height $m+1$, with the
$(m-1)^{st}$ $s$-node as its left child. Call the nodes on the bottom level \emph{leaves}. Each leaf contains two cells, \cell{cell 1}{cell 2}. The nodes which are neither leaves nor $s$-nodes are called \emph{regular} nodes.

Define $T(n)$ to be $T$ with the $n$ labels $1,2,\ldots ,n$ inserted
in $T$ in the following way: label the nodes of $T$ in pre-order,
beginning with the leftmost leaf. Regular nodes receive one label,
$s$-nodes receive no labels, and leaves receive three consecutive
labels, the left cell receiving the first label and the right cell
receiving the following two labels. For example, if a left leaf child
has labels \cell{$a+1$}{$a+2,a+3$}, then its parent (at the
penultimate level) must be labeled with $a$ while its right sibling
has labels \cell{$a+4$}{$a+5,a+6$} so long as there are sufficient
labels (it may occur that there are insufficient labels to fully label
the final leaf). See Figure $\ref{fig:T(20)}$. A cell is considered
\emph{non-empty} if it contains at least one label, and non-labels are sometimes
denoted by $\varnothing$ to avoid ambiguity.


\begin{figure}
\begin{center}
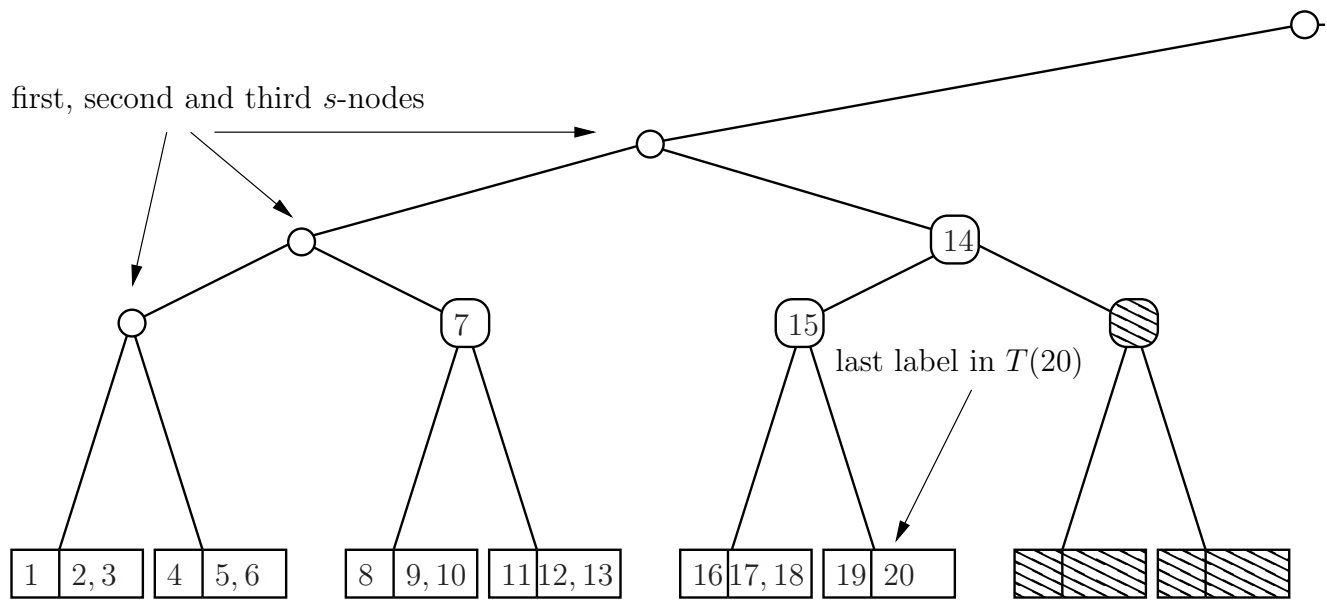
\caption{The infinite binary tree $T(20)$.}
\label{fig:T(20)}
\end{center}
\end{figure}

\begin{table}[!ht]
\fontsize{10}{10}\selectfont \caption{The sequence
$L(n)$ for $n =1,2,\ldots,20$}\label{tab:tabLn} \center{
\begin{tabular}{|c|c c c c c c c c c c c c c c c c c c c c|}
\hline
$n$                 & 1 & 2 & 3 & 4 & 5 & 6 & 7 & 8 & 9 & 10 & 11 & 12 & 13 & 14 & 15 & 16 & 17 & 18 & 19 & 20 \\
\hline
$L(n)$              & 1 & 2 & 2 & 3 & 4 & 4 & 4 & 5 & 6 & 6  & 7  & 8  & 8  & 8  & 8  & 9  & 10 & 10 & 11 & 12 \\
\hline
\end{tabular}
}
\end{table}

Let $L(n)$ be the number of non-empty cells in $T(n)$. See
\tabref{tabLn}~for the first twenty terms of the sequence $L(n)$,
which can readily be computed from $T(20)$ in
\fref{T(20)}.\footnote{It is evident that $T(m)$ is a sub-tree of
  $T(n)$, for $m\leq n$.}  Our strategy is to show that for all $n$,
$L(n) = H(n)$ and that $L(n)$ is the Conolly sequence.  Note that $L(n)$
has the same $5$ initial values as $H(n)$, namely, $1,2,2,3,4$. For $n>5$ we show that $L(n)$ satisfies the same recursion as $H(n)$, namely, $L(n)=L(n-L(n-2)) + L(n-3-L(n-5))$.


To do so we extend the general technique described in \cite{Rpaper}. We define a pruning operation that transforms the tree $T(n)$ with $n$ labels and $L(n)$ cells into the tree $T(n-L(n-2))$ with $n-L(n-2)$ labels and $L(n-L(n-2))$ cells. Before doing so we digress to establish a required technical result relating $L(n)$, the number of cells in $T(n)$, and $L(n-2)$. Our strategy is to use the fact that the tree $T(n-2)$ is obtained from $T(n)$ by removing the last two labels, $n-1$ and $n$.

\begin{lemma}
For all $n \ge 3$,
\[
L(n-2) =
\begin{cases}
L(n) & \mbox{ if } n \text{ is on a regular node,} \\
L(n)-1 & \mbox{ if } n \text{ is the first label on a leaf,} \\
L(n)-2 & \mbox{ if } n \text{ is the second label on a leaf,} \\
L(n)-1   & \text{ if } n \text{ is the third label on a leaf.}
\end{cases}
\]
\label{lemma:cellcount}
\end{lemma}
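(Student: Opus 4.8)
The plan is to build on the remark stated just before the lemma: because the labels are placed in $T$ in one fixed traversal order $1,2,\ldots$, the tree $T(n-2)$ is exactly $T(n)$ with its two largest labels $n-1$ and $n$ erased. So it suffices to track how the number of non-empty cells changes when we erase, one at a time, the label $n$ from $T(n)$ and then the label $n-1$ from $T(n-1)$; in each step we are erasing the currently largest label.

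First I would compute the local effect of erasing the largest label $j$ from $T(j)$. If $j$ sits on a regular node (or, vacuously, an $s$-node) it lies in no cell, so $L$ is unchanged. If $j$ is the first label of a leaf, then in $T(j)$ the first cell of that leaf holds only $j$ and its second cell is still empty (its would-be entries $j+1,j+2$ have not yet been placed), so erasing $j$ empties a cell and $L$ drops by $1$. If $j$ is the second label of a leaf, i.e.\ the first entry of that leaf's second cell, then in $T(j)$ that cell holds only $j$, so again $L$ drops by $1$. If $j$ is the third label of a leaf, then after erasing $j$ the second cell still contains its first entry $j-1$, so $L$ is unchanged. In short, write $\delta(j)$ for this change: $\delta(j)=-1$ exactly when $j$ is the first or second label of a leaf, $\delta(j)=0$ otherwise, and $L(n-2)=L(n)+\delta(n)+\delta(n-1)$.

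Second I would pin down the type of label $n-1$ from the type of $n$. The labelling rule partitions the label sequence into consecutive blocks, each block being either a single label (on a regular node) or a run of up to three labels on one leaf, placed in the order ``first cell, then the two entries of the second cell''; $s$-nodes contribute no block, and only the very last block can fail to be complete. Hence, if $n$ lies on a regular node or is the first label of a leaf, then $n$ begins a block, so $n-1$ ends the preceding block, which is therefore complete and ends either on a regular node or with the third label of a leaf; if $n$ is the second label of a leaf then $n-1$ is the first label of that same leaf; and if $n$ is the third label of a leaf then $n-1$ is the second label of that leaf.

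Combining the two ingredients settles all four cases. If $n$ is on a regular node then $\delta(n)=0$ and $\delta(n-1)=0$ (since $n-1$ is regular or a third leaf label), so $L(n-2)=L(n)$. If $n$ is the first label of a leaf then $\delta(n)=-1$ and $\delta(n-1)=0$, so $L(n-2)=L(n)-1$. If $n$ is the second label of a leaf then $\delta(n)=-1$ and $\delta(n-1)=-1$ (erasing the first label of that same leaf also empties its first cell), so $L(n-2)=L(n)-2$. If $n$ is the third label of a leaf then $\delta(n)=0$ but $\delta(n-1)=-1$, so $L(n-2)=L(n)-1$. The one delicate point — the step I expect to need care rather than cleverness — is the bookkeeping for a partially labelled final leaf: one must evaluate $\delta$ using the state of the cells in $T(n)$ and $T(n-1)$, not in the fully labelled tree, and must invoke that only the last block of the label sequence can be truncated when asserting that the block ending at $n-1$ is complete. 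The smallest relevant values, $n=3$ (the third label of the leftmost leaf) and $n=4$ (the first label of the next leaf), are covered directly by these cases.
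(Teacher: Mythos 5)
Your proposal is correct and follows essentially the same route as the paper: both use the fact that $T(n-2)$ is $T(n)$ with the labels $n$ and $n-1$ removed, determine the position of $n-1$ from that of $n$, and count the cells that are emptied in each of the four cases. Splitting the removal into two single-label steps via your $\delta$ function is only a bookkeeping variant of the paper's simultaneous removal.
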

\begin{proof}
  If $n$ is on a regular node, then $n-1$ is either on a regular node or is the third label on a leaf (equivalently, the second label in the second cell of the leaf). Therefore, removing $n$ and $n-1$ cannot empty any cells so $L(n-2)=L(n)$. If $n$
  is the first label on a leaf, then again $n-1$ is either on a regular node or is the third label on a leaf. Thus removing $n$ and $n-1$ empties the cell containing $n$ but no others. Hence $L(n-2)=L(n)-1$. If $n$ is the second
  label on a leaf, i.e., the first label in the second cell of the leaf, then $n-1$ is the first label on that leaf and the only label in the first cell. Removing $n$ and $n-1$ empties both the cells in which they are contained, so $L(n-2) =L(n)-2$.  Finally, if $n$ is the third label on a leaf, then $n-1$ and $n$ are the two labels on that leaf's second cell, so removing these labels results in emptying only that cell. In this final case $L(n-2)= L(n)-1$.
%
\end{proof}

The use of the above lemma is as follows. We want to turn $T(n)$ into $T(n-L(n-2))$. The naive way to do this would be to simply remove the last $L(n-2)$ labels in preorder, but we cannot prove anything useful about the resulting tree with this pruning method. Rather, we exploit the fact that since $L(n)$ is the number of nonempty cells of $T(n)$, it follows that $T(n-L(n))$ is just $T(n)$ with one label deleted from every non-empty cell. We then use the above lemma to adjust $T(n-L(n))$ into $T(n-L(n-2))$, which together with other small changes preserves the original cell structure of $T$. In what follows we explain the precise details of this process.

\begin{figure}
\begin{center}

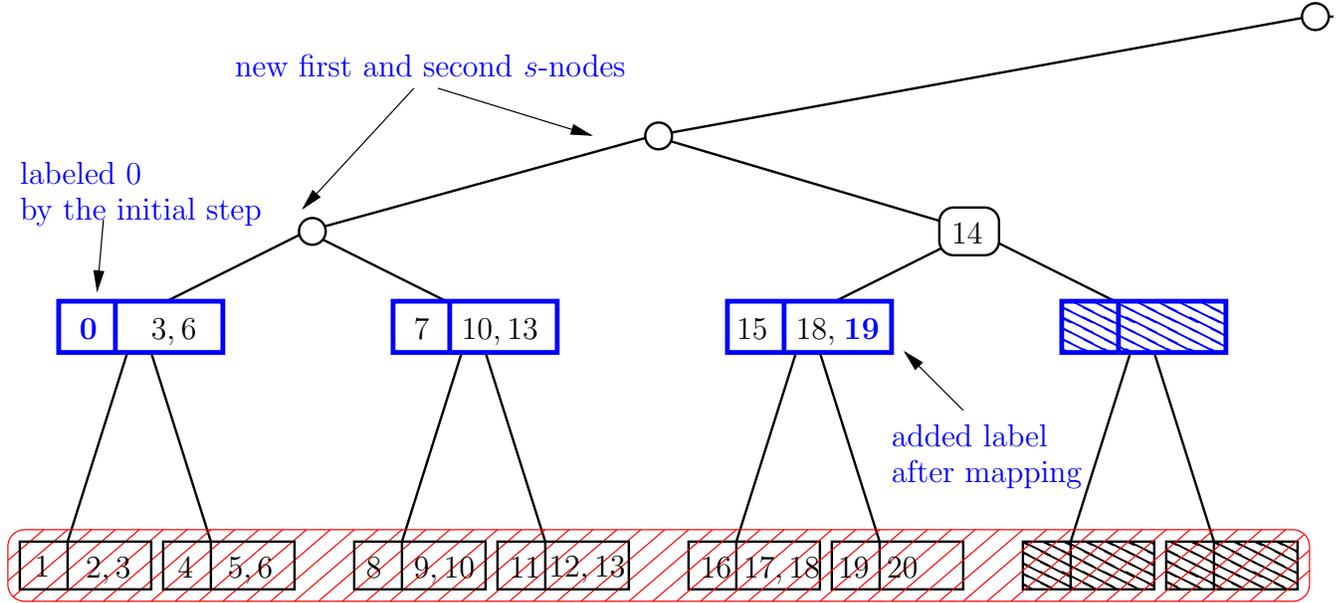

\caption{The pruning of $T(20)$ before the final relabeling. After
  relabeling this tree will be $T(10)$.}
\label{fig:pruning}
\end{center}
\end{figure}


We break down our pruning operation into several discrete steps that,
when combined, transform $T(n)$ to get to $T(n-L(n-2))$. See Figure
$\ref{fig:pruning}$ for an illustration with $n=20$. Begin by labeling
the first $s$-node with the label $0$, thereby making it a regular
node at the penultimate level (the \emph{initial step}). Next, delete
$L(n)$ labels from $T(n)$, any single label from each non-empty cell (the
\emph{deletion step}). In each of the penultimate level nodes of the
resulting tree, create two cells, inserting the existing label from
that node in the first of these cells (the \emph{cell creation
  step}). Empty all the leaves by moving any remaining leaf labels
(these labels must be in the second cell) to the second cell in their
respective parent at the penultimate level, and then delete all the
(empty) leaves (the \emph{lifting step}). Finally, following from
Lemma $\ref{lemma:cellcount}$, in some cases we need to add or
subtract one label to be sure we end up with exactly $n-L(n-2)$ labels
(recall that we have already added one label in the initial step). If
$n$ was on a regular node in $T(n)$, we remove the final label. If $n$
was the first or third label on a leaf in $T(n)$ we make no further
changes. If $n$ was the second label on a leaf in $T(n)$, we add one
label in the next available position in preorder (the \emph{correction
  step}).

We now check that the tree resulting from the pruning operation is $T(n-L(n-2))$. It is clear that the tree has the correct structure so we need only confirm that it contains the correct number of labels. Before the pruning operation $T(n)$ contained $n$ labels. After the initial step there are $n+1$ labels. Following the deletion step there are $n+1-L(n)$ labels. By Lemma $\ref{lemma:cellcount}$ the labels, if any, introduced or deleted in the correction step result in a total of $n-L(n-2)$ labels. Notice that we could renumber the labels of this tree from $1$ to $n-L(n-2)$ but since only the number of labels matters we omit this step.

\begin{theorem}
  If $n \ge 6$ then the number of nonempty cells in the left leaves of $T(n)$
  is $L(n-L(n-2))$ and the number of nonempty cells in the right leaves of
  $T(n)$ is $L(n-3-L(n-5))$. Hence $L(n)= L(n-L(n-2))+L(n-3-L(n-5))$.
\label{thm:leftright}
\end{theorem}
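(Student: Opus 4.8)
The statement makes three assertions, and the third is a formal consequence of the first two: every non-leaf node of $T$ has exactly two children, so each leaf of $T(n)$ is either the left or the right child of its penultimate-level parent, and since every cell lies in a leaf, the nonempty cells of $T(n)$ partition into those in left leaves and those in right leaves; hence $L(n)$ is the sum of the two quantities in the theorem and the recurrence $L(n)=L(n-L(n-2))+L(n-3-L(n-5))$ follows. The second assertion, moreover, reduces to the first. In the pre-order labelling of $T$ the three labels of a left leaf $\ell_v$ are immediately followed by the three labels of its sibling right leaf $r_v$, so $r_v$ carries precisely the labels of $\ell_v$ increased by $3$; consequently, for each penultimate node $v$, the cells of $r_v$ that are nonempty in $T(n)$ are exactly the cells of $\ell_v$ that are nonempty in $T(n-3)$, and the penultimate nodes contributing a nonempty right-leaf cell in $T(n)$ are exactly those contributing a nonempty left-leaf cell in $T(n-3)$. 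Thus the number of nonempty cells in the right leaves of $T(n)$ equals the number of nonempty cells in the left leaves of $T(n-3)$, which by the first assertion applied at $n-3$ is $L((n-3)-L((n-3)-2))=L(n-3-L(n-5))$; the few $n\ge 6$ for which $n-3$ is too small to invoke the first assertion are checked directly against Table \ref{tab:tabLn}. So the whole theorem comes down to the single claim that the number of nonempty cells in the left leaves of $T(n)$ is $L(n-L(n-2))$.

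For this I would use the pruning operation, which has already been shown to carry $T(n)$ to $T(n-L(n-2))$ --- a tree that by the definition of $L$ has exactly $L(n-L(n-2))$ nonempty cells. It therefore suffices to prove that this number is the number of nonempty cells in the left leaves of $T(n)$, and I would do so by tracking the pruning one step at a time at the level of cells. After the deletion step the first cell of every old leaf is empty and the second cell of an old leaf retains exactly one label iff it was full in $T(n)$; after the cell-creation and lifting steps each old penultimate node $v$ has become a new leaf whose first cell holds $v$'s original label (or the label $0$ supplied to the first $s$-node in the initial step) and whose second cell holds the survivors lifted up from the left and right leaf children of $v$. Since the penultimate nodes of $T(n)$ are in bijection with its left leaves (each penultimate node has a unique left leaf child), and since a penultimate node whose entire subtree is labelled yields a new leaf with exactly two nonempty cells, matching the two nonempty cells of its left leaf child, the two counts coincide apart from the contributions of the finitely many penultimate nodes and left leaves near label $n$. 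That residual discrepancy is exactly what Lemma \ref{lemma:cellcount} and the correction step handle: Lemma \ref{lemma:cellcount} pins the number of labels in the pruned tree to $n-L(n-2)$ according to which of the four positions label $n$ occupies (on a regular node, or the first, second, or third label of a leaf), and running through these four cases shows that the spurious or missing cells created near the boundary --- by the extra label $0$, by a partially labelled final left leaf, and by the $\pm1$ of the correction step --- always cancel, so the pruned tree and the left leaves of $T(n)$ carry the same number of nonempty cells.

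The main obstacle is precisely this boundary bookkeeping. Away from the last few labels the match is a transparent two-cells-for-two-cells pairing of a penultimate node with its left leaf child; near label $n$ the pairing fails locally --- a partly filled final left leaf may carry more cells than the new leaf at its parent, while an otherwise empty new leaf produced by the correction step carries one extra --- and one must verify in each case of Lemma \ref{lemma:cellcount} that these local errors offset one another; that is exactly why the lemma was isolated beforehand. Finally, $n\ge 6$ is needed so that $n-2$, $n-5$, $n-L(n-2)$ and $n-3-L(n-5)$ are all admissible arguments of $L$, and the handful of smallest $n$ arising in the two reductions are dispatched by Table \ref{tab:tabLn}.
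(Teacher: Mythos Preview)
Your proposal is correct and follows essentially the same approach as the paper: reduce the second assertion to the first via the shift-by-$3$ observation, and prove the first by showing that the pruning operation establishes a cell-count bijection between the leaves of $T(n-L(n-2))$ and the left leaves of $T(n)$, with a boundary case analysis near label $n$. The paper organizes that boundary analysis slightly differently---rather than splitting according to the four cases of Lemma~\ref{lemma:cellcount}, it fixes a penultimate node $X$ with label $a$ and runs through five cases $n=a,\ a+1,\ a+2,\ a+3$ to $a+5,\ \ge a+6$, proving the \emph{local} statement that $X$ ends the pruning with exactly as many nonempty cells as its left child had beforehand; this avoids having to argue that errors ``cancel'' globally.
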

\begin{proof}
  We first show that the number of non-empty cells in the left leaves of $T(n)$ is
  $L(n-L(n-2))$. Consider an arbitrary nonempty penultimate level node $X$ of $T(n)$ with the label $a$. We show that after the pruning process is complete, $X$ will have the same number of nonempty cells as its left child had before the pruning process. We consider 5 cases.

  Case 1: $n=a$. In this case after the cell creation step in the pruning process $X$ will be labeled \cell{$a$}{$\varnothing$,$\varnothing$}. Since $X$ has empty children, after the lifting step, the new labeling on $X$ will remain unchanged. Since $X$ is the last non-empty node in preorder for the correction step, and since initially $n$ was on the regular node $X$, the correction step deletes the label $a$, leaving $X$ empty. Thus, after the pruning process $X$ has no non-empty cells, as required.

  Case 2: $n=a+1$. Here the left child of $X$ is labeled \cell{$a+1$}{$\varnothing$,$\varnothing$}; the right child is empty. After the lifting step, $X$ will be labeled \cell{$a$}{$\varnothing$,$\varnothing$}, and the correction step does nothing because $n$ was the first label on a leaf. Thus, after the pruning process $X$ has one nonempty cell, corresponding to the one non-empty cell that its left child had before the pruning process.

  Case 3: $n=a+2$. In this case the left child of $X$ is labeled \cell{$a+1$}{$a+2$,$\varnothing$} and the right child is empty. So, after the lifting step, $X$ will be labeled \cell{$a$}{$\varnothing$,$\varnothing$}. Since $X$ is now last in preorder, and $n$ was the second entry in a leaf, the correction step adds one label to $X$, causing it to have an entry in its second cell after the pruning process. Thus $X$ has two non-empty cells, corresponding to the two non-empty cells that its left child had before the pruning process.

  Case 4: $n=a+3,a+4$ or $a+5$. In all of these cases, the left child of $X$ is full, with labels \cell{$a+1$}{$a+2,a+3$} while the right child of $X$ has at most two labels so at most one label per cell. After the lifting step, $X$ is labeled \cell{$a$}{$a+3$,$\varnothing$}. Since the correction step will either do nothing or add a second label to the already nonempty second cell of $X$, $X$ ends the pruning process with two nonempty cells, as required.

  Case 5: $n \geq a+6$. In this final case, the left and right children of $X$ are fully labeled as \cell{$a+1$}{$a+2,a+3$} and \cell{$a+4$}{$a+5,a+6$}, respectively. Thus, after the lifting step, $X$ is labeled \cell{$a$}{$a+3,a+6$}. The correction step may remove at most one label from $X$, namely, the label $a+6$. But doing so does not empty the second cell of $X$, so $X$ ends the pruning process with two nonempty cells.

  We next show that the number of nonempty cells in the right leaves of $T(n)$ is
  $L(n-3-L(n-5))$. First note that by substituting $n-3$ for $n$ in
  the above discussion we conclude that $L(n-3-L(n-5))$ counts the number of
  nonempty cells in the left leaves of $T(n-3)$. Since leaves of $T$ can hold up to three labels, if we have a sibling pair consisting of a left and right leaf, then a cell in the left leaf will be nonempty in $T(n-3)$ if and only if the corresponding cell in the right leaf is nonempty in $T(n)$. Thus, the number of nonempty cells in the left leaves of $T(n-3)$ is the same as
  the number of nonempty cells in the right leaves of $T(n)$. This completes the proof.
\end{proof}

We now show that $L(n)$ is the Conolly sequence, from which it
follows that the frequency sequence for $L(n)$ is the ruler function.

\begin{theorem}
The solution sequence $L(n)$ of the recursion
\[
H(n)=H(n-H(n-2)) + H(n-3-H(n-5))
\]
with initial conditions $H(1) = 1, H(2) = H(3) = 2, H(4) = 3, H(5) = 4$ is the Conolly sequence.

\end{theorem}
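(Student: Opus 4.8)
The plan is to combine \tref{leftright} with a frequency-sequence computation on the tree $T$. First I would complete the identification $L(n)=H(n)$: the two sequences agree on $n=1,\dots,5$ (both equal $1,2,2,3,4$), and \tref{leftright} shows that for $n\ge 6$ the sequence $L$ satisfies $L(n)=L(n-L(n-2))+L(n-3-L(n-5))$, which is exactly the $H$-recursion; since the pruning construction produces honest trees $T(n-L(n-2))$ and $T(n-3-L(n-5))$, the two arguments are positive and strictly smaller than $n$, so strong induction gives $L=H$. It then suffices to show that $L(n)$ is the Conolly sequence, and for this, following the methodology of \cite{JacksonRuskey}, it is enough to prove that $L$ is slow with $L(1)=1$ and that its frequency sequence is the ruler function $r_m$: a slow sequence with first term $1$ is completely determined by its frequency sequence, and $r_m$ is known to be the frequency sequence of the Conolly sequence.

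That $L$ is slow is easy: $L$ is nondecreasing because $T(m)$ is a subtree of $T(n)$ for $m\le n$ (a cell that is nonempty in $T(m)$ stays nonempty in $T(n)$), consecutive differences lie in $\{0,1\}$ because one new label fills at most one cell, and $L(n)\to\infty$ because the preorder eventually labels every cell of the infinite tree $T$. Hence $\phi_L(m)$ is the length of the maximal run of indices $n$ with $L(n)=m$. Now every cell of $T$ lies on a leaf, two cells per leaf, and in the preorder labelling the $i$-th leaf $\ell_i$ receives three consecutive labels: the first fills one cell (raising the running cell count to $2i-1$), the second fills the other (count $2i$), the third fills no new cell, and then exactly $d_i\ge 0$ \emph{regular} nodes are labelled before $\ell_{i+1}$ is reached --- the $s$-nodes carry no labels and so contribute no positions --- at which point the count becomes $2i+1$. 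Therefore $\phi_L(2i-1)=1$ and $\phi_L(2i)=d_i+2$.

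The crux is to prove $d_i=v_2(i)$, the $2$-adic valuation of $i$; since $r_{2i-1}=1$ and $r_{2i}=v_2(i)+2$, this yields $\phi_L(m)=r_m$ for all $m$ and finishes the argument. I would establish $d_i=v_2(i)$ from the block structure of $T$: $s$-node $m$ has left child $s$-node $m-1$ and right child $R_m$, a complete binary tree with $m$ levels, so the $m$-th block (the subtree at $R_m$) has $2^{m-1}$ leaves for $m\ge 2$ while block $1$ has $2$ leaves, whence the last leaf of block $m$ is $\ell_{2^m}$. Passing from $\ell_i$ to $\ell_{i+1}$ in preorder means ascending to the first ancestor with an unvisited right subtree and then descending the left spine of that subtree to $\ell_{i+1}$, labelling each node passed other than $\ell_{i+1}$ itself, and that descent always runs through some $R_m$, which contains only regular nodes and leaves. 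If $i=2^m$, the ascent reaches $s$-node $m+1$ and the descent follows the left spine of $R_{m+1}$ (which has $m+1$ levels), giving $d_i=m=v_2(2^m)$; otherwise $\ell_i$ and $\ell_{i+1}$ lie in the same complete binary tree $R_m$, and the classical fact that moving from the $j$-th to the $(j+1)$-th leaf of a complete binary tree ascends and descends $v_2(j)+1$ edges gives $d_i=v_2(j)=v_2(i)$, where $j=i-2^{m-1}$ is the within-block index (note $v_2(j)<m-1$, so $v_2(2^{m-1}+j)=v_2(j)$). The smallest cases $i=1,2$, where block $1$ is exceptional because the parent of $\ell_1,\ell_2$ is the unlabelled $s$-node $1$, are checked by hand. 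The main obstacle is precisely this last step: carrying out the preorder bookkeeping on the block structure of $T$ without error, especially the boundary behaviour at the $s$-nodes and the special first block. Everything else --- the induction $L=H$, the slowness of $L$, and the deduction of ``$L$ is Conolly'' from $\phi_L=r$ --- is routine given \tref{leftright} and \cite{JacksonRuskey}.
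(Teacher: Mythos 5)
Your proposal is correct, and it reaches the conclusion by a different route than the paper. The paper works with the first-difference sequence: it encodes the differences of $L$ block-by-block as strings $F_m$ attached to the subtrees hanging off the $s$-nodes, proves the self-similar recursion $F_{m+1}=0F_mF_m$ by induction on the block structure, and then matches this against the known Conolly difference string $\mathcal{D}=D_0D_0D_1D_2\cdots$ of \cite{JacksonRuskey} via the conjugation identity $F_n=0^{-1}D_n0$. You instead work with the frequency sequence: after noting that $L$ is slow with $L(1)=1$ (so it is determined by its frequency sequence), you compute $\phi_L(2i-1)=1$ and $\phi_L(2i)=d_i+2$, where $d_i$ counts the regular nodes labelled between consecutive leaves, and you prove $d_i=v_2(i)$ from the block structure together with the classical fact that passing from the $j$-th to the $(j+1)$-st leaf of a complete binary tree traverses $v_2(j)$ internal nodes; the cited result of \cite{JacksonRuskey} that the Conolly sequence has frequency sequence $r_m$ then finishes the proof. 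Both arguments exploit the same self-similarity of $T$ and both outsource the description of the Conolly sequence to \cite{JacksonRuskey} (difference string there, frequency sequence here). The paper's string identity is compact and avoids any run-length bookkeeping; your version is more elementary in flavor — no free-group manipulation — and it directly yields the statement the paper actually wants downstream, namely that the ruler function is the frequency sequence of $H$, at the cost of the careful preorder accounting at the $s$-node boundaries and the special first block, which you have handled correctly ($d_{2^m}=m$ for the cross-block step, $d_i=v_2(j)=v_2(i)$ within a block, and $i=1,2$ by hand).
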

\begin{proof}
  We prove that $L(n)$ is the Conolly sequence by showing that both
  $L(n)$ and the Conolly sequence have the same first difference sequences.
  Define the finite binary string $D_n$ recursively by the rules $D_0
  = 1$ and $D_{n+1} = 0D_n D_n$ whenever $n\geq 0$.  Then the infinite
  sequence of successive first differences $C(n+1)-C(n)$ of the
  Conolly sequence $C(n)$ is given by
\[
\mathcal{D} = D_0D_0D_1D_2D_3 \cdots,
\]
with the convention that $C(0) = 0$ so the difference sequence starts
with $C(1) - C(0) = 1$ (see \cite{JacksonRuskey}).

Let $d(n) = L(n) - L(n-1)$. Then $d(n)$ is the increase in the
number of non-empty cells when applying an $n$th label to $T(n-1)$,
where we take $T(0)$ to be the unlabeled tree $T$, that is, $T(0) = T$ and $L(0) = 0$. Define the binary string $F_n$ by the rules
$F_1 = 110110$, $F_2 = 0F_1$ and $F_{n+1} = 0F_nF_n$ whenever
$n\geq 2$. We show by induction that the infinite binary sequence
\[
\mathcal{F} = F_1F_2F_3 \cdots
\]
gives the sequence $d(1)d(2)d(3)\ldots$

To do this, we introduce a new symbol. For all $m>0$, let $T_m$ be the subtree of $T$ consisting of the right child of the $m^{th}$ $s$-node and all the descendants of that right child.

We first show that $F_1F_2 = d(1)d(2)\cdots d(13)$. The string $F_1$ gives the subsequence of $d(n)$ corresponding to any pair of leaf siblings in $T(n)$ (and hence $d(1)d(2)\cdots d(6) = F_1$).  The right child of the second $s$-node of $T$ is
the root of $T_2$, a $3$-node complete binary tree and clearly $d(7)d(8)\cdots d(13) = 0F_1 = F_2$, so $F_2$ is the subsequence of $d(n)$ corresponding to $T_2$.

We now show inductively that for $m\geq 2$, $F_m$ is the subsequence of $d(n)$ corresponding to $T_m$. We already have the base case, since $F_2$ is the subsequence of $d(n)$ corresponding to $T_2$. Assume that $F_{m-1}$ is the subsequence of $d(n)$ corresponding to $T_{m-1}$. The key observation is that for $m \geq 3$, $T_m$ consists of a single regular node from which descend two copies of $T_{m-1}$. The preorder traversal of $T_m$ begins with the regular node at its root (corresponding to a $0$ in the difference sequence) followed by the left copy of $T_{m-1}$ (corresponding to one repetition of $F_{m-1}$ in the difference sequence by the induction hypothesis) followed by the right copy of $T_{m-1}$ (hence another repetition of $F_{m-1}$ in the difference sequence). Therefore the part of the difference sequence corresponding to $T_m$ is $F_m = 0F_{m-1}F_{m-1}$. Since the sequence of subtrees $T_m$ are labeled successively, and the $s$-nodes that join them in pairs are empty, $d(1)d(2)\cdots = \mathcal F$.

Finally, we show that $\mathcal F = \mathcal D$, from which we
conclude that $L(n)$ matches the Conolly sequence. In what follows, by the notation $0^{-1}$ we mean the inverse of $0$ in the free group on the symbols $0,1$ which make up these binary strings. Similarly, $1^{2}$ means the string $11$, and so on. Observe that $F_1 =
D_0D_0D_10$. For $n>1$, we show by induction that $F_n = 0^{-1}D_n0$,
from which we get that $\mathcal F = \mathcal D$. For $n=2$ this can
be verified directly: $F_2 = 01^201^20 = 0^{-1}0^21^201^20 =
0^{-1}0D_1D_10 = 0^{-1}D_20$.

For $n\geq 2$ assume $F_n = 0^{-1}D_n0$. Then $F_{n+1}
= 0F_nF_n=00^{-1}D_n00^{-1}D_n0 = D_nD_n0 = 0^{-1}0D_nD_n0 =
0^{-1}D_{n+1}0$ as required. This completes the induction.
\end{proof}

%

Recall from Table $\ref{tbl:Pairs}$ that there are only two possibilities for the values of the $(\alpha, \beta)$ pairs for Conolly-like solutions of 2-ary, order 1 meta-Fibonacci recursions, namely, $(\alpha, \beta) = (0,1)$ or $(2,0)$. We conclude our discussion of such recursions by identifying all the  $(2, 0)$-Conolly 2-ary, order 1 meta-Fibonacci recursions. Note that the $(2,0)$-Conolly solution to these recursions is by definition the ceiling sequence $\cln{n}{2}$ (see \cite{BLT} for a special case). Our characterization of these recursions follows immediately from a more general result (see \tref{ceiling} below).

\begin{corollary}
  The function $\cln{n}{2}$ is the unique solution that satisfies the 2-ary, order 1
  recursion $R = \SEQ{s}{a}{t}{b}$, given sufficiently many initial conditions $R(z) = \cln{z}{2}$, if and only if $a$ and $b$ are both odd, and $2(s+t) = a+b$.
\label{cor:ceiling1}
\end{corollary}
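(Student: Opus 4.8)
The plan is to deduce this as a special case of the general ceiling-function result Theorem \ref{thm:ceiling}, which is stated later in the paper. Since Corollary \ref{cor:ceiling1} concerns the case $k=2$, $p_1=p_2=1$, $\alpha=2$, $\beta=0$ (so the target Conolly-like sequence has frequency $\phi(m)=2$ for all $m$, i.e.\ the sequence $\lceil n/2 \rceil$), I would first record that by Corollary \ref{cor:abConditions} the only order $1$ candidate with $\beta=0$ forces $\alpha+2\beta=2p=2$, hence $\alpha=2$, and the corresponding Conolly-like sequence is precisely $R(z)=\lceil z/2\rceil$; this identifies $\lceil n/2\rceil$ as the unique sequence that \emph{could} be an $(\alpha,0)$-Conolly solution of $\SEQ{s}{a}{t}{b}$. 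So the content to verify is exactly: $\lceil n/2\rceil$ solves $R(n)=R(n-a-\lceil n/2\rceil) + R(n-b-\lceil n/2\rceil)$ (after simplifying $R(n-s-R(n-a))$ using $R(n-a)=\lceil(n-a)/2\rceil$) for all sufficiently large $n$ if and only if $a,b$ are both odd and $2(s+t)=a+b$.

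The core of the argument is the ceiling identity. Substituting $R(z)=\lceil z/2\rceil$ into $\SEQ{s}{a}{t}{b}$, the recursion becomes the claim that
\[
\Big\lceil \tfrac{n}{2}\Big\rceil = \Big\lceil \tfrac{1}{2}\big(n-s-\lceil\tfrac{n-a}{2}\rceil\big)\Big\rceil + \Big\lceil \tfrac{1}{2}\big(n-t-\lceil\tfrac{n-b}{2}\rceil\big)\Big\rceil
\]
holds for all large $n$. The plan is to analyze this by splitting into residue classes of $n$, $a$, $b$ modulo $2$ (or modulo $4$, since nesting a ceiling inside a ceiling mixes residues), reduce each ceiling of a ceiling using the standard identity $\lceil \lceil x/2\rceil /2 \rceil = \lceil x/4 \rceil$ and $\lceil (m - \lceil m/2\rceil)/1\cdots\rceil = \lfloor m/2 \rfloor$, and thereby rewrite each summand as a floor/ceiling function of $n$ that is piecewise of the form $(n - c_i)/2$ up to a bounded correction. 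Matching the left side $\lceil n/2\rceil$ against the sum then yields two conditions: a "slope/parity" condition that is satisfied exactly when $a$ and $b$ are both odd (an even $a$ or $b$ desynchronizes the inner ceiling from $n$ and breaks the identity on half the residues), and an "offset" condition obtained by comparing constant terms, which works out to $2(s+t)=a+b$. For the converse, I would show that if either of these conditions fails then the identity fails for infinitely many $n$ in some residue class, so $\lceil n/2\rceil$ is not a solution no matter how many initial conditions are prescribed.

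The cleanest route, and the one I expect the paper to take, is simply to invoke Theorem \ref{thm:ceiling} with the parameter specialization $k=2$, $p=1$, $\alpha=2p=2$, $\beta=0$: that theorem should state precisely the if-and-only-if criterion (odd parameters, and a linear relation among the $s_i$ and the $a_{ij}$ that here reads $2(s+t)=a+b$) for an arbitrary-order nested recursion of the form \eref{generalRecurrence} to be $(\alpha,0)$-Conolly, together with the fact that the solution is then forced to be the ceiling sequence. The main obstacle is entirely contained in proving the general Theorem \ref{thm:ceiling} — the ceiling-of-ceiling bookkeeping across residue classes and the verification that the stated linear condition is both necessary and sufficient — and once that general identity is in hand, the corollary is immediate by substitution. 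One should also check the mild point that "sufficiently many initial conditions $R(z)=\lceil z/2\rceil$" is enough to force the solution to agree with $\lceil n/2\rceil$ for all $n$; this follows because, with $a,b$ odd and $2(s+t)=a+b$, all arguments $n-s-R(n-a)$ and $n-t-R(n-b)$ on the right are positive and strictly less than $n$ once $n$ exceeds a bound depending only on $s,t,a,b$, so a straightforward induction from the prescribed initial block closes the argument.
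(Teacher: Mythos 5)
Your proposal is correct and follows essentially the same route as the paper: Corollary \ref{cor:ceiling1} is obtained by specializing Theorem \ref{thm:ceiling} to $p=1$, where conditions (1) and (2) force $a$ and $b$ odd and condition (3) collapses to $2(s+t)=a+b$, with the passage from formal satisfaction to the unique generated solution handled exactly as you sketch (the paper's Corollary \ref{cor:produceceiling}, an induction from sufficiently many ceiling-valued initial conditions).
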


For any given set of parameters $\set{s,a,t,b}$ with $a$ and $b$ both odd and $2(s+t) =
a+b$, it is easy to show that
\begin{align*}
m = \max\set{a,b,s+\frac{a+1}{2},t+\frac{b+1}{2}}
\end{align*}
initial conditions are sufficient. This upper bound is not tight,
however.  For example, if the parameters are 1,3,3,5 then the initial
conditions 1,1,2,2,3 are sufficient, but $m=6$ in
this case.



\section{Conolly-Like Recursions of Higher Order} \label{sec:AnyOrder}

From Sections \ref{sec:Exp} and \ref{sec:Order1} we have necessary and
sufficient conditions for the existence of a $2$-ary, order $1$
$(\alpha,\beta)$-Conolly recursion. In this section
we extend this result to recursions of all orders using a proof
strategy similar to that of \tref{leftright}. For this purpose we
invent a new labeling of the same tree $T$ defined in
\sref{Order1}.

First, we state our key result:

\begin{theorem}\label{thm:alphabeta}
  There exists a 2-ary  $(\alpha, \beta)$-Conolly meta-Fibonacci recursion if and only if $\beta \geq 0,
  \alpha+\beta >0$, and $\alpha$ is even.
\end{theorem}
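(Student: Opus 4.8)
The plan is to handle the two implications separately, with the ``only if'' direction being essentially immediate from what we have already proved.

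\emph{Necessity.} Suppose $R$ is a 2-ary recursion of the form (\ref{eq:general2}) having an $(\alpha,\beta)$-Conolly solution. Since we have excluded the one non-slow case $-\alpha=\beta>0$, this solution is slow, so \cref{abConditions} applies directly: it gives $\beta\ge 0$, $\alpha+\beta>0$, and $\alpha+2\beta=2p$, where $p$ is the order of $R$. The last equality forces $\alpha=2(p-\beta)$, which is even. So there is nothing new to prove in this direction.

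\emph{Sufficiency (construction).} Now assume $\beta\ge 0$, $\alpha+\beta>0$, and $\alpha$ even. Put $\gamma=\alpha+\beta$ and $p=(\alpha+2\beta)/2$; because $\alpha$ is even and $\alpha+2\beta=(\alpha+\beta)+\beta>0$, $p$ is a positive integer. I would show that the order-$p$ recursion
\[
R \;=\; \SEQ{0}{1,3,5,\ldots,2p-1}{\gamma}{\gamma+1,\gamma+3,\gamma+5,\ldots,\gamma+2p-1},
\]
seeded with a long enough prefix of the $(\alpha,\beta)$-Conolly sequence $A(n)$, has $A(n)$ as its solution; since this recursion is 2-ary, that proves the claim. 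Following the strategy announced just before the statement, I would do this with a tree model: reuse the infinite tree $T$ of \sref{Order1}, but equip it with a new cell structure (more cells per leaf, plus, when $\beta>0$, extra ``padding'' cells) chosen so that, when the labels $1,\ldots,n$ are inserted in preorder, the number $L(n)$ of non-empty cells of $T(n)$ has exactly the recursively-described first-difference sequence of $A(n)$ (cf. the generating function for $A$ recorded in the introduction); that is, $L(n)=A(n)$ for all $n$. This identity would be proved just as the corresponding statement at the end of \sref{Order1}, by induction over the self-similar difference blocks.

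\emph{Sufficiency (pruning).} The rest of the argument parallels \tref{leftright}. First I would establish a counting lemma in the style of Lemma \ref{lemma:cellcount}, describing how $L$ changes when the last block of labels is removed, according to where label $n$ lands in $T(n)$. Then I would define a pruning operation on $T(n)$ — an initial step, a deletion step, a cell-creation step, a lifting step, and a correction step — whose net effect is to transform $T(n)$ into $T\!\bigl(n-\sum_{j=1}^{p}A(n-(2j-1))\bigr)$, and in such a way that the non-empty cells in the left-descending part of $T(n)$ are counted precisely by $L$ of this shifted argument, hence by $A\!\bigl(n-\sum_{j=1}^{p}A(n-(2j-1))\bigr)$ — the first summand of $R(n)$. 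The shift-by-$\gamma$ symmetry deliberately built into the parameters of $R$ means the identical argument applied to $T(n-\gamma)$ shows the right-descending part of $T(n)$ is counted by $A\!\bigl(n-\gamma-\sum_{j=1}^{p}A(n-\gamma-(2j-1))\bigr)$, the second summand. Since every non-empty cell of $T(n)$ lies in exactly one of these two parts, $A(n)=L(n)$ equals their sum, i.e.\ $A(n)=R(n)$; a routine count of how many initial conditions are needed completes the proof.

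\emph{Main obstacle.} The difficulty is not any single inequality but the combinatorial bookkeeping: choosing the new cell structure so that $L=A$ holds exactly — in particular handling $\beta=0$ and the smallest values of $p$ uniformly — and making the pruning operation, especially the correction step that handles the last partially-labelled leaf, precise enough to provably yield the correctly truncated tree in every boundary case. Once the tree labelling and the pruning map are set up correctly, everything else is mechanical.
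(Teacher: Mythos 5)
Your necessity argument and your choice of the candidate recursion \eref{alphabetarecursion} coincide with the paper's, and your five-step pruning outline is modelled on \tref{leftright}, so the high-level plan is sound. The problem is that everything that actually carries the sufficiency proof is deferred. You never specify the cell structure that is supposed to make your cell count $L(n)$ equal the $(\alpha,\beta)$-Conolly sequence, you do not state the counting lemma, and you do not give the case analysis showing that the pruning really produces the truncated tree and puts its non-empty cells in bijection with the left leaves of $T(n)$ — and these are exactly the nontrivial parts. The paper does not reuse $T$'s cell structure at all: it introduces a different tree $U$ whose regular nodes hold up to $\beta$ labels and whose leaves hold $\alpha+\beta$ labels as a single undivided unit, and it counts non-empty \emph{leaves} $M(n)$, proving $M$ is $(\alpha,\beta)$-Conolly by a direct scaling comparison with the Conolly tree of \cite{JacksonRuskey} rather than by a difference-block induction. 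That design is what makes the shifts $1,3,\ldots,2p-1$ and $\gamma+1,\ldots,\gamma+2p-1$ in \eref{alphabetarecursion} come out: the deletion step removes one label from a leaf for each of the shifted trees $U(n-1),U(n-3),\ldots,U(n-2p+1)$ in which that leaf is non-empty (\lref{deletionstepcap}), which needs leaf capacity at least $p=\alpha/2+\beta$, and for $\alpha<0$ needs the deficit device of borrowing up to $-\alpha$ labels from the parent, possible only because $\alpha+\beta>0$. None of this machinery appears, even in sketch form, in your proposal.

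Moreover, the specific variant you gesture at — keeping $T$ and counting cells — is not a routine adaptation, because the choice of what you count determines which recursion the pruning certifies. This is visible already in \sref{Order1}: counting leaves of the undivided tree yields the Conolly recursion $\SEQ{0}{1}{1}{2}$, while counting cells of the same tree subdivided into cells yields the different recursion $\SEQ{0}{2}{3}{5}$, with different shift parameters. So unless the cell capacities and the cell-creation/lifting/correction steps are engineered very carefully, the identity your pruning naturally proves will have shifts other than $1,3,\ldots,2p-1$, i.e.\ it will verify some Conolly-like recursion but not \eref{alphabetarecursion}; and your claim that $L=A$ can be shown "by induction over the self-similar difference blocks" presupposes a block decomposition for general $(\alpha,\beta)$ that you have not written down. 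In short: the necessity direction is fine and identical to the paper's, but the sufficiency direction as proposed has a genuine gap — the decisive combinatorial construction is missing, and the structural choice you suggest risks proving the wrong recursion.
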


Observe that the ``only if'' part of \tref{alphabeta} follows directly
from \cref{abConditions}. To prove sufficiency we claim that the
recursion

\begin{align}
  \SEQ{0}{1,3,5,\ldots,2p-1}{\gamma}{\gamma+1,\gamma+3,\gamma+5,\ldots,\gamma+2p-1}
  \label{eq:alphabetarecursion}
\end{align}
has an $(\alpha,\beta)$-Conolly solution, where $\gamma=\alpha+\beta$, and where it follows from Corollary $\ref{cor:abConditions}$ that $p$ must be $\alpha/2+\beta$.

To establish our claim we first describe the infinite binary tree $U$ that provides the required combinatorial interpretation for the solution to the above recursion, as well as the pruning process that we apply to $U$. The tree $U$ has the same general structure as the tree $T$ used in \tref{leftright}, that is, $U$ has $s$-nodes, regular nodes, and leaves in the same
locations as $T$. As with $T$, the $s$-nodes of $U$ are always empty. However, there are some important differences from $T$ in the way that we label the remaining nodes of $U$. The regular nodes of $U$ contain up to $\beta$ labels
(rather than $1$ label, as in $T$). The leaves of $U$ do not have multiple cells; each leaf of $U$ can contain a total of $\alpha+\beta$ labels (rather than a maximum of 3 labels in $T$).

Define $U(n)$ to be $U$ with the $n$ labels $1$ through $n$ assigned
in preorder. As was the case with $T(n)$, we refer to nodes of $U(n)$
that have no labels as ``empty," and nodes in $U(n)$ containing their
maximum number of labels as ``full." Only the last nonempty node in
$U(n)$, the one containing the label $n$, can be partly
filled. Analogous to the function $L(n)$ in \tref{leftright}, define
$M(n)$ to be the number of nonempty leaf nodes on $U(n)$. Unlike $L$,
since the leaves of $U(n)$ do not have multiple cells, $M$ counts
leaves, not cells. See $U(17)$ in \fref{uofn} where $\alpha = 2$ and
$\beta = 1$.

\begin{figure}
\begin{center}
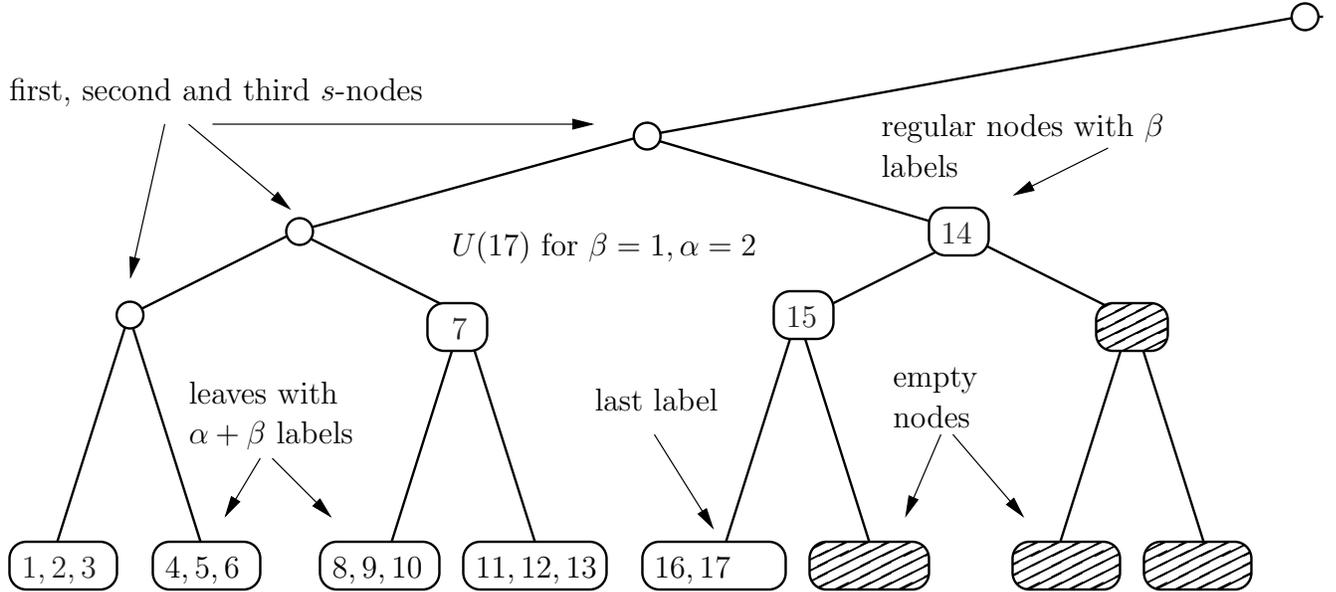
\caption{The labeled tree $U(17)$ with $\alpha = 2$ and $\beta = 1$.
We see that $M(17) = 5$.}
\label{fig:uofn}
\end{center}
\end{figure}

We define a pruning operation on the tree $U(n)$ which transforms
$U(n)$ into the tree $U(n-\sum_{j=1}^{j=p}M(n-2j+1))$. Because of the
much greater generality of the result that we seek to prove here this
pruning operation is considerably more complex than the one we
described above for $T(n)$. We proceed with the explanation in stages,
beginning with $\alpha \geq 0$.

The \textit{initial step} is as follows: take $U(n)$ and add $\beta$
labels to the first $s$-node, thereby making this node identical to
the other full regular nodes at the penultimate level (now the tree
has $n+\beta$ total labels). Next, the \textit{deletion step}: for
each leaf $Y$ of $U(n)$, remove one label from $Y$ for each of the
trees $U(n-1), U(n-3), \ldots, U(n-2p+1)$ in which $Y$ is not an empty
node. For example, if the first label on $Y$ is $a$, and $n \geq a +
2p - 1$, then $Y$ is necessarily full in $U(n)$ and nonempty in all
$p$ of the trees $U(n-1),U(n-3),\ldots,U(n-2p+1)$. Thus, in this case
the deletion step removes $p$ labels from $Y$. More generally, if $Y$
is any full leaf, there are certainly at least $p$ labels available to
remove since $\alpha \geq 0$ and $p = \alpha/2 + \beta \leq \alpha +
\beta$; if $Y$ is not full then by \lref{deletionstepcap} below we
will not delete all the labels of $Y$. Our tree now has
$n+\beta-\sum_{j=1}^{j=p}M(n-2j+1)$ labels.

Notice that, as a further consequence of  \lref{deletionstepcap}, each leaf (except any children of the last penultimate node) will have precisely $\alpha/2$ labels left (with the children of the last penultimate node having at most $\alpha/2$ labels left). We deal with all of these leftover labels in the \textit{lifting step}: we take any labels remaining in leaves and lift them into the parent of their leaf node in
$U$. Complete the lifting step by deleting all of the (now empty)
leaves. This means that every penultimate node except for the last penultimate node will recieve precisely $\alpha$ labels, and the last penultimate node will recieve at most $\alpha$ labels. Therefore, all penultimate nodes but the last will have exactly $\alpha+\beta$ total labels (since they started with $\beta$), and the last penultimate node will have at most $\alpha+\beta$ total labels.

We complete the pruning process with the \textit{correction step}:
delete the last $\beta$ labels (by preorder) in the tree. It is clear
from the construction that our final pruned tree has
$n-\sum_{j=1}^{j=p}M(n-2j+1)$ labels and is
$U(n-\sum_{j=1}^{j=p}M(n-2j+1))$, up to renumbering of labels. See
\fref{uofnprune}.

\begin{figure}
\begin{center}
 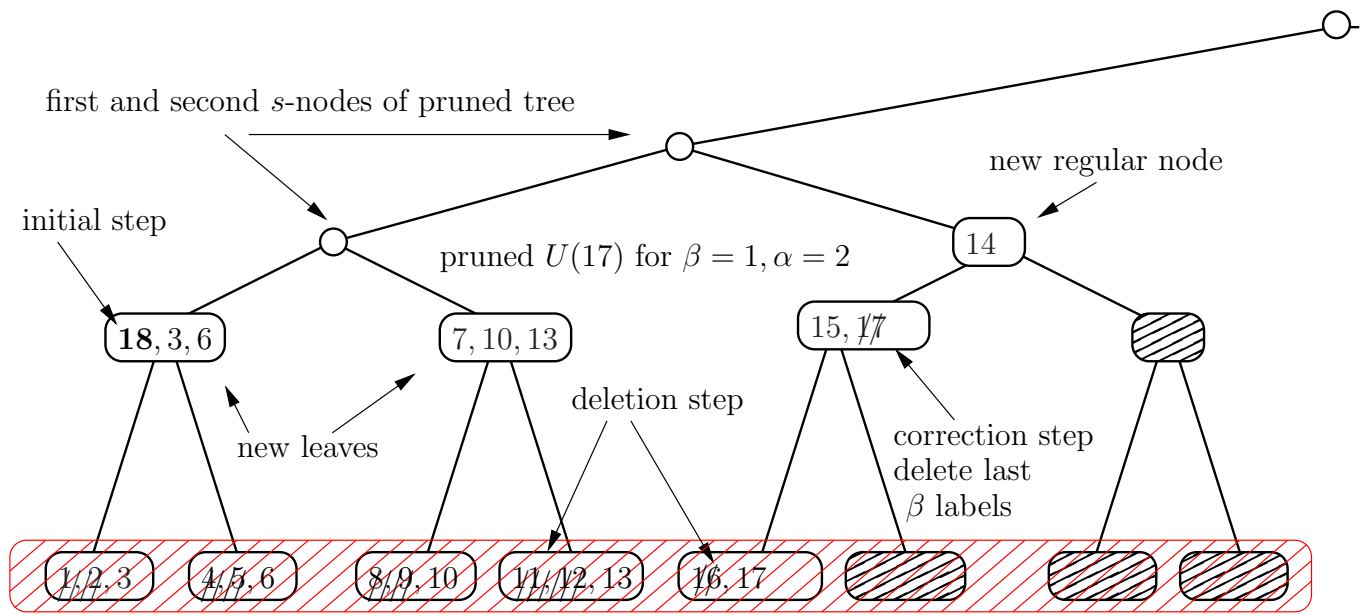
 \caption{The pruning of $U(17)$ with $\alpha = 2$ and $\beta = 1$
   before the final relabeling. After relabeling this tree will be
   $U(8)$.}
\label{fig:uofnprune}
\end{center}
\end{figure}

We now describe how to prune $U(n)$ when $\alpha < 0$. In this case
the leaves contain fewer than $p$ labels. We follow the same pruning
process as above, except when it would require that we delete more
labels from a leaf node than that node contains. If the leaf $Y$ has a
deficit $\delta$ in the number of available labels to delete, then we
delete all the labels from $Y$ and $\delta$ labels from the parent of
$Y$. Note that since $\delta \leq -\alpha/2$, we delete at most
$-\alpha$ labels from the parent of each leaf. Since $\alpha + \beta >
0$, we always have enough labels in the penultimate level nodes for
this to be possible. See Figures \ref{fig:uofnnegalpha} and
\ref{fig:uofnnegalphaprune}.

\begin{figure}
\begin{center}
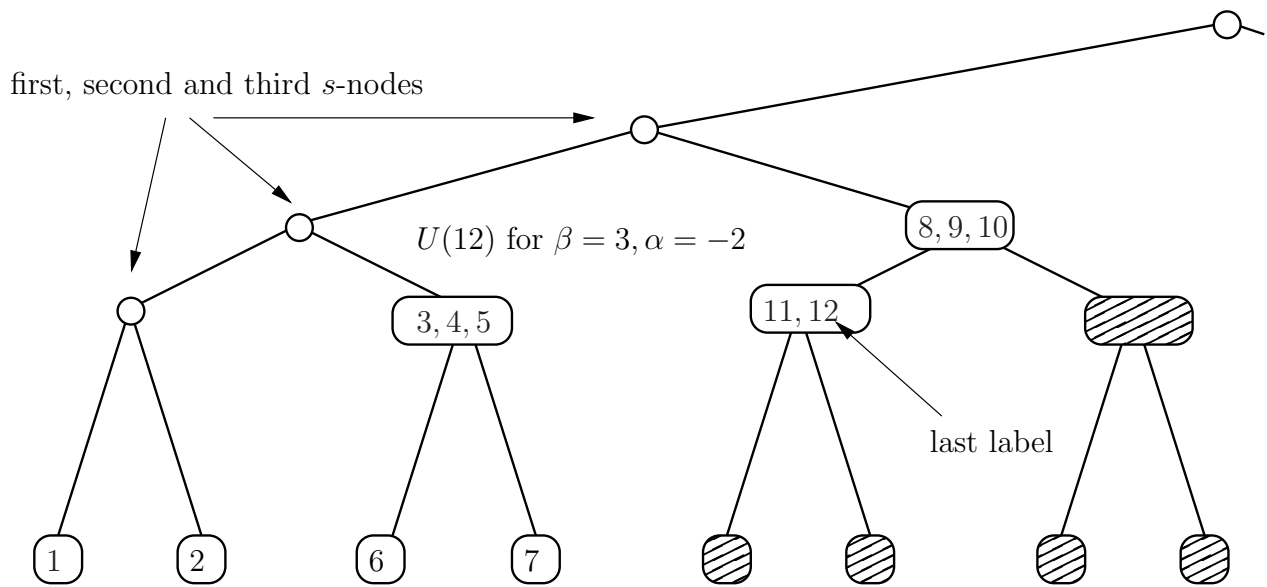
 \caption{The labeled tree $U(12)$ with $\alpha = -2$ and $\beta = 3$.}
\label{fig:uofnnegalpha}
\end{center}
\end{figure}

\begin{figure}
\begin{center}
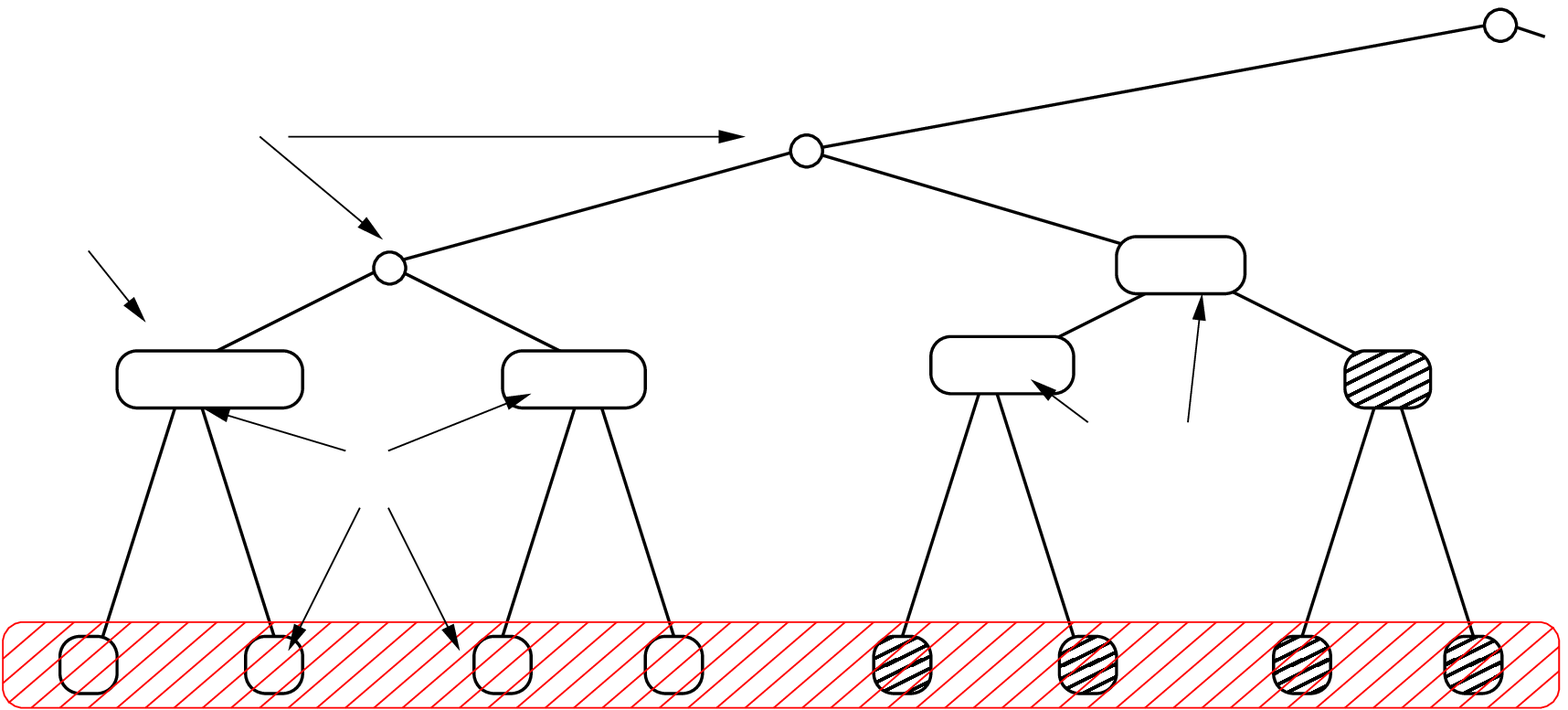
 \caption{The pruning of $U(12)$ with $\alpha = -2$ and $\beta = 3$
   before the final relabeling.  After relabeling this tree will be $U(4)$.}
\label{fig:uofnnegalphaprune}
\end{center}
\end{figure}

Analogous to Lemma $\ref{lemma:cellcount}$, we count the number of labels that are deleted from any given leaf in the following lemma.

\begin{lemma}
  \label{lem:deletionstepcap}

Let $Y$ be a leaf node of $U(n)$. Suppose there are $d$ labels on or
after $Y$ in preorder (that is, labels either on $Y$ or on nodes which are further along in preorder than $Y$). Then during the deletion step of the pruning
process, $min\{\lfloor d/2 \rfloor, p\}$ labels are deleted from $Y$.
\end{lemma}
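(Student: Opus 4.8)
The plan is to reduce the lemma to a one-line count of how many of the $p$ shifted trees $U(n-1), U(n-3), \ldots, U(n-2p+1)$ have $Y$ non-empty. First I would dispose of the degenerate case: if $Y$ is empty in $U(n)$, then no label lies on or after $Y$, so $d=0$; also $Y$ is then empty in every $U(m)$ with $m \le n$, so the deletion step removes nothing on account of $Y$, in agreement with $\min\{\lfloor 0/2 \rfloor, p\} = 0$.

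Now suppose $Y$ is non-empty in $U(n)$ and let $a$ be its first label. Because the labels $1,2,\ldots,n$ are assigned to the slots of $U$ consecutively in preorder (the $s$-nodes receiving none), exactly $a-1$ slots precede $Y$; hence the labels lying on $Y$ or on nodes occurring later than $Y$ in preorder are precisely $a, a+1, \ldots, n$, so $d = n - a + 1$. Moreover, since $U(m)$ occupies the first $m$ of those same slots (for $m \le n$), the leaf $Y$ is non-empty in $U(m)$ exactly when $m \ge a$. The deletion step removes one label on account of $Y$ for each $i \in \{1,\ldots,p\}$ such that $Y$ is non-empty in $U(n-(2i-1))$, i.e.\ such that $n - (2i-1) \ge a$, i.e.\ $2i-1 \le n-a = d-1$, i.e.\ $i \le \lfloor d/2 \rfloor$. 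As $\lfloor d/2 \rfloor \ge 0$, the number of such $i$ in $\{1,\ldots,p\}$ is $\min\{\lfloor d/2 \rfloor, p\}$, which is the claim. (When $\alpha < 0$ a leaf may be too small to absorb all of these deletions and the overflow is taken from its parent, but the count attributed to $Y$ is unaffected, so the statement is unchanged.)

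I do not anticipate a genuine obstacle here: the argument is essentially the observation that whether $Y$ is non-empty in a subtree $U(m)$ depends only on $Y$'s first label, together with the elementary identity that the number of odd integers in $[1, n-a]$ equals $\lceil (n-a)/2 \rceil = \lfloor (n-a+1)/2 \rfloor = \lfloor d/2 \rfloor$. The only bookkeeping to keep track of is that in the range of $n$ for which the pruning is carried out all of $n-1, n-3, \ldots, n-2p+1$ are positive, so the trees $U(n-2i+1)$ are defined; this is guaranteed because the recursion (\ref{eq:alphabetarecursion}) is only evaluated where its arguments are positive.
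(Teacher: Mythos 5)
Your proof is correct and follows essentially the same route as the paper: both count the trees $U(n-1),U(n-3),\ldots,U(n-2p+1)$ in which $Y$ is nonempty, using the observation that $Y$ is nonempty in $U(m)$ precisely when at most $d-1$ of the final labels have been removed, i.e.\ $2i-1<d$ iff $i\le\lfloor d/2\rfloor$. Your version merely makes the threshold explicit via the first label $a$ of $Y$, which is a harmless elaboration of the paper's argument.
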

\begin{proof}
  Clearly, if $\lfloor d/2 \rfloor > p$, then $p$ labels are deleted
  from $Y$ since $Y$ is nonempty on $U(n-2i+1)$ for $i \leq
  p$. Otherwise, since $2i-1 < d$ if and only if $i \leq \lfloor d/2
  \rfloor$, we will have at least one label on or after $Y$ for all
  the trees $U(n-2i+1)$ for $i \leq \lfloor d/2 \rfloor$. Thus $
  \lfloor d/2 \rfloor$ labels will be deleted from $Y$.
\end{proof}

We are now prepared to prove \tref{alphabeta}.

\begin{proof}
As discussed above, it suffices to show that ($\ref{eq:alphabetarecursion}$) has an $(\alpha, \beta)$-Conolly solution.

  First, we show that $M(n)$ is an $(\alpha,\beta)$-Conolly
  sequence. If $\alpha = 0, \beta = 1$, then $U$ is identical to the tree studied in
  \cite{JacksonRuskey}, where it is shown that $M(n)$ is the Conolly sequence. For $\alpha = 0$ and
  $\beta>1$, then every node holds $\beta$ times as many labels as in the tree in \cite{JacksonRuskey}, which multiplies the number of times each integer appears in the sequence $M(n)$ by $\beta$; the result is a
  $(0,\beta)$-Conolly sequence. Increasing or decreasing $\alpha$
  changes only the number of labels per leaf. This means that every integer
  will appear an additional $\alpha$ times (or $-\alpha$ fewer times),
  giving an $(\alpha, \beta)$-Conolly sequence.

Next, we show that
\begin{align*}
\SEQ{0}{1,3,5,\ldots,2p-1}{\gamma}{\gamma+1,\gamma+3,\gamma+5,\ldots,\gamma+2p-1},
\end{align*}
with initial conditions $M(1), \ldots, M(4\alpha + 5 \beta)$, is
$M(n)$. This will establish the theorem. Note that $4\alpha + 5 \beta$
is the last label on the $4^{th}$ leaf of $U$.

First we prove that for all $n>4\alpha + 5 \beta$,
$M(n-\sum_{j=1}^pM(n-2j+1))$ counts the number of left leaves of
$U(n)$.  Recall that the above definition of pruning transforms the
tree $U(n)$ into the tree $ U(n-\sum_{j=1}^{j=p}M(n-2j+1))$ (up to
renumbering of the labels). Thus, $M(n-\sum_{j=1}^pM(n-2j+1))$ counts
the number of nonempty leaves of $U(n-\sum_{j=1}^{j=p}M(n-2j+1))$. We
exhibit a bijection between the nonempty leaves of
$U(n-\sum_{j=1}^{j=p}M(n-2j+1))$ and the nonempty \textit{left} leaves
of $U(n)$; equivalently, we show that in the pruning process that
transforms $U(n)$ into $U(n-\sum_{j=1}^{j=p}M(n-2j+1))$, a given
penultimate level node $X$ of $U(n)$ will be nonempty in
$U(n-\sum_{j=1}^{j=p}M(n-2j+1))$ if and only if $X$ has a nonempty
left leaf child in $U(n)$. To do so we consider 4 cases corresponding
to the position of the last label $n$ on $U(n)$. For any penultimate
node $X$ we write $Y$ (respectively, $Z$) for the left (respectively,
right) leaf child of $X$.

Case 1: The label $n$ is the $d^{th}$ label on the left leaf child $Y$
of $X$. In this case, we need to prove that $X$ ends the pruning
process with at least one label. By Lemma $\ref{lem:deletionstepcap}$,
the deletion step will remove at most $\lfloor d/2 \rfloor < d$ labels
from $Y$, so $Y$ will end the deletion step with at least one
label. Thus, in the lifting step at least one label shifts up from $Y$
to $X$, causing $X$ to have more than $\beta$ labels after the lifting
step. The correction step of the pruning process deletes the last
$\beta$ labels in preorder, and these labels will be on $X$. Since $X$
now has more than $\beta$ labels, it will end the pruning process with
at least one label.

Case 2: The label $n$ is the $d^{th}$ label on the right leaf child
$Z$ of $X$. Again, we need to prove that $X$ ends the pruning process
with at least one label. Note that $Y$ must be full since $Z$ is
nonempty. As in Case 1, by Lemma $\ref{lem:deletionstepcap}$ the
deletion step removes at most $\lfloor d/2 \rfloor < d$ labels from
$Z$, so we do not delete every label of $Z$. Furthermore, since there
are $\alpha+\beta+d$ labels on or after $Y$ and $d \leq \alpha + \beta$ labels on $Z$, there are at most
$\lfloor (\alpha + \beta + d)/2 \rfloor \leq \alpha + \beta$ labels
deleted from $Y$ during the deletion step. Since there is no label
deficit on $Y$, no labels need to be deleted from $X$. Therefore,
before the lifting step, $X$ will have $\beta$ labels and $Z$ will
have at least one label. Thus, after the lifting step, $X$ will have
more than $\beta$ labels. The correction step of the pruning process
deletes the last $\beta$ labels in preorder, and these labels will be
on $X$. Since $X$ now has more than $\beta$ labels, it will end the
pruning process with at least one label.

Case 3: The label $n$ is on $X$. Since $X$ has an empty left child, we
want to show that $X$ ends the pruning process without any labels. But
$n$ is the last label in preorder, and since $X$ has no children it
will not have any labels removed during the deletion step or added
during the lifting step. Thus, the $\beta$ labels that are removed
from the tree in the correction step of the pruning process will come
out of $X$ first. But $X$ has at most $\beta$ labels. Therefore, after
the pruning process, $X$ will have no labels.

Case 4: The label $n$ is after $Z$ in preorder. We show that after the
pruning process $X$ retains at least one label. To do so we must
consider 3 subcases.

Subcase 4.1: There are $d$ labels on $U(n)$ after the last label of
$Z$, where $1 \leq d < \beta$. Since a regular node contains up to
$\beta$ labels, it follows that these $d$ labels must all be on the
regular node, call it $X'$, that follows $Z$ in preorder; note that
$X'$ has no non-empty children.

In this case, there are exactly $\alpha+\beta+d$ labels in preorder
that are on or after $Z$ and $2(\alpha + \beta)+d$ labels in preorder
that are on or after $Y$. By Lemma $\ref{lem:deletionstepcap}$, the
number of labels deleted during the deletion step from $Z$ (resp. $Y$)
is $\lfloor (\alpha+\beta+d)/2 \rfloor = \alpha/2 + \lfloor (\beta +
d)/2 \rfloor$ (resp. $\lfloor (2(\alpha + \beta)+d)/2 \rfloor = \alpha
+ \beta +\lfloor d/2 \rfloor$). Further, since $X'$ has no children,
$X'$ will have no labels added or deleted during the deletion step and
the lifting step. In the correction step, we remove the last $\beta$
labels from $X'$ and $X$ in our transformed tree. We now count labels
carefully: when the pruning process began $U(n)$ had $2 \alpha + 3
\beta + d$ labels on or after $X$ (including the $\beta$ labels on
$X$).

From the calculations in the above paragraph, during the the deletion and correction steps we deleted $(\alpha/2 + \lfloor (\beta + d)/2 \rfloor)+(\alpha + \beta + \lfloor d/2 \rfloor) + \beta = \frac{3}{2}
\alpha + 2\beta + \lfloor (\beta +d)/2 \rfloor + \lfloor d/2 \rfloor$,
labels in total, leaving us with $\alpha/2 + \beta + d - \lfloor (\beta + d)/2 \rfloor
- \lfloor d/2 \rfloor \geq \alpha/2 +\beta + d -\beta/2 -d/2 -d/2 =
\alpha/2 + \beta/2 > 0$, where in the last step we use that $\alpha +
\beta > 0$ by assumption. Therefore, after the pruning process $X$ has
at least one label.

Subcase 4.2: There are $d$ labels on $U(n)$ after the last label of
$Z$, where $d \geq \beta$, and the next regular node $X'$ in preorder after $Z$ is not a penultimate level node. In the deletion step we remove at most $p = \alpha/2 +
\beta$ labels from $Y$ and from $Z$ (by Lemma $\ref{lem:deletionstepcap}$). If
$\alpha \geq 0$ then either at least
$\alpha/2$ labels are left on each of the leaves $Y$ and $Z$; these labels are lifted up to $X$ in the lifting step. If $\alpha <0$ then at most $-2\alpha/2 = -\alpha < \beta$ labels are removed from $X$. Either way, immediately prior to the correction step that removes $\beta$ labels from the tree, $X$ is nonempty.

By assumption, at the start of the pruning process there are $\beta$ labels on $X'$. Since $X'$ is neither a leaf node nor a penultimate level node, $X'$ still has $\beta$ labels immediately before the correction step. Since $X'$ is after $X$ in preorder, the $\beta$ labels on $X'$ are in line to be removed in the correction step before any of the labels in $X$. But only $\beta$ labels are removed in the correction step, hence none of these labels come from $X$. Thus at the end of the pruning process $X$ has at least one label.

Subcase 4.3: there are $d$ labels on $U(n)$ after the last label of $Z$, where $d \geq \beta$ and the next regular node $X'$ in preorder after $Z$ is a penultimate level node. As in Subcase $4.2$, immediately before the correction step, $X$ will be nonempty. Observe that since the next regular node after the penultimate level node $X$ is another penultimate level node $X'$, the next regular node after $X'$ in preorder cannot be a third penultimate level node. Therefore, the penultimate level node $X'$ must meet the requirements of one of Case $1,2, 4.1$ or $4.2$. By the arguments of those cases, the correction step will not remove every label from $X'$. Therefore, the correction step cannot remove any labels from $X$, so after the pruning process $X$ is nonempty.

We conclude that for all $n > 4 \alpha + 5 \beta$, $M(n-\sum_{j=1}^pM(n-2j+1))$ counts the number of left leaves of
$U(n)$. Since the leaves of $U(n)$, other than possibly the last, have exactly $\gamma =
\alpha+\beta$ labels, it follows that we can replace $n$ by $n-\gamma$ in the above argument to conclude that $n$ is on or after a right leaf in $U(n)$ if and only if $n-\gamma$ is on or after the sibling left leaf. This immediately implies
that $M(n-\gamma-\sum_{j=1}^pM(n-2j+1-\gamma))$ counts the number of
right leaves of $U(n)$. Thus, $M(n-\sum_{j=1}^pM(n-2j+1)) +
M(n-\gamma-\sum_{j=1}^pM(n-2j+1-\gamma))$ counts the total number of leaves of $U(n)$, which equals $M(n)$.
\end{proof}



This section concludes with two results that relate the solutions to
low-order recursions to those of higher order. Let $b$ be an integer sequence. Define the \emph{$m$-interleaving} of $b$ to be the sequence where each term of $b$ is repeated $m$ times. For example, if $b = 1,2,2,3,\ldots$ then the $3$-interleaving of $b$ is $1,1,1,2,2,2,2,2,2,3,3,3,\ldots$. We have new notation for this notion: if $b = b_1,b_2,
b_3, \ldots $ then let $b_1^{m}$ denote $b_1$
repeated $m$ times.  The $m$-interleaving of $b$ is thus written as
$b_1^m, b_2^m, b_3^m \ldots$.
\tref{interleave} says that for any order 1 recurrence
$B$, there is an order $m$ recursion such that the
$m$-interleavings of any solution to $B$ solves $A$. These $m$-interleavings are useful, since the $m$-interleaving of the $(\alpha,\beta)$-Conolly sequence is the $(m\alpha,m\beta)$-Conolly sequence.

\begin{theorem} (Order Multiplying Interleaving Theorem)
  Let $B = \SEQ{s}{a}{t}{b}[\xi_1,\xi_2,\ldots , \xi_c]$ be a
  recurrence relation with a well defined solution sequence.  Then for any
  integer $m > 1$ the recurrence
  \begin{align*}
    A = \SEQ{ms}{(ma)^m}{mt}{(mb)^m}[\xi_1^m,\xi_2^m,\ldots , \xi_c^m]
  \end{align*}
  also has a unique solution which is the $m$-interleaving of $B$,
  where superscript $m$ denotes multiplicity.
\label{thm:interleave}
\end{theorem}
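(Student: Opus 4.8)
The plan is to check directly that the $m$-interleaving of $B$ solves the recursion $A$, and then read off uniqueness from the fact that $A$ is a bona fide recursion (arguments strictly earlier and $\geq 1$) seeded by $mc$ initial conditions. Write $\overline{B}(n) := B(\lceil n/m\rceil)$ for the candidate; by construction $\overline{B}(i) = \xi_{\lceil i/m\rceil}$ for $1\le i\le mc$, so $\overline{B}$ agrees with the stated initial conditions $\xi_1^m,\ldots,\xi_c^m$. The single arithmetic fact I will lean on is the ceiling identity $\lceil (n-mx)/m\rceil = \lceil n/m\rceil - x$ for every integer $x$, which gives $\overline{B}(n-mx) = B(\lceil n/m\rceil - x)$.

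Next I fix $n > mc$ and set $q := \lceil n/m\rceil$, so $q > c$. Since all $m$ shift parameters in the first leaf of $A$ equal $ma$, we have $\sum_{j=1}^m \overline{B}(n-ma) = m\,\overline{B}(n-ma) = m\,B(q-a)$, hence the first argument is $n - ms - m\,B(q-a) = n - m\bigl(s + B(q-a)\bigr)$. Applying the ceiling identity a second time,
\[
\overline{B}\Bigl(n - ms - \sum_{j=1}^m \overline{B}(n-ma)\Bigr) = B\bigl(q - s - B(q-a)\bigr),
\]
and symmetrically the second summand equals $B\bigl(q - t - B(q-b)\bigr)$. Adding these and invoking the recursion for $B$ at index $q > c$ yields $B(q-s-B(q-a)) + B(q-t-B(q-b)) = B(q) = \overline{B}(n)$, which is exactly the recursion $A$ evaluated at $n$. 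So $\overline{B}$ is a solution of $A$.

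For uniqueness I verify that $A$ references only strictly earlier, already-defined terms on $n > mc$. Well-definedness of $B$ forces $c \ge \max(a,b)$ and, for all $q > c$, $1 \le q - s - B(q-a)$ and $1 \le q - t - B(q-b)$; it also forces $B$ to be positive (since $\xi_i>0$ and later values are sums of earlier ones). Translating through the ceiling identity and using the relation $m(q-1) < n \le mq$ between $n$ and $q=\lceil n/m\rceil$, the arguments $n - ms - m\overline{B}(n-ma)$ and $n - mt - m\overline{B}(n-mb)$ lie in $[1,\,n-1]$: the lower bound is $n - m(q-1) \ge 1$, and the upper bound follows from $ms + m\overline{B}(n-ma) > 0$ together with $n - ma \ge n - mc \ge 1$. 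Hence $A$ is a well-defined recursion whose solution is pinned down by its first $mc$ values, so that solution must be $\overline{B}$, the $m$-interleaving of $B$. I do not expect a genuine obstacle here; the only care needed is the ceiling bookkeeping and, in particular, confirming that each inequality guaranteeing well-definedness of $B$ upgrades verbatim to the corresponding one for $A$ via $m(q-1) < n$.
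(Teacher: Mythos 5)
Your proof is correct and follows essentially the same route as the paper: the paper phrases the identical computation as an induction on $n$ showing $A(mn-j)=B(n)$ for $0\le j<m$, which is just your ceiling-identity verification $\overline{B}(n-mx)=B(\lceil n/m\rceil-x)$ written in residue notation, together with the same positivity/well-definedness remarks ($B>0$, $s,t\ge 0$) to justify that only earlier terms are referenced. The only difference is organizational (formal satisfaction plus determinacy versus a single induction), not mathematical.
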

\begin{proof}
  We prove by induction on $n$ that $A(mn-j) = B(n)$ for $0\le j < m$.
  Observe that for $B$ to have a unique solution it must have at least
  one initial value $\xi_1$.  Thus for $n=1$ we have the base case, $A(m-j) =
  \xi_1 = B(1)$ and similarly for all $\xi_i$.
  Assume the theorem is true for values less than $n$. We show it holds for $n$:
\begin{eqnarray*}
  &&A\left( mn-j \right)\\
  & = & A\left( mn-ms-j-m A\left(mn-ma-j\right)\right) + A\left( mn-mt-j-m A\left(mn-mb-j\right)\right) \\
  & = & A\left( m\left(n-s-A\left(m\left(n-a\right)-j\right)\right)-j\right) + A\left( m\left(n-t-A\left(m\left(n-b\right)-j\right)\right)-j\right) \\
  & = & A\left( m\left(n-s-B\left(n-a\right)\right)-j\right) + A\left( m\left(n-t-B\left(n-b\right)\right)-j\right) \\
  & = & B\left( n-s-B\left(n-a\right) \right) + B\left( n-t-B\left(n-b\right)\right) \\
  & = & B\left( n \right)
\end{eqnarray*}
Note that for the induction to be valid we must have $s+B(n-a) > 0$
and $t+B(n-b)>0$.  This is guaranteed because we require that $B(n) >
0$ for all $n>0$, and $s, t \geq 0$.
\end{proof}



If $B$ is a slowly growing sequence and $A$ is the sequence resulting
from an application of \tref{interleave}, then certain perturbations
of the parameters of $A$ leave the solution unchanged. By taking $B$ to be a $(0,1)$-Conolly recursion, we can produce a variety of $(0,m)$-Conolly recursions by applying \tref{interleave} to $B$ and then perturbing the resulting recursion with \tref{perturb}. We could do the same with $(2,0)$-Conolly recursions, but in fact \tref{ceiling} provides a much stronger result in this case.

\begin{theorem}
  \label{thm:perturb}
  Let $B = \SEQ{s}{a}{t}{b}[\xi_1,\xi_2,\ldots,\xi_c]$
  be a slowly growing sequence.  Let $\alpha_1,\ldots,\alpha_m$ and
  $\beta_1,\ldots,\beta_m$ be integer constants that satisfy for all $1\leq i \leq m$
\begin{align} \label{eq:perturbcond}
    \begin{split}
    i-m &\le \alpha_i < i \text{ and}\\
    i-m &\le \beta_i < i.
   \end{split}
\end{align}
If the sequence
\begin{align}
 C = \SEQ{ms}{ma-\alpha_1,\ldots,ma-\alpha_m}{mt}{mb-\beta_1,\ldots,mb-\beta_m}[\xi_1^m,\xi_2^m,\ldots,\xi_c^m]
\end{align}
is well defined, then it is an $m$-interleaving of $B$.
\label{thm:perturb}
\end{theorem}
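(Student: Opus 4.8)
Following the same strong-induction scheme as in \tref{interleave}, the plan is to prove $C(n) = D(n)$ for all $n$, where $D$ is the $m$-interleaving of $B$, so that $D(\ell) = B(\lceil \ell/m\rceil)$; writing $n = mN - j$ with $0\le j < m$, this is the statement $C(mN-j) = B(N)$. The base case $1\le n\le mc$ holds by construction, since there $C(n) = \xi_{\lceil n/m\rceil} = B(\lceil n/m\rceil) = D(n)$. For $n = mN-j > mc$ I would expand $C(mN-j)$ via its recursion into $C(\mathrm{arg}_1) + C(\mathrm{arg}_2)$ with $\mathrm{arg}_1 = mN - j - ms - \Sigma_1$, $\mathrm{arg}_2 = mN - j - mt - \Sigma_2$, and $\Sigma_1 = \sum_{i=1}^{m} C(mN-j-ma+\alpha_i)$, $\Sigma_2 = \sum_{i=1}^{m} C(mN-j-mb+\beta_i)$. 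Because $a\ge 1$ forces $\alpha_i < i \le m \le ma$, every summand argument appearing in $\Sigma_1$ and $\Sigma_2$ is $< mN-j$; well-definedness of $C$ makes each such argument positive, so the induction hypothesis applies and its $C$-value equals its $D$-value, which is $\ge 1$ since $B$ is positive. Consequently $\Sigma_1,\Sigma_2 \ge m$, so $\mathrm{arg}_1, \mathrm{arg}_2 \le mN-j-m < mN-j$, while $\mathrm{arg}_1,\mathrm{arg}_2 \ge 1$ again by well-definedness; thus the induction hypothesis applies at $\mathrm{arg}_1$ and $\mathrm{arg}_2$ too.

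The decisive step is to determine the block of $D$ into which $\mathrm{arg}_1$ falls. Using $D(mM'+v) = B(M'+\lceil v/m\rceil)$ (valid for all integers $M',v$), I would rewrite $\Sigma_1 = \sum_{i=1}^{m} B(N-a+c_i)$ with $c_i := \lceil(\alpha_i-j)/m\rceil$. The hypotheses $i-m\le\alpha_i\le i-1$ and $0\le j\le m-1$ give $c_i\in\{-1,0,1\}$, and crucially: $c_i=1$ forces $\alpha_i\ge j+1$, hence $i\ge j+2$; and $c_i=-1$ forces $\alpha_i\le j-m$, hence $i\le j$. Setting $p_+ = \#\{i:c_i=1\}$ and $p_- = \#\{i:c_i=-1\}$, this yields $p_+\le m-1-j$ and $p_-\le j$. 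Since $B$ is slowly growing, $B(N-a+1)=B(N-a)+\varepsilon_+$ and $B(N-a-1)=B(N-a)-\varepsilon_-$ with $\varepsilon_\pm\in\{0,1\}$, so $\Sigma_1 = mB(N-a)+\delta$ where $\delta=p_+\varepsilon_+-p_-\varepsilon_-$ satisfies $-j\le -p_-\le\delta\le p_+\le m-1-j$. Therefore $j+\Sigma_1 = mB(N-a)+(j+\delta)$ with $0\le j+\delta<m$, so $\lfloor(j+\Sigma_1)/m\rfloor = B(N-a)$, and consequently $C(\mathrm{arg}_1) = D\bigl(m(N-s)-(j+\Sigma_1)\bigr) = B\bigl(N-s-B(N-a)\bigr)$.

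The identical computation with $(t,b,\beta_i)$ replacing $(s,a,\alpha_i)$ gives $C(\mathrm{arg}_2) = B\bigl(N-t-B(N-b)\bigr)$, and then the recursion for $B$ closes the induction: $C(mN-j) = B(N-s-B(N-a)) + B(N-t-B(N-b)) = B(N) = D(mN-j)$. I expect the counting step to be the main obstacle: one must show that perturbations restricted to $i-m\le\alpha_i<i$ can lift at most $m-1-j$ of the $m$ interleaved sub-values into the next block and drop at most $j$ of them into the previous block, which is exactly what confines the aggregate correction $j+\delta$ to a single block $[0,m)$; this is precisely where condition \eref{perturbcond} is used at full strength. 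A secondary point is that "$C$ well defined" must be invoked throughout to keep all arguments positive — for instance to rule out a reference to $B(0)$ when some $c_i=-1$, which in fact cannot occur because well-definedness then forces $N-a\ge 1$.
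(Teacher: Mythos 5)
Your proof is correct, and its heart is the same as the paper's: the counting argument showing that condition \eref{perturbcond}, combined with the slowness of $B$, permits at most $m-1-j$ of the perturbed inner terms to jump up one $m$-block and at most $j$ to drop down one, so the aggregate argument stays in the same $m$-block as the unperturbed one. The only difference is packaging — you run a self-contained strong induction proving $C$ equals the $m$-interleaving and reduce directly to the recursion for $B$, whereas the paper proves the corresponding identity for the interleaved sequence $A$ and then invokes \tref{interleave} — which does not change the substance of the argument.
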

\begin{proof}
 Let $A$ be the $m$-interleaving of $B$. We prove that
  \begin{align}
    \label{eq:interalpha}
    A\left(n-ms - \sum_{i=1}^m A(n - ma + \alpha_i)\right) = A(n-ms - mA(n-ma)).
  \end{align}
  and note that a similar argument works for
    \begin{align*}
    A\left(n-mt - \sum_{i=1}^m A(n - mb + \beta_i)\right) = A(n-mt - mA(n-mb)).
  \end{align*}
  This will prove that $A = \SEQ{ms}{ma-\alpha_1,\ldots,ma-\alpha_m}{mt}{mb-\beta_1,\ldots,mb-\beta_m}[\xi_1^m,\xi_2^m,\ldots,\xi_c^m]$, by Theorem \ref{thm:interleave}.
  Note that insisting $C$ be well defined is simply a requirement that
  $c$ is large enough so that $n-ma +\alpha_i > 0$ whenever $n>mc$.

  The intuition behind this proof is that for each term referenced by $B$, the
  corresponding terms referenced by $C$ lie in some $m$-interval, that is, they belong to $[km+1,(k+1)m]$ for some positive integer $k$. This provides flexibility in the parameters of $C$.

  Let $n>mc$ be given, and set $j$ such that $1 \leq j \leq m$ and $j = n$ mod $m$. Now, observe that as $A$ is an $m$-interleaving, its values can only change at multiples of $m$, that is, if $A(n) \neq A(n+1)$, then $n = mz$ for some integer $z$. This means that in order to show (\ref{eq:interalpha}), it suffices to show that $n-ms-\sum_{i=1}^m A(n-ma + \alpha_i)$ lies in the same $m$-interval as $n-ms-m(A(n-ma))$. Observe that since $j,n$ and $n-ms-m(A(n-ma))$ are all equal mod $m$, the left and right endpoints of the $m$-interval containing $n-ms-\sum_{i=1}^m A(n-ma + \alpha_i)$ must be $n-ms-m(A(n-ma))-(j-1)$ and $n-ms-m(A(n-ma))-j+m$. Thus we infer the following inequalities:

  \begin{align}
  \label{eq:variation}
   - mA(n-ma) - (j-1) \le -\sum_{i=1}^m A(n-ma + \alpha_i) \leq -mA(n-ma) + (m-j)
  \end{align}

  To do so, we first observe that since $i-m \leq \alpha_i < i$,
  adding $\alpha_i$ can only either move back one $m$-interval, not
  change the $m$-interval, or move forward one $m$-interval. Since
  $B(n)$ is slowly growing, this means that $A_i := A(n-ma+\alpha_i) -
  A(n-ma) = 0,1$, or $-1$. We will show that $A_i$ is $1$ at most $j-1$
  times and $-1$ at most $m-j$ times, thereby establishing
  (\ref{eq:variation}).

  In order to show this, first observe that if $n$ is in the interval
  $[km+1,km+m]$, then $A_i = 1$ only if $n+\alpha_i$ is in the
  following interval $[km+m+1,km+2m]$, and $A_i = -1$ only if
  $n+\alpha_i$ is in the preceding interval $[km-m+1,km]$. By the
  definition of $j$, $n=km+j$ so $n+\alpha_i$ is in the interval
  $[km+m+1,km+2m]$ if and only if $j+ \alpha_i > m$, and $n+\alpha_i$
  is in the interval $[km-m+1,km]$ if and only if $j+\alpha_i \leq 0$.

  Thus, the number of $i$ with $A_i=1$ is at most the number of $i$
  with $j+\alpha_i>m$, and the number of $i$ with $A_i = -1$ is at
  most the number of $i$ with $j+\alpha_i \leq 0$. So it suffices to
  show that at most $j-1$ of the $\alpha_i$ satisfy $j+\alpha_i > m$,
  and at most $m-j$ of the $\alpha_i$ satisfy $j+\alpha_i \leq 0$.

  If $j+\alpha_i > m$, then $\alpha_i > m-j$, so since $\alpha_i < i$,
  this can only be true for at most the $j-1$ indices
  ${m-j+2},{m-j+3},\ldots,{m}$. Similarly, if $j+\alpha_i \leq 0$,
  then since $\alpha_i \geq i-m$, it follows that $i-m \leq \alpha_i
  \leq -j$ so $i \leq m-j$, which means $i = 1,2,\ldots,m-j$, for a
  total of only $m-j$ values. This gives us the desired bound on the
  number of indices $i$ with $A_i = 1$ or $-1$, completing the proof.

\end{proof}

\section{Enumerating Conolly-Like Recursions} \label{sec:Ceiling}

Up to this point we have focused on showing that specific recursions are $(\alpha,
\beta)$-Conolly. Here we turn to the following question: for a given $(\alpha,\beta)$, what is the complete list of $(\alpha,\beta)$-Conolly recursions of the form (\ref{eq:general2})?

Based on our experimental evidence described in Section $\ref{sec:Exp}$, we believe that for each possible
$(\alpha,\beta)$ pair with $\beta>0$ there are finitely many (and at least 1) $2$-ary $(\alpha,\beta)$-Conolly recursions. Below we illustrate this hypothesis in the case $p=2$ (see Conjecture \ref{conj:order2}), where we list what we believe are all the $2$-ary recursions. Note that the $(\alpha,\beta)$ pairs we list in Conjecture \ref{conj:order2} appear in Table \ref{tbl:Pairs}; in the tables in Conjecture \ref{conj:order2} the set notation denotes that parameters can be chosen from the Cartesian product of the sets, while the right hand column in the tables is simply the size of that product.

\begin{conjecture}
\label{conj:order2}
 For $\beta > 0$, the only $2$-ary, order $2$ $(\alpha,\beta)$-Conolly recurrences are:

For ($\alpha, \beta)=(-2,3)$: \\
\begin{center}
\begin{tabular}{l|r}
Recurrence & number \\ \hline
$\SEQ{0}{1,3}{1}{2,4}$ & 1 \\
$\SEQ{0}{2,3}{3}{4,\{7,8,9\}}$ & 3 \\
$\SEQ{0}{\{2,3,4\},\{4,5,6\}}{3}{\{2,3\},9}$ & 18 \\
$\SEQ{0}{\{2,3,4\},\{4,5,6\}}{5}{\{7,8,9\},\{9,10,11\}}$ & 81 \\
\end{tabular}
\end{center}

For ($\alpha, \beta)=(0,2)$: \\
\begin{center}
\begin{tabular}{l|r}
Recurrence & number \\ \hline
$\SEQ{0}{\{1,2\},\{2,3\}}{1}{1,4}$ & 4 \\
$\SEQ{0}{\{1,2\},\{2,3\}}{2}{\{3,4\},\{4,5\}}$ & 16 \\
$\SEQ{0}{\{3,4\},\{4,5\}}{4}{3,10}$ & 4 \\
$\SEQ{0}{\{3,4\},\{4,5\}}{6}{\{9,10\},\{10,11\}}$ & 16 \\
\end{tabular}
\end{center}

For ($\alpha, \beta)=(2,1)$: \\
\begin{center}
\begin{tabular}{l|r}
Recurrence & number \\ \hline
$\SEQ{0}{1,1}{1}{2,2}$ & 1 \\
$\SEQ{0}{\{1,2\},\{2,3\}}{1}{1,2}$ & 4 \\
$\SEQ{0}{\{1,2\},\{2,3\}}{2}{1,\{5,6\}}$ & 8 \\
$\SEQ{0}{\{1,2\},\{2,3\}}{2}{2,\{4,5\}}$ & 8 \\
$\SEQ{0}{\{1,2\},\{2,3\}}{3}{\{4,5\},\{5,6\}}$ & 16 \\
\end{tabular}
\end{center}

\end{conjecture}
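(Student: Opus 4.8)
The plan is to prove \conref{order2} in two independent parts: a \emph{soundness} part, verifying that every recursion listed in the three tables has the stated $(\alpha,\beta)$-Conolly sequence as its solution when seeded with the first $20$ terms of that sequence, and a \emph{completeness} part, verifying that for $(\alpha,\beta)\in\{(-2,3),(0,2),(2,1)\}$ no $2$-ary order-$2$ recursion $\SEQ{s}{a,b}{t}{c,d}$ outside those families is $(\alpha,\beta)$-Conolly. I expect completeness to be the genuine obstacle; the soundness part is lengthy but should follow by extending machinery already developed in the paper.

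For soundness I would proceed family by family. \tref{alphabeta} already supplies $\SEQ{0}{1,3}{\gamma}{\gamma+1,\gamma+3}$ with $\gamma=\alpha+\beta$, which accounts for $\SEQ{0}{1,3}{1}{2,4}$ in the $(-2,3)$ table and for a corner of the largest Cartesian product in each of the other two tables. For $(\alpha,\beta)=(0,2)$ the target sequence is the $2$-interleaving of the Conolly sequence, so I would run the two known order-$1$ $(0,1)$-Conolly recursions from \sref{Order1} through \tref{interleave} with $m=2$ and then apply \tref{perturb} (taking each $\alpha_i,\beta_i\in\{-1,0,1\}$) to sweep out the $\{\cdot\}$ choices, checking that the four $(0,2)$-rows are exactly what is produced and that well-definedness holds once the $20$ initial conditions are in place. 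For $(2,1)$ and $(-2,3)$ the targets are not interleavings, so the natural route is to prove a \emph{perturbation lemma for the tree $U$} of \sref{AnyOrder}: one revisits the deletion, lifting, and correction steps in the proof of \tref{alphabeta} and shows that shifting the argument offsets $1,3$ and $\gamma+1,\gamma+3$ (and correspondingly $t$) within the ranges appearing in the tables still transforms $U(n)$ into $U(n-\sum_j M(n-2j+1))$, since the pruning depends only on which leaf-cell or which block an argument falls into and these perturbations stay inside the tolerance measured by \lref{deletionstepcap}. Combined with the identification of $M(n)$ as the $(\alpha,\beta)$-Conolly sequence from that same proof, this would handle the remaining rows; the degenerate row $\SEQ{0}{1,1}{1}{2,2}$ for $(2,1)$ (collapsing the leaves of $U$) can be checked directly.

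The completeness part is where a new idea is needed, and it is the reason the statement is only conjectural. The search described in \sref{Exp} rules out recursions only within a bounded window ($s=0$, $t\le 10$, $b\le 12$, $d\le 30$), so a proof must: (i) show $s>0$ is impossible, plausibly via a monotonicity obstruction, since a positive $s$ depresses the left argument $n-s-A(n-a)-A(n-b)$ in a way incompatible with the frequency profile $\alpha+\beta r_m$ forced by \cref{abConditions}; (ii) produce an explicit a priori bound on $t,a,b,c,d$ in terms of $\alpha$ and $\beta$, reducing the question to a finite check; and (iii) make that finite check rigorous --- the tree/pruning technique certifies the surviving candidates forever, but one still needs a proof that a non-member agreeing with the target for the first $1000$ terms must \emph{eventually} fail, not merely a computed disagreement. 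Step (ii) is the crux: I would seek a second-order refinement of \cref{abConditions}, tracking the $O(\log n)$ deviation of $A(n)$ from $n/(2p)$ near the positions $n_h$ at which $A$ last attains the value $h$, and showing that if some offset among $a,b,c,d$ exceeds a suitable explicit function of $\alpha,\beta$ then the two summands of the recurrence straddle incompatible frequency blocks infinitely often, forcing $A(n)\neq A(n-s-A(n-a)-A(n-b))+A(n-t-A(n-c)-A(n-d))$. I do not see how to make step (ii) unconditional with the tools available here; absent it, one can honestly establish only the soundness half together with completeness restricted to the search window, which is exactly the gap that keeps \conref{order2} a conjecture.
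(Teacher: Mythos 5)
The statement you were asked about is \conref{order2}, and the paper offers no proof of it: it is presented purely as a conjecture distilled from the computer search described in \sref{Exp} (the window $s=0$, $t\le 10$, $a\le b\le 12$, $c\le d\le 30$, matching against $1000$ terms), and the concluding section explicitly lists ``find a way to show that all of the 180 recursions listed in Conjecture \ref{conj:order2} are indeed Conolly-like'' as open future work. So your overall assessment is aligned with the paper: the completeness half is exactly the part nobody can currently do, your diagnosis of why (no a priori bound on the parameters, no way to promote a $1000$-term agreement into a proof or an eventual disagreement into a certified one) is the right one, and your plan correctly does not claim to close it.

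Two cautions about your soundness half, which you present as the routine part. First, even this half is not established in the paper, and your route through \tref{interleave} and \tref{perturb} covers less of the $(0,2)$ table than you assert: \tref{interleave} forces the shift parameters to become $ms$ and $mt$, and \tref{perturb} perturbs only the $a_i$ and $b_i$, never $s$ or $t$. Starting from the two known $(0,1)$-Conolly recursions $\SEQ{0}{1}{1}{2}$ and $\SEQ{0}{2}{3}{5}$ with $m=2$ you obtain precisely the rows $\SEQ{0}{\{1,2\},\{2,3\}}{2}{\{3,4\},\{4,5\}}$ and $\SEQ{0}{\{3,4\},\{4,5\}}{6}{\{9,10\},\{10,11\}}$; the rows with $t=1$ and $t=4$, namely $\SEQ{0}{\{1,2\},\{2,3\}}{1}{1,4}$ and $\SEQ{0}{\{3,4\},\{4,5\}}{4}{3,10}$, cannot arise this way, so ``the four $(0,2)$-rows are exactly what is produced'' is false and those rows need a separate argument. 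Second, the ``perturbation lemma for the tree $U$'' that you invoke for the $(2,1)$ and $(-2,3)$ families is genuinely new machinery, not a corollary of \lref{deletionstepcap}: the pruning argument in the proof of \tref{alphabeta} uses the specific offsets $1,3,\ldots,2p-1$ and $\gamma+1,\ldots,\gamma+2p-1$ to align deletions with the trees $U(n-2j+1)$ and their $\gamma$-shifts, and shifting these offsets changes which tree a given leaf is nonempty in, so the case analysis (Cases 1--4 and the subcases) would have to be redone rather than merely tolerated. In short, your proposal is an honest and reasonable research plan whose conditional status matches the paper's, but both halves --- not only completeness --- remain unproven, and the specific claim about sweeping out the $(0,2)$ rows by interleaving and perturbation is incorrect as stated.
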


When $\beta = 0$, the situation is very different. We have the following result that holds for recursions of the form ($\ref{eq:generalRecurrence}$) with arbitrary arity $k$ and any values of the parameters $p_i$:

\begin{theorem}
\label{thm:alphazeroconollylike}
For all $\alpha > 0$, there are either no  $(\alpha, 0)$-Conolly meta-Fibonacci recursions of the form ($\ref{eq:generalRecurrence}$), or there are infinitely many.
\end{theorem}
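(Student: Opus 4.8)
The plan is to prove the dichotomy by showing that the existence of a single $(\alpha,0)$-Conolly recursion of the form \eref{generalRecurrence} forces the existence of infinitely many. The key preliminary observation is that an $(\alpha,0)$-Conolly sequence is completely explicit: its frequency sequence is the constant $\alpha$, so every positive integer occurs exactly $\alpha$ times and the sequence is $\cln{n}{\alpha}$. The only property of this sequence we will need is the ceiling identity $\cln{m-t\alpha}{\alpha} = \cln{m}{\alpha} - t$, valid for every integer $t \ge 0$.

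So suppose some recursion $R(n) = \sum_{i=1}^k R\!\left(n - s_i - \sum_{j=1}^{p_i} R(n-a_{ij})\right)$ of the form \eref{generalRecurrence}, together with suitable initial conditions, has $\cln{n}{\alpha}$ as its solution. For each integer $t \ge 1$, let $R_t$ be the recursion obtained from $R$ by replacing the single parameter $a_{11}$ with $a_{11}+t\alpha$ and the single parameter $s_1$ with $s_1 + t$, leaving all other parameters (in particular $k$ and every $p_i$) unchanged. Each $R_t$ is still of the form \eref{generalRecurrence}, since $a_{11}+t\alpha$ is a positive integer and $s_1+t$ is a nonnegative integer.

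I would then verify that $\cln{n}{\alpha}$ also solves $R_t$. Writing $A(n) := \cln{n}{\alpha}$ and substituting $A$ into the right-hand side of $R_t$, every summand with index $i \ge 2$ is literally the same as in $R$, while the inner argument of the first summand becomes
\[
n - (s_1+t) - A(n-a_{11}-t\alpha) - \sum_{j=2}^{p_1} A(n-a_{1j})
= n - s_1 - \sum_{j=1}^{p_1} A(n-a_{1j}),
\]
where the equality uses the ceiling identity with $m = n - a_{11}$ to cancel the two occurrences of $t$. Hence the right-hand side of $R_t$ evaluated at $A$ agrees with that of $R$, so it equals $A(n)$ for all sufficiently large $n$. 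Since the inner arguments of $R_t$ (evaluated along $A$) are the very same positive integers as those of $R$, the recursion $R_t$, seeded with enough initial values of $\cln{n}{\alpha}$, is well defined and has $\cln{n}{\alpha}$ as its unique solution; that is, $R_t$ is $(\alpha,0)$-Conolly.

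Finally, the recursions $R_1, R_2, R_3, \ldots$ are pairwise distinct because the value of $a_{11}$ in $R_t$ is $a_{11}+t\alpha$, which is strictly increasing in $t$ (here $\alpha > 0$ is used). This produces infinitely many $(\alpha,0)$-Conolly recursions of the form \eref{generalRecurrence}, and in fact, since the construction preserves $k$ and all the $p_i$, it establishes the stronger ``for fixed $\alpha, k, p_i$'' form of the dichotomy mentioned above. There is no genuine obstacle here; the only matters requiring care are checking that the $R_t$ remain legitimate instances of \eref{generalRecurrence} and making explicit how many initial conditions $R_t$ needs in order to have $\cln{n}{\alpha}$ as its \emph{unique} solution (enlarging $a_{11}$ merely calls for a few more seed values). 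The sharper statement --- an exact characterization of the $(\alpha,0)$-Conolly recursions via a ceiling-function identity --- is what \tref{ceiling} provides.
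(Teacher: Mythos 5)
Your proposal is correct and uses essentially the same idea as the paper: a compensated shift of parameters justified by the identity $\cln{m-t\alpha}{\alpha}=\cln{m}{\alpha}-t$, so that $\cln{n}{\alpha}$ still satisfies the modified recursion. The paper's proof shifts every $a_{ij}$ by $\alpha$ while adding $p_i$ to $s_i$ and then iterates the construction, whereas you shift only $a_{11}$ by $t\alpha$ and add $t$ to $s_1$, which is just a minor variant of the same argument.
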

\begin{proof}
Suppose that for some given $\alpha$ and set of parameters the recursion
\begin{align}
R(n) = \sum_{i=1}^k R\left(n-s_i-\sum_{j=1}^{p_i} R(n-a_{ij})\right)
\label{eq:alphazeroconollylike}
\end{align}
together with $c$ initial values has an $(\alpha,0)$-Conolly solution. Since the $(\alpha, 0)$-Conolly solution sequence assumes the value of each positive integer $\alpha$ times in order, it must be the sequence $\lceil n/\alpha \rceil$. Thus, $R(n) = \lceil n/\alpha \rceil$ satisfies ($\ref{eq:alphazeroconollylike}$), the $c$ initial values must be the first $c$ values of the sequence $\lceil n/\alpha \rceil$, and for $n>c$ we know that the arguments $n-a_{ij}>0$ and $n > n-s_i-\sum_{j=1}^{p_i} \lceil \frac{n-a_{ij}}{\alpha}\rceil > 0$.

Now we define a new, related recurrence with an $(\alpha,0)$-Conolly
solution as follows:  Set
\begin{align}
P(n) = \sum_{i=1}^k P\left(n-s_i-p_i-\sum_{j=1}^{p_i} P(n-a_{ij}-\alpha)\right),
\label{eq:shiftedalphazero}
\end{align}
with $c+\alpha$ initial values which
we take to be the first $c+\alpha$ values of the sequence
$\lceil n/\alpha \rceil$. Observe that from the discussion in the paragraph above $P(n-a_{ij}-\alpha)$ is well-defined. We show below that in fact for $n>c+\alpha$ all the arguments of $P$ on the right hand side of (\ref{eq:shiftedalphazero}) are positive so the terms are well-defined.

We now show by induction that for all $n$,
$P(n) = \lceil n/\alpha \rceil$. By our assumption for the initial conditions we have the base case.
Assume that our hypothesis is true up to $n-1$. Then

\begin{align*}
  &\sum_{i=1}^k P\left(n-s_i-p_i-\sum_{j=1}^{p_i}
    P(n-a_{ij}-\alpha)\right) = \sum_{i=1}^k
  P\left(n-s_i-p_i-\sum_{j=1}^{p_i} (P(n-a_{ij})-1)\right)\\
  =& \sum_{i=1}^k P\left(n-s_i-p_i+p_i-\sum_{j=1}^{p_i}
    P(n-a_{ij})\right) = \sum_{i=1}^k P\left(n-s_i-\sum_{j=1}^{p_i}
    P(n-a_{ij})\right) = \cln{n}{\alpha}.
\end{align*}
The last equality holds by our assumption
that ($\ref{eq:alphazeroconollylike}$) has solution sequence $\lceil
n/\alpha \rceil$ and by the fact that $n>n-s_i-\sum_{j=1}^{p_i}
P(n-a_{ij})>0$ since $P(n-a_{ij}) = \lceil
\frac{n-a_{ij}}{\alpha}\rceil$. This shows that $\lceil n/\alpha
\rceil$ solves ($\ref{eq:shiftedalphazero}$).

This proves that $P(n)$ has an $(\alpha,0)$-Conolly solution. We could repeat this process, replacing $R(n)$ with $P(n)$, and hence construct infinitely many different recursions with an $(\alpha, 0)$-Conolly solution, thereby proving the desired result.

\end{proof}

For example, in \cite{BLT} it is shown that $\SEQ{0}{1}{2}{3}$ is $(2,0)$-Conolly; by the above theorem we deduce that so too are $\SEQ{1}{3}{3}{5}$, $\SEQ{2}{5}{4}{7}$ and in general, $\SEQ{x}{2x+1}{x+2}{2x+3}$ for any $x \geq 0$.

We are now better equipped to return to the case ($\alpha, \beta)=(4,0)$, the only ($\alpha, \beta)$ pair not yet covered in our discussion of $2$-ary recursions with $p=2$. Applying Theorems \ref{thm:alphabeta} and $\ref{thm:alphazeroconollylike}$ we conclude that there are an infinite number of $(4,0)$-Conolly recursions.

It is evident that we can use the same reasoning as above for $p=2$ to show that there are an infinite number of $(2p,0)$-Conolly $2$-ary recursions for any $p$. In Theorem $\ref{thm:ceiling}$ following, we substantially improve this result by providing a complete list of all such recursions.

Central to Theorem $\ref{thm:ceiling}$ is the observation that since the $(\alpha,0)$-Conolly sequences have (by definition) constant frequency sequences with value
$\alpha$, they are equal to $\cln{n}{\alpha}$.  By \cref{abConditions}, if $H$ is of the form (\ref{eq:general2}) with an $(\alpha,0)$-Conolly
solution, then $\alpha=2p$. In what follows we provide necessary and
sufficient conditions for the sequence $\cln{n}{2p}$ to be the
solution of $H$.

For technical reasons, we need to distinguish between the property that a meta-Fibonacci recursion $A(n)$ with given initial conditions generates $B(n)$ as its (unique) solution sequence via a recursive calculation, and the property that the sequence $B(n)$ \emph{formally satisfies} the recursion $A(n)$, by which we mean that for all $n$, $B(n)$ satisfies the equation that defines $A(n)$. An example makes this distinction clearer: consider the recursion $R=\SEQ{-1}{-1}{2}{3}$. $R$ is formally satisfied by the sequence $\cln{n}{2}$, but $\cln{n}{2}$ is not the solution sequence to $R$ (for any set of initial conditions) because the recursion $R(n) = R(n+1-R(n+1))-R(n-2-R(n-3))$ will always require that we know the term $R(n+1)$ to calculate $R(n)$.

Note that in the example above the recursion $R$ has some negative parameters, a situation that we don't normally permit. As we will see in Corollary \ref{cor:produceceiling} no such example is possible without negative parameters.

Our strategy for classifying $(\alpha, 0)$-Conolly sequences takes advantage of this distinction. We first show that the $(\alpha,0)$-Conolly sequence formally satisfies a $2$-ary recursion if and only if that recursion's parameters meet a certain set of conditions described in Theorem \ref{thm:ceiling} below. Then we show that formal satisfaction is equivalent to generating the sequence as the solution to the recurrence so long as we provide sufficiently many initial conditions that match the ceiling function.

For any integer $z$, let $\quo{z}$ and $\rem{z}$ be the quotient and remainder mod $2p$, so that $z = 2p\quo{z} + \rem{z}$ with $0\le \rem{z} < 2p$.

\begin{theorem}
  \label{thm:ceiling}
  The $2$-ary, order $p$ meta-Fibonacci recurrence relation
  \begin{align*}
    H(n) = H\left(n-s - \sum_{i=1}^p H(n-a_i) \right) + H\left(n-t -
      \sum_{i=1}^p H(n-b_i) \right)
  \end{align*}
  is formally satisfied by the sequence $\cln{n}{2p}$ for all integers $n$ if
  and only if the parameters satisfy
  \begin{enumerate}
  \item For each integer $j$ in $\{0,1,\ldots,p-1\}$, at most $j$ of
    the $\rem{a}_i$s satisfy $\rem{a}_i \le j$ and at most $j$ of them satisfy
    $\rem{a}_i \ge 2p - j$.
  \item For each integer $j$ in $\{0,1,\ldots,p-1\}$, at most $j$ of
    the $\rem{b}_i$s satisfy $\rem{b}_i \le j$ and at most $j$ of them satisfy
    $\rem{b}_i \ge 2p - j$.
  \item There exists an integer $d$ such that either $-s + \sum_{i=1}^p
    \quo{a}_i = 2pd$ and $-t + \sum_{i=1}^p \quo{b}_i = -2pd -p$, or $-s +
    \sum_{i=1}^p \quo{a}_i = -2pd-p$ and $-t + \sum_{i=1}^p \quo{b}_i = 2pd$.
  \end{enumerate}
\end{theorem}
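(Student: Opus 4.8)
The plan is to reduce ``$\cln{n}{2p}$ formally satisfies $H$'' to a finite list of arithmetic identities and then to recognize conditions (1)--(3) as exactly the requirements for those identities to hold. Write $c(n)=\cln{n}{2p}$ for short, so that $c(z)=\quo{z}+[\rem{z}\ge 1]$ (Iverson bracket) and $c(z+2pm)=c(z)+m$. From $n-a_i=2p(\quo{n}-\quo{a}_i)+(\rem{n}-\rem{a}_i)$ one gets $c(n-a_i)=\quo{n}-\quo{a}_i+[\rem{n}>\rem{a}_i]$, hence
\[
 n-s-\sum_{i=1}^p c(n-a_i)=p\,\quo{n}+\kappa_a+\varphi_a(\rem{n}),\qquad \kappa_a:=-s+\sum_{i=1}^p\quo{a}_i,\quad \varphi_a(r):=r-\#\{i:\rem{a}_i<r\},
\]
and likewise for the second summand with $t,\kappa_b,\varphi_b$. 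Writing $\quo{n}=2m+\rho$ with $\rho\in\{0,1\}$, and using $c(n)=2m+\rho+[\rem{n}\ge 1]$, the equation $c(n)=c(\,\cdot\,)+c(\,\cdot\,)$ collapses (the $2m$ cancels) to the family of identities
\[
 \rho+[r\ge 1]=c\bigl(p\,\rho+\kappa_a+\varphi_a(r)\bigr)+c\bigl(p\,\rho+\kappa_b+\varphi_b(r)\bigr),\qquad \rho\in\{0,1\},\ r\in\{0,1,\dots,2p-1\},
\]
and formal satisfaction for all integers $n$ is equivalent to these holding for each of these finitely many $(\rho,r)$. I will lean on the trivial envelope $r-p\le\varphi_a(r)\le r$, on $\varphi_a(0)=0$ and $\varphi_a(1)=1-\#\{i:\rem{a}_i=0\}\le 1$, and on two translations: condition (1) is \emph{equivalent} to $\varphi_a(r)\in\{1,\dots,p\}$ for all $r\in\{1,\dots,2p-1\}$ (its first half is precisely $\varphi_a\ge 1$ on $\{1,\dots,p\}$, its second half is $\varphi_a\le p$ on $\{p+1,\dots,2p\}$, and the complementary bounds come for free); likewise condition (2) $\Leftrightarrow$ $\varphi_b(r)\in\{1,\dots,p\}$ on $\{1,\dots,2p-1\}$; and, writing $\kappa_a=2p\,\quo{\kappa_a}+\rem{\kappa_a}$ with $0\le\rem{\kappa_a}<2p$ (similarly for $\kappa_b$), condition (3) $\Leftrightarrow$ $\{\rem{\kappa_a},\rem{\kappa_b}\}=\{0,p\}$ together with $\quo{\kappa_a}+\quo{\kappa_b}=-1$.

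Sufficiency is then the easy half: assuming (1)--(3), the $\varphi$-values lie in $\{1,\dots,p\}$ for $r\ge 1$ and vanish at $r=0$, while $\kappa_a,\kappa_b$ are pinned down modulo $2p$, so every argument of $c$ in the reduced identities lands in an interval on which $c$ is forced to a single value; I would just evaluate each ceiling and check the identities, which is routine casework on $\rho$ and on $r=0$ versus $r\ge 1$.

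For necessity, assume all the reduced identities hold. First put $r=0$: since $\varphi_a(0)=\varphi_b(0)=0$, the two identities ($\rho=0$ and $\rho=1$) become equations in $\kappa_a,\kappa_b$ alone, and reducing modulo $2p$ pins $(\rem{\kappa_a},\rem{\kappa_b})$ and $\quo{\kappa_a}+\quo{\kappa_b}$ down to exactly four possibilities. Next put $r=1$: because $\varphi_a(1),\varphi_b(1)\le 1$, two of those possibilities become contradictory, and in the surviving two the $r=1$ identities force $\#\{i:\rem{a}_i=0\}=\#\{i:\rem{b}_i=0\}=0$ and exactly the shape of condition (3). With condition (3) now available, substituting it back into the general-$r$ identities (and discharging the determined $\kappa$-contributions) simplifies them to the clean pair
\[
 [\varphi_a(r)\ge 1]+[\varphi_b(r)\ge p+1]=1,\qquad [\varphi_a(r)\ge p+1]+[\varphi_b(r)\ge 1]=1\qquad(1\le r\le 2p-1).
\]
Feeding in only the envelope $r-p\le\varphi(r)\le r$ finishes it: if $\varphi_a(r)\ge p+1$ then $r\ge p+1$, but the second identity then forces $\varphi_b(r)\le 0$, whence $r\le p$, a contradiction; symmetrically $\varphi_a(r)\le 0$ is impossible; so $\varphi_a(r)\in\{1,\dots,p\}$, and then the two identities yield $\varphi_b(r)\in\{1,\dots,p\}$ as well. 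By the translation above, this is precisely conditions (1) and (2).

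I expect the main obstacle to be the necessity direction: extracting condition (3) out of the $r=0$ and $r=1$ instances (the mod-$2p$ bookkeeping there is fiddly), and then the ``staircase'' argument showing that the two simplified identities for $1\le r\le 2p-1$, combined with nothing more than the trivial bounds $r-p\le\varphi(r)\le r$, already confine $\varphi_a$ and $\varphi_b$ to $\{1,\dots,p\}$. Keeping the ceiling-value case analysis honest across all the sub-ranges of $r$ (and both parities of $\quo{n}$) is where the care is needed; the reduction and the sufficiency direction are pure bookkeeping by comparison.
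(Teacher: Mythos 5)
Your proof is correct, but it takes a genuinely different route from the paper's. Both arguments begin with the same expansion $\cln{n-a_i}{2p}=\quo{n}-\quo{a}_i+\I{\rem{n}>\rem{a}_i}$, but after that the paper treats the two summands $h_1,h_2$ as functions of $n$: it first proves each is slow (successive differences in $\{-1,0,1\}$), tracks which summand accounts for each jump of $\cln{n}{2p}$, and from this deduces the structural identities $h_1(n)=\cln{n}{4p}+d$ and $h_2(n)=\cln{n-2p}{4p}-d$, first on complementary residue intervals mod $4p$ and then for all $n$; conditions (1)--(3) are then read off from these exact formulas. You instead quotient out the $2p$-periodicity at once, reducing formal satisfaction to the $4p$ identities indexed by $(\rho,r)\in\{0,1\}\times\{0,\dots,2p-1\}$, you reformulate (1)--(2) as $\varphi_a(r),\varphi_b(r)\in\{1,\dots,p\}$ for $1\le r\le 2p-1$ and (3) as $\{\rem{\kappa_a},\rem{\kappa_b}\}=\{0,p\}$ together with $\quo{\kappa_a}+\quo{\kappa_b}=-1$ (both translations are correct), and then finish by finite case analysis: sufficiency by direct evaluation of the ceilings, necessity by using $r=0$ and $r=1$ to force (3) (the cases with both $\rem{\kappa_a},\rem{\kappa_b}$ nonzero, or both zero, are indeed inconsistent, and the surviving case is driven to $\{0,p\}$ at the $r=1$ stage), after which the envelope $r-p\le\varphi\le r$ together with your two simplified identities squeezes out (1)--(2); your $d$ is recovered as $\quo{\kappa_a}$. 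What the paper's route buys is the intermediate structural fact that each summand is itself a shifted ceiling $\cln{n}{4p}\pm d$ (the picture in \fref{ceiling}), which is of independent interest and guides the $k$-ary generalization sketched in the concluding section; what your route buys is economy: no slowness lemma, no interval bookkeeping, a purely finite verification, and a transparent, symmetric role for each hypothesis. Two cosmetic points only: at $r=0$ the constraints pin down four case shapes rather than four numerical possibilities (the nonzero remainder is only forced to equal $p$ at $r=1$, which is what you actually do), and in your translation of the second half of (1) the range should read $\{p+1,\dots,2p-1\}$; neither affects correctness.
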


Note that conditions (1), (2) and (3) are invariant under the transformation from $R$ to $P$ presented in Theorem $\ref{thm:alphazeroconollylike}$ above, as well as under related transformations.

\proof

Clearly $H(n)$ is formally satisfied by $\cln{n}{2p}$ if and only if
\begin{align*}
  \cln{n}{2p} =  \cln{ n - s - \sum_{i=1}^p\cln{n-a_i}{2p} }{2p}+ \cln{ n -
    t - \sum_{i=1}^p\cln{n-b_i}{2p} }{2p}.
\end{align*}

Let $h(n) = h_1(n) + h_2(n)$ where
\begin{align*}
  h_1(n) &=  \cln{ n - s - \sum_{i=1}^p\cln{n-a_i}{2p} }{2p},  \\
  \text{ and }h_2(n) &= \cln{ n -
    t - \sum_{i=1}^p\cln{n-b_i}{2p} }{2p}.
\end{align*}

Our strategy is to show that if $h(n) = \cln{n}{2p}$, then, up to interchanging $h_1$ and $h_2$, $h_1(n) = \cln{n}{4p} + d$ and $h_2(n) = \cln{n-2p}{4p} - d$ which forces the stated conditions on the
parameters.  This is done in several steps, outlined as follows.

\lref{hislow} shows that the successive differences of both $h_1(n)$
and $h_2(n)$ are always either $-1, 0$ or $1$, regardless of the
values of the parameters $s,t,a_i$ and $b_i$.  We assume that $h(n) =
\cln{n}{2p}$, which is slowly growing, and in particular, $h(n+1) -
h(n) = 1$ if and only if $n = 2p\mu$ for some integer $\mu$.  This,
together with \lref{hislow} implies that for a given integer $\mu$,
exactly one of $h_1$ or $h_2$ satisfies $h_i(2p\mu+1)-h_i(2p\mu) = 1$.

\lref{h1conditions} and \lref{h1const} impose conditions on $h_1(n)$
for certain intervals, while Lemmas \ref{lem:h2conditions} and
\ref{lem:h2const} do the same for $h_2(n)$ in complementary intervals
(see \fref{ceiling}). We prove that $h_1(n) = \cln{n}{4p} + d$ in the
intervals where the remainder of $n$ modulo $4p$ lies in the range
$(p,3p]$, which for convenience we write as $p < n \pmod{4p}\le
3p$. We will show that in these intervals $h_1(n)$ is constant but
$h(n)$ is not.  Similarly, Lemmas \ref{lem:h2conditions} and
\ref{lem:h2const} prove that $h_2(n) = \cln{n-2p}{4p} - d$ for the
complementary intervals, $-p < n \pmod{4p}\le p $, in which $h_2(n)$
is constant but $h(n)$ is not.

\begin{figure}
\begin{center}
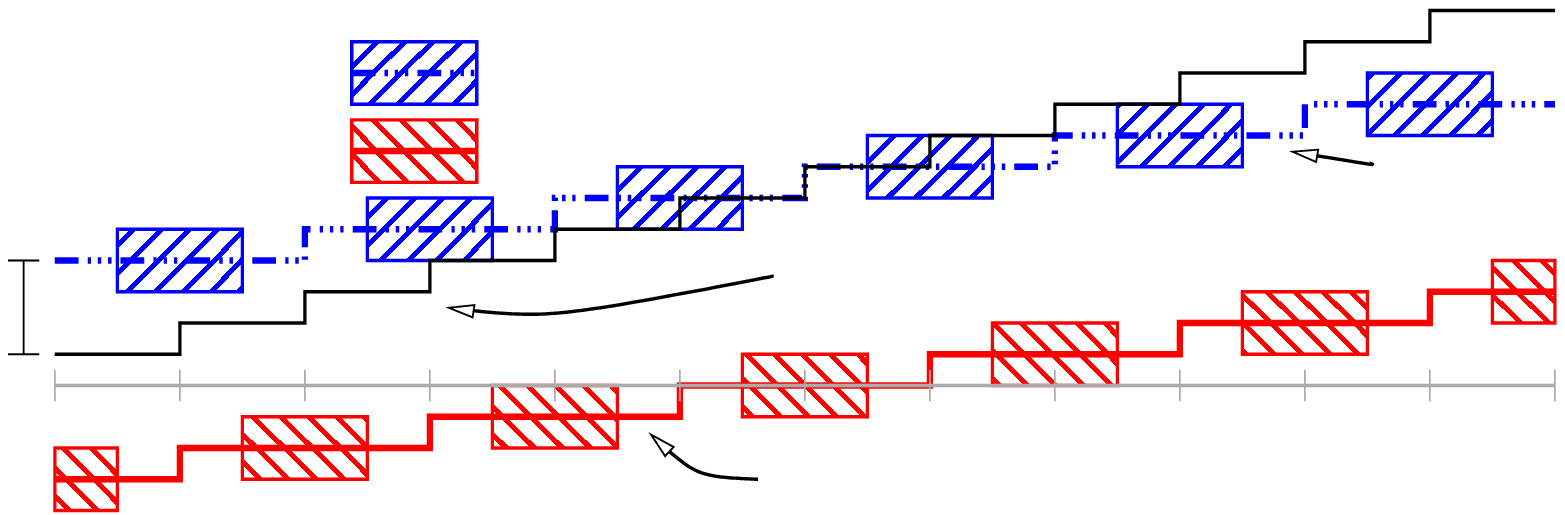
\caption{A visual representation of the proof of
  \tref{ceiling}. Lemmas \ref{lem:h1conditions} and \ref{lem:h1const}
  prove that $h_1(n)$ is constant in the intervals
  highlighted on the curve labeled $h_1(n)$.  Lemmas
  \ref{lem:h2conditions} and \ref{lem:h2const} operate similarly on
  intervals complementary to these, showing that $h_2(n)$ is a
  constant.  We assume that $h(n) = \cln{n}{2p}$, and use
  \lref{hislow}, to show that $h_1(n) = \cln{n}{4p} + d$ and
  $h_2(n)=\cln{n-2p}{4p}-d$, for all $n$}
\label{fig:ceiling}
\end{center}
\end{figure}

Thus for each interval, we know the values of $h(n)$ and exactly one
of $h_1(n)$ or $h_2(n)$.  Using $h(n) = h_1(n) + h_2(n)$ we are able
to determine that $h_1(n) = \cln{n}{4p} + d$ and $h_2(n) =
\cln{n-2p}{4p} - d$, for all $n$.  This imposes the stated conditions
on the parameters $s,t,a_i$ and $b_i$.  The converse will also become
evident.  That is, if we assume the parameters $s,t$, $a_i$ and $b_i$
satisfy conditions (1), (2) and (3), then $H(n)$ is satisfied by
$\cln{n}{2p}$.

\begin{lemma}\label{lem:hislow}
For all $n$, $|h_i(n + 1) - h_i(n)| \leq 1$.
\end{lemma}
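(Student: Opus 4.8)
The plan is to write each $h_i$ as a single ceiling of an integer-valued function and then bound the jump of that inner function. Explicitly, put
\[
g_1(n) = n - s - \sum_{i=1}^{p}\cln{n-a_i}{2p},\qquad g_2(n) = n - t - \sum_{i=1}^{p}\cln{n-b_i}{2p},
\]
so that $h_j(n) = \cln{g_j(n)}{2p}$ for $j\in\{1,2\}$. The first step is the elementary observation that for any integer $m$ and any positive integer $N$, the difference $\cln{m+1}{N}-\cln{m}{N}$ equals $1$ when $N\mid m$ and $0$ otherwise; in particular it always lies in $\{0,1\}$. Applying this with $N=2p$ and $m=n-a_i$ gives
\[
g_1(n+1) - g_1(n) = 1 - \sum_{i=1}^{p}\left(\cln{n+1-a_i}{2p} - \cln{n-a_i}{2p}\right) \in \{\,1-p,\,2-p,\,\ldots,\,0,\,1\,\},
\]
and identically for $g_2$ with the $b_i$. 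Hence the inner function never jumps by more than $p-1$ in absolute value, and in particular by strictly less than $2p$.

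The second step feeds this into the standard subadditivity inequality $\lceil x\rceil - \lceil y\rceil \le \lceil x-y\rceil$ (which follows from $\lceil x\rceil = \lceil (x-y)+y\rceil \le \lceil x-y\rceil + \lceil y\rceil$). Applying it in both directions with $x = g_j(n+1)/2p$ and $y = g_j(n)/2p$ yields
\[
h_j(n+1) - h_j(n) \le \cln{g_j(n+1)-g_j(n)}{2p} \le \cln{1}{2p} = 1
\]
and, using $g_j(n)-g_j(n+1) \le p-1$,
\[
h_j(n) - h_j(n+1) \le \cln{g_j(n)-g_j(n+1)}{2p} \le \cln{p-1}{2p} \le 1,
\]
the last step because $0 \le \frac{p-1}{2p} < 1$. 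Combining the two displays gives $|h_j(n+1) - h_j(n)| \le 1$ for $j=1,2$, which is exactly the assertion of the lemma (with the subscript $i$ of the statement playing the role of $j$ here).

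I do not anticipate a genuine obstacle: the argument reduces to the two-line computation of $g_j(n+1)-g_j(n)$ together with subadditivity of the ceiling function. The only points needing a moment's care are verifying the elementary ceiling-difference fact used in the first step, and noting the $p=1$ edge case of the second inequality, where $\cln{p-1}{2p} = \cln{0}{2p} = 0$, so that $h_j$ is in fact nondecreasing; neither of these causes any difficulty.
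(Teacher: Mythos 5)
Your proof is correct and follows essentially the same route as the paper: bound the change of the integer numerator $g_j(n+1)-g_j(n)$ (which lies in $[1-p,1]$ because each $\cln{n-a_i}{2p}$ increases by $0$ or $1$), and then conclude the ceiling can move by at most one; your use of ceiling subadditivity just makes explicit the final step that the paper leaves implicit after noting the numerator changes by less than $2p$ in absolute value. The only blemish is the throwaway phrase that the inner function ``never jumps by more than $p-1$ in absolute value,'' which fails for $p=1$ (the jump can be $1$), but your actual displayed estimates use the correct directional bounds, so nothing breaks.
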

\begin{proof}
  We show that the numerator in $h_1$ does not change too
  quickly, which ensures that $h_1$ has successive
  differences that are small in magnitude.

  Observe that $-1\le\cln{n-a_i}{2p} - \cln{n+1-a_i}{2p} \le 0$, so
  that
  \begin{align*}
    -p\le \sum_{i=1}^p \left( \cln{n-a_i}{2p} - \cln{n+1-a_i}{2p}\right) \le 0.
  \end{align*}
  Subtracting the numerators for successive arguments of $h_1$ we get
  \begin{align*}
    \left\lvert n+1 - s - \sum_{i=1}^p\cln{n+1-a_i}{2p} - n + s +
      \sum_{i=1}^p\cln{n-a_i}{2p} \right\rvert\\
    = \left\lvert 1 + \sum_{i=1}^p\left(\cln{n-a_i}{2p} -
        \cln{n+1-a_i}{2p}\right)\right\rvert < p < 2p.
  \end{align*}
  Thus $|h_1(n+1) - h_1(n)| \le 1$.
  The proof for $h_2$ is similar, just replace $s$ with $t$ and
  $a_i$ with $b_i$.
  \end{proof}

 If $h(n+1) - h(n) = 1$, then necessarily $h_1(n+1)-h_1(n)>0$ or
 $h_2(n+1)-h_2(n)>0$.  By \lref{hislow} either $h_1(n+1)-h_1(n)=1$ and $h_2(n+1)-h_2(n)=0$ or vice versa. Without loss of generality, since we can interchange $h_1$ and $h_2$, we assume that
 \begin{align}\label{eq:assume}
   h_1(1) - h_1(0) = 1 \text{ and } h_2(1) - h_2(0) = 0.
 \end{align}

 It is helpful to expand the numerator of $h_1(n)$ using the notation $\quo{z}$ and $\rem{z}$ for the quotient and remainder modulo $2p$ introduced above:
\begin{align}
\label{eq:prelimh1:a}  h_1(n) &= \cln{ 2p\quo{n} + \rem{n} - s - \sum_{i=1}^p\cln{2p\quo{n} + \rem{n} - 2p\quo{a}_i - \rem{a}_i}{2p} }{2p} \\
\notag &= \cln{ 2p\quo{n} + \rem{n} - s - \sum_{i=1}^p\left(\quo{n}-\quo{a}_i + \cln{\rem{n} - \rem{a}_i}{2p} \right)}{2p} \\
\label{eq:prelimh1}  &= \cln{ p\quo{n} + \rem{n} - s + \sum_{i=1}^p \quo{a}_i- \sum_{i=1}^p\cln{\rem{n}-\rem{a}_i}{2p} }{2p}.
\end{align}
A similar expression holds for $h_2$, where we replace $s$ with $t$ and $a_i$ with $b_i$.

Observe that the sum $\sum_{i=1}^p\cln{\rem{n}-\rem{a}_i}{2p}$ in (\ref{eq:prelimh1}) is
quite tame.  In particular, $0\le \rem{n} < 2p$ and $0 \le \rem{a}_i < 2p$ so
that $\cln{\rem{n}-\rem{a}_i}{2p}$ is either 0 or 1, and is equal to $1$ if and only if
$\rem{n}$ is strictly greater than $\rem{a}_i$.  Using the standard Iversonian notation, we write
\begin{align*}
  \cln{\rem{n}-\rem{a}_i}{2p} = \I{\rem{n}>\rem{a}_i}.
\end{align*}
Using this notation we have the inequality $0\le \sum_{i=1}^p \I{
  \rem{n} > \rem{a}_i } \le p$, which is used extensively in the
lemmas below to describe the behavior of $h_i$ for certain intervals.
We can combine the Iversonian notation with ($\ref{eq:prelimh1}$) to
rewrite:
\begin{align}
\label{eq:numh1}  h_1(n) = \cln{ p\quo{n} + \rem{n} - s + \sum_{i=1}^p \quo{a}_i- \sum_{i=1}^p\I{\rem{n}>\rem{a}_i} }{2p}.
\end{align}

Let $d = h_1(0)$, so that $h_2(0) = h(0) - h_1(0) = -d$.  By (\ref{eq:assume}) $h_1(1) = d+1$ while $h_2(1) = -d$.

\begin{lemma}\label{lem:h1conditions}
  For each $i$ in $\{1, 2 , \ldots, p\}$, $\rem{a}_i \neq 0$. Further, $-s + \sum_{i=1}^p \quo{a}_i = 2pd$.
\end{lemma}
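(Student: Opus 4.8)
The plan is to evaluate the explicit formula \eqref{eq:numh1} for $h_1$ at the two points $n=0$ and $n=1$, and then read off the two claimed conditions directly from the normalizing assumption \eqref{eq:assume} that $h_1(1)-h_1(0)=1$. Write $A := -s + \sum_{i=1}^p \quo{a}_i$ for the quantity we must pin down. Since $\quo{0}=\rem{0}=0$ and every $\rem{a}_i \ge 0$, each Iversonian $\I{\rem{0}>\rem{a}_i}=\I{0>\rem{a}_i}$ vanishes, so \eqref{eq:numh1} gives $h_1(0) = \cln{A}{2p}$; by definition $d = h_1(0)$, hence $\cln{A}{2p}=d$. For $n=1$ we have $\quo{1}=0$ and $\rem{1}=1$, and $\I{1>\rem{a}_i}=1$ exactly when $\rem{a}_i=0$, so if $z$ denotes the number of indices $i$ with $\rem{a}_i=0$ then \eqref{eq:numh1} yields $h_1(1) = \cln{A+1-z}{2p}$.

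Next I would dispose of the case $z\ge 1$: then $A+1-z \le A$, so by monotonicity of $\lceil\,\cdot\,/(2p)\rceil$ we get $\cln{A+1-z}{2p} \le \cln{A}{2p} = h_1(0)$, contradicting $h_1(1)=h_1(0)+1$ from \eqref{eq:assume}. Hence $z=0$, i.e.\ no $\rem{a}_i$ equals $0$; equivalently $\rem{a}_i \ne 0$ for each $i \in \{1,\dots,p\}$, which is the first assertion. With $z=0$ the equality $h_1(1)=h_1(0)+1$ reads $\cln{A+1}{2p}=\cln{A}{2p}+1$. Since $\lceil x/(2p)\rceil$ increases by exactly $1$ as $x\mapsto x+1$ when $2p\mid x$ and is otherwise unchanged, this forces $2p \mid A$; combined with $\cln{A}{2p}=d$ it gives $A = 2pd$, that is $-s + \sum_{i=1}^p \quo{a}_i = 2pd$, the second assertion.

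There is no deep obstacle here: the argument is a short direct computation. The only points requiring care are the bookkeeping with the quotient/remainder notation $\quo{\cdot},\rem{\cdot}$ and the two elementary facts about the map $x \mapsto \lceil x/(2p)\rceil$ used above (monotonicity, and that a unit increment of the argument raises the value by at most $1$, and by exactly $1$ only at multiples of $2p$). Both hold for negative arguments as well, which is what lets us apply them to $A$, whose sign is a priori unknown.
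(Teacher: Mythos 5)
Your proposal is correct and follows essentially the same route as the paper: evaluate $h_1$ at $n=0$ and $n=1$ via \eqref{eq:numh1}, use the normalization $h_1(1)-h_1(0)=1$ to force every Iversonian $\I{1>\rem{a}_i}$ to vanish (so $\rem{a}_i\neq 0$), and then use the fact that the ceiling jumps only at multiples of $2p$ to conclude $-s+\sum_{i=1}^p\quo{a}_i=2pd$. The only cosmetic difference is your explicit count $z$ of indices with $\rem{a}_i=0$, which the paper handles via the inequality $1-\sum_i\I{1>\rem{a}_i}>0$.
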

\begin{proof}
  We compare numerators in $h_1(0)$ and $h_1(1)$ using the form
  derived in (\ref{eq:numh1}).  Notice that $\I{0>\rem{a}_i} = 0$ for
  every $i$, so that
\begin{align*}
h_1(0) = \cln{-s + \sum_{i=1}^p \quo{a}_i}{2p}.
\end{align*}

Since $h_1(1) - h_1(0) = 1$ and
\begin{align*}
  h_1(1) = \cln{1-s + \sum_{i=1}^p\quo{a}_i -
    \sum_{i=1}^p\I{1>\rem{a}_i}}{2p},
\end{align*}
the numerator of $h_1(1)$ must be greater than the numerator of $h_1(0)$, that is,
\begin{align*}
  -s + \sum_{i=1}^p\quo{a}_i < 1 -s + \sum_{i=1}^p\quo{a}_i
  -\sum_{i=1}^p\I{1>\rem{a}_i},
\end{align*}
which simplifies to
\begin{align*}
  1 - \sum_{i=1}^p\I{1>\rem{a}_i} > 0.
\end{align*}
It follows that for each $1\le i \le p$ we have $\I{1>\rem{a}_i} = 0$ so $\rem{a}_i \ge 1$.
Thus, $h_1(1) = \cln{1-s +
  \sum_{i=1}^p\quo{a}_i}{2p} = d+1$ and $h_1(0) = \cln{-s +
  \sum_{i=1}^p\quo{a}_i}{2p} = d$ which implies that $-s +
\sum_{i=1}^p\quo{a}_i = 2pd$, as required.
\end{proof}

We can now characterize the behavior of $h_1(n)$ whenever $p < n \pmod{4p}\le 3p$.

\begin{lemma}
\label{lem:h1const}
With $d$ as defined above (right before the statement of \lref{h1conditions}),
$$h_1(n) = \cln{n}{4p} + d$$

whenever $p < n \pmod{4p}\le 3p $.
\end{lemma}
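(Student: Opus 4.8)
The plan is to reduce the claimed formula to an elementary inequality on the Iversonian sum $\sigma(n):=\sum_{i=1}^p\I{\rem n>\rem{a}_i}$, which automatically satisfies $0\le\sigma(n)\le p$, and then to check that inequality on the two arithmetic progressions that make up the interval $p<n\pmod{4p}\le 3p$. First I would take the closed form (\ref{eq:numh1}) for $h_1(n)$ and substitute the identity $-s+\sum_{i=1}^p\quo{a}_i=2pd$ supplied by \lref{h1conditions}; since $2pd$ is a multiple of $2p$ this gives
\[
  h_1(n)=d+\cln{p\quo n+\rem n-\sigma(n)}{2p}.
\]
Next, writing $\quo n=2q+\varepsilon$ with $\varepsilon\in\{0,1\}$ the parity of $\quo n$, I would note that $n=4pq+2p\varepsilon+\rem n$ with $0\le 2p\varepsilon+\rem n<4p$, so $\flr n{4p}=q$ and $n\pmod{4p}=2p\varepsilon+\rem n$, and that $p\quo n=2pq+p\varepsilon$, whence
\[
  h_1(n)=d+q+\cln{p\varepsilon+\rem n-\sigma(n)}{2p}.
\]
Since the hypothesis forces $n\pmod{4p}\ge p+1\ge 1$ we also have $\cln n{4p}=q+1$, so the lemma is equivalent to $\cln{p\varepsilon+\rem n-\sigma(n)}{2p}=1$, i.e.\ to the double inequality $0<p\varepsilon+\rem n-\sigma(n)\le 2p$.

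The remaining step is a case split on $\varepsilon$. The hypothesis $p<2p\varepsilon+\rem n\le 3p$ together with $0\le\rem n<2p$ gives $p+1\le\rem n\le 2p-1$ when $\varepsilon=0$ and $0\le\rem n\le p$ when $\varepsilon=1$. For $\varepsilon=0$ the quantity is $\rem n-\sigma(n)$, which lies between $\rem n-p\ge 1$ and $\rem n\le 2p-1$, hence strictly inside $(0,2p]$ by the bound $0\le\sigma(n)\le p$. For $\varepsilon=1$ the quantity is $p+\rem n-\sigma(n)$, which is at most $p+\rem n\le 2p$ and at least $p+\rem n-p=\rem n\ge 0$; strict positivity in the one remaining sub-case $\rem n=0$ follows because then $\sigma(n)=\sum_i\I{0>\rem{a}_i}=0$ (every $\rem{a}_i\ge 0$), so the quantity equals $p>0$.

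I expect no genuine obstacle here: the whole argument runs on the crude bound $0\le\sigma(n)\le p$ together with careful bookkeeping of quotients and remainders modulo $2p$ versus modulo $4p$ — in particular, one must keep in mind that $\flr n{4p}$ and $n\pmod{4p}$ depend on the parity of $\quo n$ and not merely on $\rem n$. The one spot that warrants a separate word is the boundary sub-case $\rem n=0$ with $\quo n$ odd, where the generic lower bound degrades to $\ge 0$ and one must observe that $\sigma$ vanishes. The finer conditions (1)--(3) of \tref{ceiling} play no role in this lemma — only the identity $-s+\sum_i\quo{a}_i=2pd$ from \lref{h1conditions} is used — and they are extracted from the remaining steps of the proof of \tref{ceiling} (the complementary Lemmas~\ref{lem:h2conditions} and~\ref{lem:h2const} and the subsequent comparison via $h=h_1+h_2$), once $h_1$ and $h_2$ have been pinned down on all residue classes.
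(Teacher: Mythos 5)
Your proposal is correct and follows essentially the same route as the paper's proof: substitute $-s+\sum_i\quo{a}_i=2pd$ into (\ref{eq:numh1}), pull $d$ out of the ceiling, and verify that the remaining ceiling equals $\cln{n}{4p}$ using only the crude bound $0\le\sum_i\I{\rem{n}>\rem{a}_i}\le p$, split according to the residue of $n$ modulo $4p$. The only difference is bookkeeping: you organize the case analysis by the parity of $\quo{n}$ (with the boundary $\rem{n}=0$ handled inside the odd case), whereas the paper uses three cases $p<n\pmod{4p}<2p$, $n\pmod{4p}=2p$, and $2p<n\pmod{4p}\le 3p$ — the computations are the same in substance.
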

\begin{proof}

  We rewrite (\ref{eq:numh1}) using $ -s + \sum_{i=1}^p\quo{a}_i = 2pd$ from Lemma \ref{lem:h1conditions}:
  \begin{align*}
    h_1(n)& = \cln{ p\quo{n} + \rem{n} + 2pd - \sum_{i=1}^p\I{\rem{n}>\rem{a}_i} }{2p}\\
    &= d + \cln{ p\quo{n} + \rem{n} - \sum_{i=1}^p\I{\rem{n}>\rem{a}_i}
    }{2p}.
  \end{align*}
  To complete the proof of the lemma, we check that the ceiling function above is in fact $\cln{n}{4p}$ as desired. We separate into three cases:

  Case 1: $p < n \pmod{4p} < 2p$. Observe that in this case, $\quo{n}$ is even, so
  \begin{align*}
    \cln{ p\quo{n} + \rem{n} - \sum_{i=1}^p\I{\rem{n}>\rem{a}_i}} {2p}
    =& \frac{\quo{n}}{2} + \cln{\rem{n} -
      \sum_{i=1}^p\I{\rem{n}>\rem{a}_i}}{2p}\\
    =& \frac{n-\rem{n}}{4p} +
    \cln{\rem{n} - \sum_{i=1}^p\I{\rem{n}>\rem{a}_i}}{2p}.
  \end{align*}
  Since $2p > \rem{n} > p$, we have that $\frac{n-\rem{n}}{4p} =
  \cln{n}{4p}-1$, so we simply need to note that $\cln{\rem{n} -
    \sum_{i=1}^p\I{\rem{n}>\rem{a}_i}}{2p} = 1$ because $2p > \rem{n}
  > p$ and $\sum_{i=1}^p\I{\rem{n}>\rem{a}_i} \le p$.

  Case 2: $n \pmod{4p} = 2p$. This means $\rem{n} = 0$, so
  \begin{align*}
    \cln{ p\quo{n} + \rem{n} - \sum_{i=1}^p\I{\rem{n}>\rem{a}_i} }{2p}
    = \cln{ p\quo{n} }{2p} = \cln{2p\quo{n}}{4p}= \cln{n}{4p}.
  \end{align*}

  Case 3: $2p < n \pmod{4p} \leq 3p$. Observe that in this case $\quo{n}$ is odd. Hence

  \begin{align*}
    \cln{ p\quo{n} + \rem{n} - \sum_{i=1}^p\I{\rem{n}>\rem{a}_i}}{2p}
    =& \cln{ p\quo{n}+p-p + \rem{n} -
      \sum_{i=1}^p\I{\rem{n}>\rem{a}_i}}{2p}\\
    =& \frac{\quo{n}+1}{2} +
    \cln{-p+\rem{n} - \sum_{i=1}^p\I{\rem{n}>\rem{a}_i}}{2p}.
  \end{align*}
  Since $0<\rem{n} \leq p$, we have $-2p < -p+\rem{n} -
  \sum_{i=1}^p\I{\rem{n}>\rem{a}_i} \le 0$, so that
  \begin{align*}
    \cln{-p+\rem{n} - \sum_{i=1}^p\I{\rem{n}>\rem{a}_i}}{2p}=0.
  \end{align*}
 Lastly, since $p \geq
  \rem{n} > 0$, it is immediate that $\frac{\quo{n}+1}{2} =
  \cln{n}{4p}$.
\end{proof}

By Lemma \ref{lem:h1const} we know that $h_1(2p+1) - h_1(2p) = 0$.
Since $h(2p+1) - h(2p) = 1$, we have that
$h_2(2p+1) - h_2(2p) =1$.  Just as we used our assumption that $h_1(1)-h_1(0)=1$ in the above two lemmas, we apply this crucial fact to prove the corresponding two lemmas for $h_2$ that follow. The technical details of the proofs are similar to those of the two preceding lemmas.

\begin{lemma}\label{lem:h2conditions}
  For each $i$ in $\{1, 2 , \ldots, p\}$,
$\rem{b}_i \neq 0$.
Further, with $d$ as defined above,
$-t + \sum_{i=1}^p \quo{b}_i = -2pd - p$.
\end{lemma}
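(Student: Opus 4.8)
The plan is to mirror the proof of \lref{h1conditions} essentially verbatim, with $h_2,t,\rem{b}_i,\quo{b}_i$ in place of $h_1,s,\rem{a}_i,\quo{a}_i$, and with the role played there by the assumption $h_1(1)-h_1(0)=1$ now played by the fact $h_2(2p+1)-h_2(2p)=1$ recorded in the paragraph immediately preceding the lemma. The first ingredient is the exact analogue of (\ref{eq:numh1}),
\begin{align*}
  h_2(n) = \cln{ p\quo{n} + \rem{n} - t + \sum_{i=1}^p \quo{b}_i - \sum_{i=1}^p\I{\rem{n}>\rem{b}_i} }{2p},
\end{align*}
obtained by repeating the computation (\ref{eq:prelimh1:a})--(\ref{eq:numh1}) with $b_i$ in place of $a_i$.

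First I would specialize this formula to $n=2p$ (so $\quo{n}=1$, $\rem{n}=0$, hence $\I{0>\rem{b}_i}=0$ for every $i$) and to $n=2p+1$ (so $\quo{n}=1$, $\rem{n}=1$), and invoke $h_2(2p+1)-h_2(2p)=1$ to conclude that the numerator at $2p+1$ strictly exceeds the numerator at $2p$. After cancellation this says $1-\sum_{i=1}^p\I{1>\rem{b}_i}>0$, which forces $\I{1>\rem{b}_i}=0$, i.e. $\rem{b}_i\ge 1$, for every $i$ --- the first assertion of the lemma.

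To identify the additive constant I would pin down $h_2(2p)$ in terms of $d$. Applying \lref{h1const} at $n=2p$ is legitimate, since $2p\pmod{4p}=2p$ lies in $(p,3p]$, and it gives $h_1(2p)=\cln{2p}{4p}+d=1+d$; since $h(2p)=\cln{2p}{2p}=1$ by our standing assumption $h(n)=\cln{n}{2p}$, we get $h_2(2p)=h(2p)-h_1(2p)=-d$. Combining this with the displayed formula yields $\cln{p-t+\sum_{i=1}^p\quo{b}_i}{2p}=-d$, while the fact that this ceiling increases by exactly $1$ as its argument passes from $2p$ to $2p+1$ (recall $\sum_i\I{1>\rem{b}_i}=0$) forces its numerator to be an exact multiple of $2p$. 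Hence $p-t+\sum_{i=1}^p\quo{b}_i=-2pd$, that is, $-t+\sum_{i=1}^p\quo{b}_i=-2pd-p$, as claimed.

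There is no genuine obstacle here: the argument is the same ceiling-function arithmetic already used in \lref{h1conditions} and \lref{h1const}. The one point worth a sentence of care is the bookkeeping that produces the ``crucial fact'' $h_2(2p+1)-h_2(2p)=1$ from $h(2p+1)-h(2p)=1$ together with $h_1(2p+1)-h_1(2p)=0$ (the latter a consequence of \lref{h1const} applied at $n=2p$ and $n=2p+1$) and \lref{hislow}, and the observation that this is consistent with the interchange of $h_1$ and $h_2$ fixed in (\ref{eq:assume}); both of these have already been supplied in the text just before the lemma.
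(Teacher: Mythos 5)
Your proposal is correct and follows essentially the same route as the paper: use the $h_2$-analogue of (\ref{eq:numh1}) at $n=2p$ and $n=2p+1$, exploit $h_2(2p+1)-h_2(2p)=1$ (itself obtained from \lref{h1const} and $h=h_1+h_2$) to force $\I{1>\rem{b}_i}=0$ for all $i$, and then read off $p-t+\sum_i\quo{b}_i=-2pd$ from the two ceiling values $-d$ and $-d+1$. Your explicit remark that the numerator must be an exact multiple of $2p$ only makes precise a step the paper leaves implicit.
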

\begin{proof}
  Recall that if $n=2p$, then $\quo{n} = 1$ and $\rem{n} = 0$.
  Following the proof of \lref{h1conditions} we compare the numerators
  of $h_2(2p)$ and $h_2(2p+1)$.  Using (\ref{eq:numh1}) and
  $\I{0>\rem{b}_i} = 0$, we have
\begin{align*}
  h_2(2p) = \cln{p-t + \sum_{i=1}^p\quo{b}_i}{2p}.
\end{align*}
Since $h_2(2p+1) - h_2(2p) = 1$ and
\begin{align*}
  h_2(2p+1) = \cln{p + 1-t + \sum_{i=1}^p\quo{b}_i -
    \sum_{i=1}^p\I{1>\rem{b}_i}}{2p},
\end{align*}
the numerator of $h_2(2p+1)$ must be greater than that of $h_2(2p)$.
This inequality,
\begin{align*}
  p-t + \sum_{i=1}^p\quo{b}_i < p+1 -t + \sum_{i=1}^p\quo{b}_i
  -\sum_{i=1}^p\I{1>\rem{b}_i}
\end{align*}
simplifies to
\begin{align*}
1 - \sum_{i=1}^p\I{1>\rem{b}_i} > 0,
\end{align*}
implying that $\I{1>\rem{b}_i} = 0$, so $\rem{b}_i \ge 1$ for each
$1\le i \le p$. for each $1\le i \le p$.  Thus, $h_2(2p+1) =
\cln{p+1-t + \sum_{i=1}^p\quo{b}_i}{2p} = -d+1$ and $h_2(2p) =
\cln{p-t + \sum_{i=1}^p\quo{b}_i}{2p} = -d$.  Applying \lref{h1const}
and $h(n) = h_1(n) + h_2(n)$, we have $p-t + \sum_{i=1}^p\quo{b}_i =
-2pd$.
\end{proof}

Note that Lemmas \ref{lem:h2conditions} and \ref{lem:h1conditions}
establish condition (3) of \tref{ceiling}.

We now prove the analogue to \lref{h1const} for $h_2$ that we promised above.
\begin{lemma}
\label{lem:h2const}
Whenever $-p < n \pmod{4p}\le p$, then $h_2(n) = \cln{n-2p}{4p} - d$.
\end{lemma}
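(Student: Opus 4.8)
The plan is to imitate the proof of \lref{h1const} almost verbatim, after translating the argument by $2p$. First I would write down the $h_2$-analogue of (\ref{eq:numh1}), namely
\[
h_2(n) = \cln{p\quo{n} + \rem{n} - t + \sum_{i=1}^p \quo{b}_i - \sum_{i=1}^p \I{\rem{n} > \rem{b}_i}}{2p},
\]
and substitute the identity $-t + \sum_{i=1}^p \quo{b}_i = -2pd - p$ furnished by \lref{h2conditions}. This yields $h_2(n) = -d + \cln{p\quo{n} + \rem{n} - p - \sum_{i=1}^p \I{\rem{n}>\rem{b}_i}}{2p}$, so the lemma reduces to showing that this remaining ceiling equals $\cln{n-2p}{4p}$ on the stated range of $n$.

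Next I would set $m = n - 2p$. Since $m = 2p(\quo{n}-1) + \rem{n}$ we have $\quo{m} = \quo{n} - 1$ and $\rem{m} = \rem{n}$, whence $p\quo{n} + \rem{n} - p = p\quo{m} + \rem{m}$ and $\I{\rem{n}>\rem{b}_i} = \I{\rem{m}>\rem{b}_i}$, so that
\[
h_2(n) = -d + \cln{p\quo{m} + \rem{m} - \sum_{i=1}^p \I{\rem{m}>\rem{b}_i}}{2p}.
\]
This inner ceiling is exactly the quantity that the three-case computation inside the proof of \lref{h1const} shows equals $\cln{m}{4p}$, valid whenever $p < m \pmod{4p} \le 3p$. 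That computation uses only the crude bound $0 \le \sum_{i} \I{\rem{m} > \rem{b}_i} \le p$ together with the location of $\rem{m}$ in each of the subcases $p < m\pmod{4p} < 2p$, $m\pmod{4p} = 2p$, and $2p < m\pmod{4p} \le 3p$; it is indifferent to whether the subscripted parameters are $a_i$ or $b_i$. Since ``$p < m \pmod{4p} \le 3p$'' is precisely ``$-p < n \pmod{4p} \le p$'', we conclude $h_2(n) = -d + \cln{m}{4p} = \cln{n-2p}{4p} - d$, as claimed.

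The only point requiring care — and the reason it is worth writing out rather than merely asserting ``by the same argument'' — is that the hypothesis interval $-p < n \pmod{4p} \le p$ straddles $0$, so one must pass to $m = n-2p$ (equivalently, work with the signed residue of $n$ modulo $4p$) before the case boundaries align with those of \lref{h1const}. I do not anticipate any genuine obstacle: once the shift $n \mapsto n-2p$ is in place, the bookkeeping with $\quo{\,\cdot\,}$, $\rem{\,\cdot\,}$ and the parity of $\quo{m}$ is identical to what has already been carried out for $h_1$, and one could, if a self-contained argument is preferred, simply recopy the three cases of \lref{h1const} with $a_i$ replaced by $b_i$ and an overall $-d$ in place of $+d$.
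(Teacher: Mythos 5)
Your proof is correct and follows essentially the same route as the paper: the paper sets $g(n)=h_2(n+2p)$, substitutes the identity from \lref{h2conditions}, and reruns the three-case computation of \lref{h1const}, which is exactly your shift $m=n-2p$ in the opposite direction. The only difference is cosmetic bookkeeping with $\quo{\,\cdot\,}$ and $\rem{\,\cdot\,}$, and your observation that the case analysis only uses $0\le\sum_i\I{\rem{m}>\rem{b}_i}\le p$ is precisely why the paper can say the remainder "follows closely" the proof for $h_1$.
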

\proof Let $g(n) = h_2(n+2p)$. We prove that $g(n) = \cln{n}{4p} - d$
whenever $p < n \pmod{4p}\le 3p$. Using the analogue of (\ref{eq:prelimh1:a}) for $h_2$ and simplifying we get
\begin{align*}
  h_2(n+2p) = \cln{ 2p(\quo{n}+1) + \rem{n} - t - \sum_{i=1}^p\cln{2p(\quo{n}+1) + \rem{n} - 2p\quo{b}_i - \rem{b}_i}{2p} }{2p} \\
  h_2(n+2p)= \cln{ p\quo{n} + p + \rem{n} - t + \sum_{i=1}^p \quo{b}_i- \sum_{i=1}^p\I{\rem{n}>\rem{b}_i} }{2p}.\\
\end{align*}
Recall from Lemma \ref{lem:h2conditions} that $p -t +
\sum_{i=1}^p\quo{b}_i = -2pd$.  Thus, the remainder of the proof
follows closely that for $h_1(n)$ in Lemma \ref{lem:h1const}.
Therefore $g(n) = h_2(n+2p) = \cln{n}{4p} - d$ so that $h_2(n) =
\cln{n-2p}{4p}-d$ for $-p< n \pmod{4p}\le p$.\qed

Using the fact that $h_1$ and $h_2$ are constant for complementary
ranges of $n$ and that they are related by $h(n)=h_1(n) + h_2(n)$ we are able to
characterize their behavior for all $n$.

\begin{lemma} For all $n$, $  h_1(n) = \cln{n}{4p} +d$ and $h_2(n) = \cln{n-2p}{4p} -d.$
\end{lemma}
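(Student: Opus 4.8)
The plan is to bootstrap the two one-sided formulas established in \lref{h1const} and \lref{h2const} using the standing relation $h(n) = h_1(n) + h_2(n) = \cln{n}{2p}$. The key structural observation is that the two congruence conditions ``$p < n \pmod{4p}\le 3p$'' and ``$-p < n \pmod{4p}\le p$'' are complementary: every integer $n$ satisfies exactly one of them. On the intervals where the first condition holds we already know $h_1(n)$ exactly, so we can solve for $h_2(n)$; on the intervals where the second holds we already know $h_2(n)$ exactly, so we can solve for $h_1(n)$.

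The only genuinely new ingredient I would need is the elementary ceiling identity
\[
\cln{n}{2p} = \cln{n}{4p} + \cln{n-2p}{4p},
\]
which holds for \emph{every} integer $n$. This I would prove first by writing $n = 4pq + r$ with $0 \le r < 4p$ and checking the three cases $r = 0$, $1 \le r \le 2p$, and $2p < r < 4p$; in each case both sides reduce to $2q$, $2q+1$, and $2q+2$ respectively.

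Granting the identity, the argument is immediate. If $p < n \pmod{4p}\le 3p$, then \lref{h1const} gives $h_1(n) = \cln{n}{4p} + d$, so
\[
h_2(n) = h(n) - h_1(n) = \cln{n}{2p} - \cln{n}{4p} - d = \cln{n-2p}{4p} - d,
\]
where the last step is the identity. Symmetrically, if $-p < n \pmod{4p}\le p$, then \lref{h2const} gives $h_2(n) = \cln{n-2p}{4p} - d$, and hence
\[
h_1(n) = h(n) - h_2(n) = \cln{n}{2p} - \cln{n-2p}{4p} + d = \cln{n}{4p} + d.
\]
Since these two cases exhaust all integers $n$, both formulas hold for all $n$, which is the assertion of the lemma.

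There is essentially no real obstacle here: all of the substantive work was done in Lemmas \ref{lem:hislow}, \ref{lem:h1conditions}--\ref{lem:h2const}, and what remains is the one-line ceiling identity together with the partition of $\mathbb{Z}$ into the two residue ranges. The only point requiring a little care is the signed interpretation of the congruence ``$-p < n \pmod{4p}\le p$'' (namely that it picks out the residues $0,1,\ldots,p$ together with $3p+1,\ldots,4p-1$), so that it is genuinely complementary to ``$p < n \pmod{4p}\le 3p$''.
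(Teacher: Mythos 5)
Your proposal is correct and follows essentially the same route as the paper: the paper's proof likewise writes $\cln{n}{2p} = \left(\cln{n}{4p}+d\right) + \left(\cln{n-2p}{4p}-d\right)$ and then uses Lemma \ref{lem:h1const} on the range $p < n \pmod{4p} \le 3p$ and Lemma \ref{lem:h2const} on the complementary range to solve for the other summand via $h(n)=h_1(n)+h_2(n)$. The only difference is cosmetic: you spell out the case check of the ceiling identity, which the paper dismisses as easy to see.
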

\begin{proof}
  It is easy to see that for all $n$ the ceiling function satisfies
  \begin{align*}
    h(n) = \cln{n}{2p} = \left(\cln{n}{4p}+ d\right) + \left(\cln{n-2p}{4p} - d\right).
  \end{align*}
  When $p< n \pmod{4p}\le 3p $ we have $h_1(n) = \cln{n}{4p} + d$ by
  \lref{h1const}; this implies that $h_2(n) = \cln{n-2p}{4p}-d$ in
  this range.  When $-p< n \pmod{4p}\le p $ we have $h_2(n) =
  \cln{n-2p}{4p}-d$ by \lref{h2const}; this implies that $h_1(n) =
  \cln{n}{4p} + d$ in this range.
\end{proof}



Now that we have proven that for all $n$, $h_1(n)=\cln{n}{4p}+d$, we can show that condition (1) on the $\rem{a}_i$ is necessary. Recall that in the proof of Lemma \ref{lem:h1const}, we showed for all $n$, $h_1(n) = d + \cln{p\quo{n} + \rem{n} - \sum_{i=1}^p\I{\rem{n}>\rem{a}_i}}{2p}$. Therefore we have that for all $n$,
\begin{align}
\label{eq:ceilingequality}
\cln{n}{4p}=\cln{p\quo{n} + \rem{n} - \sum_{i=1}^p\I{\rem{n}>\rem{a}_i}}{2p}.
\end{align}
Our proof is driven by this ceiling equality. In fact, we only need to use $n$ in the range $0 < n < 4p$ to get the desired results; the two segments $0 < n < 2p$ and $2p < n < 4p$ will force the two different parts of condition (1).

First, consider $n$ in the range $0 < n < 2p$. In this case $\rem{n} =
n$ and $\quo{n} = 0$. So (\ref{eq:ceilingequality}) implies
$\cln{n-\sum_{i=1}^p\I{\rem{n}>\rem{a}_i}}{2p} = 1$, so $0 <
n-\sum_{i=1}^p\I{\rem{n}>\rem{a}_i} \leq 2p$. From the left inequality
we get $\sum_{i=1}^p\I{n>\rem{a}_i} < n$.  It is easy to see how this
implies the first part of condition $(1)$: for each integer $j$ in
$\{0,1,\ldots,p-1\}$, let $j+1=n$. Then by the preceding inequality
$\sum_{i=1}^p\I{j\ge \rem{a}_i} \le j$.  That is, for each integer $j$
in $\{0,1,\ldots,p-1\}$, at most $j$ of the $\rem{a}_i$s satisfy
$\rem{a}_i \le j$.  Therefore this condition is necessary for \eref{ceilingequality} to be satisfied for $n$ in the
range $0<n\pmod{4p}<2p$.

Next, consider $2p < n < 4p$. Here, $\rem{n} = n-2p$ and $\quo{n} =
1$. Substituting these into (\ref{eq:ceilingequality}) we get
$\cln{p+n-2p-\sum_{i=1}^p\I{\rem{n}>\rem{a}_i}}{2p} = 1$, which is
equivalent to $0 < n-p-\sum_{i=1}^p\I{\rem{n}>\rem{a}_i} \leq 2p$. The
right inequality implies $n -3p \leq \sum_{i=1}^p \I{n-2p >
  \rem{a}_i}$ so when $j = 4p-n$, we have $p-j \leq \sum_{i=1}^p
\I{2p-j > \rem{a}_i}$.  That is, for each integer $j$ in
$\{1,2,\ldots,p-1\}$, at most $j$ of the $\rem{a}_i$s satisfy
$\rem{a}_i \ge 2p-j$. Therefore, this condition is necessary for
\eref{ceilingequality} to be satisfied for $n$ in the range
$2p<n\pmod{4p}<4p$.  In addition, $\rem{a}_i$ is a remainder modulo
$2p$, so the condition also holds for $j=0$, proving the necessity of
the second part of condition $(1)$ of the theorem.

If we substitute $m = n-2p$ into $h_2(n) = \cln{n-2p}{4p} -d$ we can
utilize the above argument to prove condition $(2)$ of \tref{ceiling}.

We have established the necessity of (1), (2) and (3). We now show sufficiency. Our strategy is simple: we will reverse our arguments to show that conditions (1),(2), and (3) imply that $h_1(n)$ and $h_2(n)$ are $\cln{n}{4p}+d$ and $\cln{n-2p}{4p} - d$. Assume that all three conditions hold. Without loss of generality, assume that $-s + \sum_{i=1}^{p}{\quo{a_i}} = 2pd$. If not, switch the first and second summands in the recursion.

As shown previously, expanding the definition of $h_1(n)$ gives \eref{numh1}, which for convenience we rewrite below:
\[
h_1(n) = \cln{ p\quo{n} + \rem{n} - s + \sum_{i=1}^p \quo{a}_i- \sum_{i=1}^p\I{\rem{n}>\rem{a}_i} }{2p}.
\]

Substituting $2pd$ for $-s+\sum_{i=1}^{p}{\quo{a_i}}$, then factoring out $d$ and bringing $d$ out of the ceiling function we get $h_1(n) = d+\cln{ p\quo{n} + \rem{n} - \sum_{i=1}^p\I{\rem{n}>\rem{a}_i} }{2p}$. A similar equation holds for $h_2(n)$. Thus, if we can show that $\cln{ p\quo{n} + \rem{n} - \sum_{i=1}^p\I{\rem{n}>\rem{a}_i} }{2p} = \cln{n}{4p}$, with a corresponding equality for $h_2$, we will be done. We show the details only for the required equality above for $h_1$; the approach in the second case is entirely similar.

First, note that if the desired equality above for $h_1$ holds for $n$, it also holds for $n+4p$. This is because $\cln{ p\quo{(n+4p)} + \rem{n} - \sum_{i=1}^p\I{\rem{n}>\rem{a}_i} }{2p} = \cln{ p\quo{n}+2p + \rem{n} - \sum_{i=1}^p\I{\rem{n}>\rem{a}_i} }{2p} = 1+\cln{ p\quo{n} + \rem{n} - \sum_{i=1}^p\I{\rem{n}>\rem{a}_i} }{2p} = 1+\cln{n}{4p} = \cln{n+4p}{4p}$. Thus, without loss of generality we consider only $n$ with $1 \leq n \leq 4p$.

For all such $n$, $\cln{n}{4p} = 1$, so we need only prove that for $1 \leq n \leq 4p$, $\cln{ p\quo{n} + \rem{n} - \sum_{i=1}^p\I{\rem{n}>\rem{a}_i} }{2p} = 1$. This is equivalent to proving that for $1 \leq n \leq 4p$, $1 \leq p\quo{n} + \rem{n} - \sum_{i=1}^p\I{\rem{n}>\rem{a}_i} \leq 2p$. We consider three cases.

Case 1: $1 \leq n < 2p$. In this case $\quo{n} = 0, \rem{n} = n$. We want $1 \leq n - \sum_{i=1}^p\I{n>\rem{a}_i} \leq 2p$. Since $\I{n>\rem{a}_i} = 1-\I{n\leq\rem{a}_i}$, the prior inequality is equivalent to $1 \leq n - \sum_{i=1}^p(1-\I{n\leq\rem{a}_i}) \leq 2p$, or $1 \leq n-p+\sum_{i=1}^p\I{n\leq\rem{a}_i} \leq 2p$, which we can rearrange as $1+p-n \leq \sum_{i=1}^p\I{n\leq\rem{a}_i} \leq 3p-n$. Because $n < 2p$, the right-hand inequality is always true, and clearly the left-hand inequality can only possibly be false for $n \leq p$. By condition (1), at most $n-1$ of the $\rem{a_i}$ satisfy $\rem{a_i} \leq n-1$, which is equivalent to $\rem{a_i} < n$. Therefore, at least $p-(n-1)$ of the $\rem{a_i}$ satisfy the negation, $n \leq \rem{a_i}$, so $1+p-n \leq \sum_{i=1}^p(\I{n\leq\rem{a}_i})$.

Case 2: $n = 2p, 4p$. In this case, $\quo{n} = 1$ or $2$, and $\rem{n} = 0$, which makes $p\quo{n} + \rem{n} - \sum_{i=1}^p\I{\rem{n}>\rem{a}_i} = p$ or $2p$, satisfying the required inequality.

Case 3: $2p < n < 4p$. In this case, $\quo{n} = 1$ and $\rem{n} = n-2p$. Thus, the required inequality is $1 \leq p + n-2p - \sum_{i=1}^p\I{n-2p>\rem{a}_i} \leq 2p$. The left inequality is always true since $n > 2p$. So we need only show that  $n-p-\sum_{i=1}^p\I{n-2p>\rem{a}_i} \leq 2p$, which we may rewrite into $n-3p \leq \sum_{i=1}^p\I{n-2p>\rem{a}_i}$. Observe that $\I{n-2p>\rem{a}_i} = 1 - \I{n-2p \leq \rem{a}_i}$, so the required inequality becomes $n-3p \leq \sum_{i=1}^p(1 - \I{n-2p \leq \rem{a}_i})$, or $4p-n \geq \sum_{i=1}^p\I{n-2p \leq \rem{a}_i}$. If $n \leq 3p$, this inequality is obviously true, so we need only consider $3p < n < 4p$, in which case $0 \leq 4p-n < p$. By condition (1), there are at most $4p-n$ values of $i$ with $\rem{a}_i \geq 2p-(4p-n) = n-2p$, which proves the desired inequality.
\qed

\tref{ceiling} gives a complete characterization of 2-ary
meta-Fibonacci recurrences that are formally satisfied by the sequence
$\cln{n}{\alpha}$.\footnote{Note that \tref{ceiling} actually shows formal satisfaction even when we do not require positivity of the parameters.}  As discussed above, this is not the same as generating this sequence as the solution sequence from the recurrence, since in theory the recurrence relation might refer to future terms. The following corollary rules out this possibility:

\begin{corollary}\label{cor:produceceiling}
 Suppose the $2$-ary, order $p$ meta-Fibonacci recurrence relation
  \begin{align*}
    H(n) = H\left(n-s - \sum_{i=1}^p H(n-a_i) \right) + H\left(n-t -
      \sum_{i=1}^p H(n-b_i) \right)
  \end{align*}
  has parameters satisfying
  \begin{enumerate}
  \item For each integer $j$ in $\{0,1,\ldots,p-1\}$, at most $j$ of
    the $\rem{a}_i$s satisfy $\rem{a}_i \le j$ and at most $j$ of them satisfy
    $\rem{a}_i \ge 2p - j$.
  \item For each integer $j$ in $\{0,1,\ldots,p-1\}$, at most $j$ of
    the $\rem{b}_i$s satisfy $\rem{b}_i \le j$ and at most $j$ of them satisfy
    $\rem{b}_i \ge 2p - j$.
  \item There exists an integer $d$ such that either $-s + \sum_{i=1}^p
    \quo{a}_i = 2pd$ and $-t + \sum_{i=1}^p \quo{b}_i = -2pd -p$, or $-s +
    \sum_{i=1}^p \quo{a}_i = -2pd-p$ and $-t + \sum_{i=1}^p \quo{b}_i = 2pd$.
  \end{enumerate}
Furthermore, define initial conditions $H(v) = \cln{v}{2p}$ for $1\leq v \leq c$ where $c = \max\{2p+2s, 2p+2t, a_1, \ldots, a_p, b_1, \ldots, b_p\}$. Then $\cln{n}{2p}$ is the unique solution sequence generated by the recursion $H(n)$. 

\end{corollary}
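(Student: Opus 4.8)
The plan is to lean on \tref{ceiling}, which already supplies the formal identity $\cln{n}{2p}=\cln{n-s-\sum_i\cln{n-a_i}{2p}}{2p}+\cln{n-t-\sum_i\cln{n-b_i}{2p}}{2p}$ for \emph{every} integer $n$ once conditions (1)--(3) hold, and to add the one ingredient that formal satisfaction alone does not give: that unwinding the recurrence from the prescribed initial data never produces a forward reference or a nonpositive argument. With both facts in hand, a strong induction on $n$ simultaneously forces $H(n)=\cln{n}{2p}$ and shows this is the unique sequence compatible with the recurrence and the initial conditions.

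Concretely, I would induct on $n$. The base case $1\le n\le c$ is the hypothesis on the initial conditions. For $n>c$, write $\kappa_1(n)=n-s-\sum_{i=1}^p H(n-a_i)$ and $\kappa_2(n)=n-t-\sum_{i=1}^p H(n-b_i)$. Since $n>c\ge a_i,b_i\ge 1$, each of $n-a_i$ and $n-b_i$ lies in $[1,n-1]$, so the induction hypothesis (or the base case) gives $H(n-a_i)=\cln{n-a_i}{2p}$ and $H(n-b_i)=\cln{n-b_i}{2p}$.

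The only real computation is to check $0<\kappa_1(n)<n$ and $0<\kappa_2(n)<n$ for the chosen $c=\max\{2p+2s,2p+2t,a_1,\ldots,a_p,b_1,\ldots,b_p\}$. For the upper bound, $\sum_{i=1}^p\cln{n-a_i}{2p}\ge p\ge 1$ and $s\ge 0$, so $\kappa_1(n)\le n-1<n$. For the lower bound, $\cln{n-a_i}{2p}\le\frac{n-a_i+2p-1}{2p}\le\frac{n+2p-2}{2p}$ (using $a_i\ge 1$), hence $\sum_{i=1}^p\cln{n-a_i}{2p}\le\frac{n+2p-2}{2}$ and
\[
\kappa_1(n)\;\ge\; n-s-\frac{n+2p-2}{2}\;=\;\frac{n-2s-2p+2}{2}\;>\;0,
\]
the strict inequality holding because $n>c\ge 2p+2s$. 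The same estimates with $t,b_i$ and $c\ge 2p+2t$ handle $\kappa_2(n)$. Therefore $\kappa_1(n),\kappa_2(n)\in[1,n-1]$, so $H(\kappa_1(n))=\cln{\kappa_1(n)}{2p}$ and $H(\kappa_2(n))=\cln{\kappa_2(n)}{2p}$ are already determined, the recurrence makes no forward reference, and
\[
H(n)=H(\kappa_1(n))+H(\kappa_2(n))=\cln{\kappa_1(n)}{2p}+\cln{\kappa_2(n)}{2p}=\cln{n}{2p},
\]
the last equality being exactly the formal-satisfaction identity of \tref{ceiling}. This closes the induction, and uniqueness follows because for each $n>c$ the value $H(n)$ is forced by strictly earlier values.

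I do not anticipate a genuine obstacle: the whole content is the bookkeeping that $c$ is chosen large enough, i.e.\ the two inequalities $0<\kappa_i(n)<n$, together with the observation that every argument appearing ($n-a_i$, $n-b_i$, $\kappa_1(n)$, $\kappa_2(n)$) lands strictly between $0$ and $n$ so that the induction hypothesis and \tref{ceiling} both apply. (Implicitly this uses the standing assumption $a_i,b_i\ge 1$ and $s,t\ge 0$; the failure of exactly this positivity is what makes the $\SEQ{-1}{-1}{2}{3}$ example in the preceding discussion reference a future term.)
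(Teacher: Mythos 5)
Your proposal is correct and takes essentially the same route as the paper's proof: strong induction on $n$ with the prescribed initial conditions as base case, verification that $0<n-s-\sum_{i=1}^p\cln{n-a_i}{2p}<n$ (and likewise for the second summand), which the choice $c\ge\max\{2p+2s,2p+2t,a_i,b_i\}$ guarantees, so that the induction hypothesis applies to all arguments and the formal-satisfaction identity of \tref{ceiling} yields $H(n)=\cln{n}{2p}$. The only difference is cosmetic bookkeeping in the lower-bound estimate (you bound $\cln{n-a_i}{2p}$ by $\frac{n+2p-2}{2p}$, the paper by $\cln{n}{2p}<\frac{n}{2p}+1$), and both give the needed positivity.
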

\begin{proof}
We proceed inductively. The initial conditions cover the base case. Suppose $H(n) = \cln{n}{2p}$ for all $n$ with $0<n<N$, where $N>c$ (since we have $c$ initial conditions). We will show $H(N)$ is well defined by $H(N) = H\left(N-s - \sum_{i=1}^p H(N-a_i) \right) + H\left(N-t -\sum_{i=1}^p H(N-b_i) \right)$ and equals $\cln{N}{2p}$. Since $0<N-a_i<N$ and $0<N-b_i<N$ for all $i$, we may substitute to get $H\left(N-s - \sum_{i=1}^p H(N-a_i) \right) + H\left(N-t -\sum_{i=1}^p H(N-b_i) \right) = H\left(N-s - \sum_{i=1}^p \cln{N-a_i}{2p} \right) + H\left(N-t -\sum_{i=1}^p \cln{N-b_i}{2p} \right)$. 

Now note that $N-s - \sum_{i=1}^p \cln{N-a_i}{2p} < N$, since $s$ is nonnegative and each $\cln{N-a_i}{2p} \geq 1$. Further $N-s - \sum_{i=1}^p \cln{N-a_i}{2p} > N-s-p\cln{N}{2p} > N-s-p(\frac{N}{2p}+1) = N - s -\frac{N}{2} - p = \frac{N}{2} - s - p$, and by assumption $N>2s+2p$, so $N-s - \sum_{i=1}^p \cln{N-a_i}{2p} > 0$. Therefore, by our inductive assumption, $H\left(N-s - \sum_{i=1}^p \cln{N-a_i}{2p} \right) = \cln{N-s - \sum_{i=1}^p \cln{N-a_i}{2p}}{2p}$. A similar argument holds for the other summand, so we have that $H(N) = \cln{N-s - \sum_{i=1}^p \cln{N-a_i}{2p}}{2p} + \cln{N-t - \sum_{i=1}^p \cln{N-b_i}{2p}}{2p}$. But by Theorem $\ref{thm:ceiling}$, we have that $\cln{n}{2p}$ formally satisfies $H(n)$, therefore $\cln{N-s - \sum_{i=1}^p \cln{N-a_i}{2p}}{2p} + \cln{N-t - \sum_{i=1}^p \cln{N-b_i}{2p}}{2p} = \cln{N}{2p} = H(N)$.
\end{proof}

We now present two special cases of \tref{ceiling} for $p=1$ and $p=2$, the first of which was
given as \cref{ceiling1}.  If $p=1$ (writing $a_1=a$ and $b_1=b$) then conditions (1) and (2) of \tref{ceiling}
say that $a$ and $b$ are odd while condition (3) says, up to the usual symmetry
in the parameters, that for some integer $d$, $\flr{a}{2} - s = 2d$ and $\flr{b}{2}-t =
-2d-1$.  If we add the equations in (3) and multiply by $2$ we get $2\flr{a}{2} - 2s + 2\flr{b}{2} - 2t = -2$, from which we derive the result promised in Corollary \ref{cor:ceiling1}, namely, $2(s+t) = a + b$, with $a$ and $b$ both
odd.

If $p=2$ then the conditions of \tref{ceiling} can be simplified as follows:

\begin{corollary}
  Let $H(n) = H(n-s-H(n-a)-H(n-b)) + H(n-t-H(n-c)-H(n-d))$ be an order 2
 nested recurrence relation.  The sequence $\cln{n}{4}$ is a
  solution to $H(n)$ if and only if there is an odd integer $\kappa$ such
  that the following conditions are satisfied:
\begin{enumerate}
\item[(i)] $a+b \in \set{ 4(s + \kappa)-1, 4(s + \kappa), 4(s + \kappa)+1 }$,
\item[(ii)] $c+d \in \set{ 4(t - \kappa)-1, 4(t - \kappa), 4(t - \kappa)+1 }$,
\item[(iii)] $a,b,c,d \not\equiv 0 \bmod{4}$.
\end{enumerate}
\end{corollary}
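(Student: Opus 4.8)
The plan is to derive this corollary directly from \tref{ceiling} — together with \cref{produceceiling} if one wants the assertion at the level of the generated solution sequence rather than just formal satisfaction — by specialising to $p=2$ and rewriting conditions (1)--(3) there in the more compact form (i)--(iii). Throughout I would let $\quo{z}$ and $\rem{z}$ be the quotient and remainder of $z$ modulo $4$, so $z=4\quo{z}+\rem{z}$ with $0\le\rem{z}<4$; and to avoid the collision with the recurrence parameter $d$, I would call the integer named $d$ in condition (3) of \tref{ceiling} by the name $e$ instead.

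The first step is to unwind conditions (1)--(3) with $p=2$. In condition (1) the index $j$ runs over $\{0,1\}$: the case $j=0$ says $\rem{a},\rem{b}\neq 0$ (equivalently $a,b\not\equiv 0\bmod 4$), and the case $j=1$ says that at most one of $\rem{a},\rem{b}$ equals $1$ and at most one equals $3$. Hence condition (1) is equivalent to the two requirements $\rem{a},\rem{b}\in\{1,2,3\}$ and $\rem{a}+\rem{b}\in\{3,4,5\}$ — the forbidden sums $2$ and $6$ being exactly the cases $\rem{a}=\rem{b}=1$ and $\rem{a}=\rem{b}=3$. The identical rewriting turns condition (2) into $\rem{c},\rem{d}\in\{1,2,3\}$ and $\rem{c}+\rem{d}\in\{3,4,5\}$. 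Condition (3) with $p=2$ states that for some integer $e$, either $\quo{a}+\quo{b}=s+4e$ and $\quo{c}+\quo{d}=t-4e-2$, or $\quo{a}+\quo{b}=s-4e-2$ and $\quo{c}+\quo{d}=t+4e$.

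The heart of the argument is then the translation between the $(\quo{\cdot},\rem{\cdot})$ data and the sums $a+b$, $c+d$. Writing $a+b=4(\quo{a}+\quo{b})+(\rem{a}+\rem{b})$ and putting $\rem{a}+\rem{b}=4+\epsilon$ with $\epsilon\in\{-1,0,1\}$ (legitimate by the unwound condition (1)), in the first alternative of condition (3) one gets $a+b=4s+16e+4+\epsilon=4(s+\kappa)+\epsilon$ with $\kappa:=4e+1$, which is exactly condition (i), and $\kappa$ is odd. The parallel computation on $c+d$ in the same alternative gives $c+d=4(t-4e-2)+(\rem{c}+\rem{d})=4(t-\kappa)+\epsilon'$ with $\epsilon'\in\{-1,0,1\}$, i.e. condition (ii), with the \emph{same} $\kappa$; condition (iii) is just the $j=0$ parts of (1) and (2). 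The second alternative of condition (3) is handled in the same way and yields $\kappa:=-4e-1$, again odd; since $\{4e+1\}$ and $\{-4e-1\}$, as $e$ ranges over the integers, are precisely the two residue classes $1$ and $3$ modulo $4$, their union is all odd integers, so (1)--(3) $\Rightarrow$ (i)--(iii) with the extra constraint on $\kappa$ being automatically satisfied. For the converse I would reverse each step: (iii) recovers $\rem{a},\rem{b},\rem{c},\rem{d}\neq 0$; reading $a+b\equiv\epsilon\pmod 4$ off of (i) and combining with (iii) forces $\rem{a}+\rem{b}\in\{3,4,5\}$ (ruling out the residue $2$), hence the full condition (1), and symmetrically (2); and writing $\kappa=4e+1$ or $\kappa=-4e-1$ according to the residue of the odd integer $\kappa$ modulo $4$, the divisibility by $4$ of $4(\quo{a}+\quo{b})=(a+b)-(\rem{a}+\rem{b})$ pins down $\quo{a}+\quo{b}$ and likewise $\quo{c}+\quo{d}$ exactly as in the matching alternative of condition (3). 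Invoking \tref{ceiling} (and \cref{produceceiling}) then closes both directions.

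I do not expect any genuine obstacle: the whole proof is bookkeeping modulo $4$. The one point that needs care is that the ``$j=1$'' clauses of conditions (1) and (2) are \emph{not} visible in (i)--(iii); they are absorbed into conditions (i) and (ii), since once $a,b\not\equiv 0\bmod 4$ the statement that $a+b$ lies within $1$ of a multiple of $4$ is precisely $\rem{a}+\rem{b}\notin\{2,6\}$. The only other thing to keep straight is the dictionary between the two alternatives of condition (3) and the two classes of $\kappa$ modulo $4$; once it is written down, matching the signs $\epsilon,\epsilon'$ in (i) and (ii) is immediate.
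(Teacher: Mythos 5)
Your proposal is correct and follows essentially the same route as the paper: specialize \tref{ceiling} to $p=2$, observe that the $j=0$ clauses give (iii) and the $j=1$ clauses amount to $\rem{a}+\rem{b},\rem{c}+\rem{d}\in\{3,4,5\}$, and then convert condition (3) into (i)--(ii) via $a+b=4(\quo{a}+\quo{b})+\rem{a}+\rem{b}$ with $\kappa=4e+1$ (resp.\ $-4e-1$, which the paper writes as $4e+3$), reversing the bookkeeping for the converse. The only cosmetic differences are your renaming of the theorem's integer $d$ and your explicit remark about \cref{produceceiling}, neither of which changes the argument.
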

\begin{proof}
  Suppose $\cln{n}{4}$ is a solution to $H(n)$.  We apply
  \tref{ceiling} with $p = 2$.  Condition (3) of \tref{ceiling} says,
  up to switching the roles of $\set{a,b,s}$ and $\set{c,d,t}$, that
  $\lfloor \frac{a}{4} \rfloor + \lfloor \frac{b}{4} \rfloor = s+4e$ and $\lfloor \frac{c}{4} \rfloor +\lfloor \frac{d}{4} \rfloor =
  t-4e-2$.  Multiplying both sides by $4$ and adding $\rem{a}+\rem{b}$
  gives $a+b = 4(s+4e) + \rem{a} + \rem{b}$. Similarly, we have $c+d = 4(t-4e-2) + \rem{c} + \rem{d}$.  

First, note that using conditions (1) and (2) of \tref{ceiling} with $j=0$ establishes (iii), since it shows that none of $\rem{a},\rem{b},\rem{c},\rem{d}$ can be 0. Using conditions (1) and (2) of \tref{ceiling} with $j=1$ shows that $\rem{a}$ and $\rem{b}$ cannot both be 1 and cannot both be 3, and the same for $\rem{c}$ and $\rem{d}$. Combining these two facts, $\rem{a}+\rem{b} \in \set{3,4,5}$ and similarly $\rem{c}+\rem{d} \in \set{3,4,5}$. 

So $a+b = 4(s+4e+1) + \rem{a}  +\rem{b} - 4$, and $c+d = 4(t-4e-2+1) +\rem{c} + \rem{d}-4$.  Since  $4e+1 = -(-4e-2+1)$, we may set $\kappa = 4e+1$.

  The argument can be reversed so that (1), (2) and (3) follow from
  (i), (ii) and (iii).  That is, assume $\kappa$ is an odd integer.  If $\kappa
  = 4e+1$ for some integer $e$, then by (i), we have $a+b = 4(s+4e) + E$
  where $E\in\set{3,4,5}$.  Subtracting $\rem{a} + \rem{b}$ and
  dividing by $4$ gives $\lfloor \frac{a}{4} \rfloor + \lfloor \frac{b}{4} \rfloor = s+4e +
  \frac{E-\rem{a}-\rem{b}}{4}$.  But the left hand side is an integer,
  so $E-\rem{a}-\rem{b} \pmod{4} = 0$. Since $a,b \not\equiv 0 \bmod{4}$, we have $1 \leq \rem{a} \leq 3$, and $1 \leq \rem{b} \leq 3$, so $2\leq \rem{a}+\rem{b}\le
  6$. Therefore, we must have $E= \rem{a} + \rem{b}$.  This forbids
  $\rem{a}=\rem{b} = 1$ and $\rem{a}=\rem{b} = 3$, which shows condition (1) for $j=1$.  Combining this
  with condition (iii) which shows condition (1) for $j=0$, we establish condition (1). A similar argument yields condition (2). Furthermore, since we showed $E= \rem{a}+\rem{b}$, and  $\lfloor \frac{a}{4} \rfloor + \lfloor \frac{b}{4} \rfloor = s+4e +
  \frac{E-\rem{a}-\rem{b}}{4}$, we have $\lfloor \frac{a}{4} \rfloor + \lfloor \frac{b}{4} \rfloor = s+4e$, establishing the first part of condition (3); a similar argument on $c$ and $d$ yields the second part of condition (3). By \tref{ceiling}, therefore, $\cln{n}{4}$ is a solution to $H(n)$. 

A similar argument holds if $\kappa = 4e+3$, where we begin by invoking condition (ii) to get $c+d=4(t-4e-3) + E-4$ and rearrange to $c+d=4(t-4(e+1))+E$, then proceed as above. 
\end{proof}

\section{Concluding Remarks} \label{sec:Conc}

We have an almost complete characterization of 2-ary order 1 Conolly-like recursions. The only missing component is a proof that $\SEQ{0}{1}{1}{2}$ and $\SEQ{0}{2}{3}{5}$ are the only two order 1 recursions satisfied by the Conolly sequence. To show this, we would like an analogue of Corollary \ref{cor:ceiling1} for $(0,1)$-Conolly recursions.

For 2-ary recursions of higher order we have shown that $(\alpha, \beta)$-Conolly recursions exist for all permissible pairs $(\alpha, \beta)$. Unlike the situation for order 1, we believe that for $\beta>0$ there are many $(\alpha, \beta)$-Conolly recursions. To expand on the existence result proved in Theorem \ref{thm:alphabeta} we would like an analogue to Theorem \ref{thm:ceiling}; a starting point would be to find a way to show that all of the 180 recursions listed in Conjecture \ref{conj:order2} are indeed Conolly-like.

Nothing is currently known about the existence of Conolly-like recursions with arity higher than 2. An empirical investigation along the lines described in \sref{Exp} likely would be a useful starting point.

Another direction for further work is that \tref{ceiling} seems
adaptable to $k$-ary recurrence relations of order $p$ and arbitrary
$\alpha$.  A sequence $A(n)$ that satisfies $\cln{n}{\alpha}$ also
satisfies the limit $\frac{A(n)}{n} \longrightarrow \frac{1}{\alpha}$.
By \tref{ordlimit}, $\alpha = \frac{kp}{k-1}$ and since
$\gcd(k,k-1)=1$, we must have $p=q(k-1)$ for some $q$.  The function
$\cln{n}{kq}$ can be expressed as a sum of $k$ ceiling functions,
analogous to
\begin{align*}
\cln{n}{2p} = \cln{n}{4p} + \cln{n-2p}{4p},
\end{align*}
using repeated applications of Exercise 35 of
Section 1.2.4 in \cite{knuthvol1}, and the appropriate substitution into Equation 3.24 of
\cite{GKP}.  The result is 
\begin{align}\label{eq:ceil}
  \cln{n}{kq} = \cln{n}{k^2q} + \cln{n-kq}{k^2q} + \cln{n-2kq}{k^2q} +
  \cdots +\cln{n-(k-1)kq}{k^2q},
\end{align}
and following the pattern of \tref{ceiling}, one would want to use the
fact that to show that the ith term on the right hand side corresponds
to the ith term in the sum $H(n) = \sum_{i=1}^k h_i(n)$ for some
$k$-ary recurrence relation $H$ of order $p$.  In this case one would
define $\quo{m}$ and $\rem{m}$ to satisfy $m=kq\quo{m} + \rem{m}$ for
$0\le\rem{m}<kq$ so as to take advantage of the Iversonians used in
\tref{ceiling}.  Particular attention should be paid to showing that
the form in Equation (\ref{eq:ceil}) is necessary, given $k$ and $q$.

Further consideration might be given to sequences of the form
$\cln{rn}{q}$ with positive integers $r$ and $q$ such that $0<r/q\le
1$ and $\gcd(r,q) =1$.  These are only Conolly-like for $r=1$, but
nevertheless they have some interesting features.  For example, this
is a necessary and sufficient condition for a slow-growing sequence to
have a periodic frequency function.  These sequences will be the subject of a forthcoming publication.

Finally, it would be interesting to apply the tree technique to help identify recursions with solutions whose frequency functions are linear combinations of functions other than 1 and $r_m$. This will be the subject of a forthcoming publication.

\end{document}